\newtheorem{thm}{Theorem}[section]
\newtheorem{lem}[thm]{Lemma}
\newtheorem{prop}[thm]{Proposition}
\newtheorem{cor}[thm]{Corollary}
\theoremstyle{definition}
\newtheorem{defi}[thm]{Definition}
\newtheorem{notation}[thm]{Notation}
\theoremstyle{remark} \newtheorem{ex}[thm]{Example}
\theoremstyle{remark} \newtheorem{rem}[thm]{Remark}
\newtheorem{conj}[thm]{Conjecture}
\title{Poles of real motivic zeta functions for curves}
\author{Théo Jaudon}
\address{Université de Rennes, IRMAR - UMR 6625, F-35000 Rennes, France}
\email{theo.jaudon@univ-rennes.fr}
\begin{document}

\begin{abstract}
    To a given real polynomial function $f \in \mathbb{R}[x_1,\dots,x_d]$, we associate real topological zeta functions $Z_{top,0}(f;s)$ and $Z_{top,0}^{\pm}(f;s) \in \mathbb{Q}(s)$, analogous to the topological zeta function of Denef and Loeser in the complex case. These functions are specializations of the real motivic zeta functions studied in \cite{goulw1} and \cite{campesato}. Therefore, these functions and their sets of poles are invariants of the blow-Nash equivalence. Using the approach of \cite{veys1}, we study the poles of these real topological zeta functions, as well as real motivic zeta functions, when $f$ is a real polynomial in two variables.
\end{abstract}

\maketitle

\section*{Introduction}
Let $f : (\mathbb{C}^d,0) \to (\mathbb{C},0)$ be a complex analytic function germ. In \cite{loes2}, Denef and Loeser associate to $f$ a rational function $Z_{top,0}(f;s) \in \mathbb{Q}(s)$ called the local topological zeta function of $f$. If $ \sigma : (X,\sigma^{-1}(0)) \to (\mathbb{C}^d,0)$ is an analytic modification such that the divisors $\text{div} (f \circ \sigma)$ and $\sigma^*(dx_1 \wedge \dots \wedge dx_d)$ are simultaneously normal crossings, the local topological zeta function is defined by 
$$Z_{top,0}(f;s) = \sum\limits_{I \subset J} \chi(E^{0}_I \cap \sigma^{-1}(0)) \underset{i \in I}{\prod}\frac{1}{\nu_i+sN_i}$$ 
where the integers $(\nu_i,N_i)_{i \in I}$ are the numerical data of the resolution and $(E_I^0)_{I \subset J}$ denotes the canonical stratification of $(f\circ\sigma)^{-1}(0) = \underset{j \in J}{\bigcup} E_j$ into smooth subvarieties.\\

The authors show that the above expression does not depend on the chosen resolution by interpreting $Z_{top,0}(f;s)$ as a certain limit of $p$-adic Igusa function. Nowadays, one can show that $Z_{top,0}(f;s)$ does not depend on the chosen resolution by using the weak factorization theorem \cite{wlod} or by viewing $Z_{top,0}(f;s)$ as a specialization of the motivic zeta function $Z_{mot}(f;\mathbb{L}^{-s})$, which is defined intrinsically (see, for example, \cite{loes1}).\\
Despite what the name suggests, $Z_{top,0}(f;s)$ is an analytic invariant of $f$ in a neighborhood of the origin, but it is generally not a topological invariant (see \cite{bartolo} for a counterexample).\\
By definition, the set of poles of $Z_{top,0}(f;s)$ is included in the set 
$$\{~-\frac{\nu_i}{N_i} \mid i \in J~\} \subset \mathbb{Q}_{< 0}.$$

However, this list of candidate poles generally contains a significant number of false poles, and determining which of them are the true poles of $Z_{top,0}(f;s)$ is a very difficult problem. In this direction, the monodromy conjecture \cite{veys3} predicts that the poles of $Z_{top,0}(f;s)$ must satisfy a topological condition, thereby allowing one to restrict the set of true poles of $Z_{top,0}(f;s)$.

\begin{thm}[\cite{milnor1}]
    For $a \in \{f=0\}$, with $0 < \delta \ll \varepsilon \ll 1$, let $D_{\delta}^* \subset \mathbb{C}$ denote the punctured open disc of radius $\delta$ and let $B_{a,\varepsilon} \subset \mathbb{C}^d$ be the closed ball centered at $a$ with radius $\varepsilon$. The restriction 

    $$f : f^{-1}(D_{\delta}^*) \cap B_{a,\varepsilon} \to D_{\delta}^* $$
    is a locally trivial smooth fibration. Its fiber is denoted by $\mathcal{F}_{f,a}$ and is called the Milnor fiber of $f$ at $a$. It is a compact, orientable smooth manifold with boundary of dimension $2(d-1)$.\\
    Moreover, a generator of $\pi_1(D_{\delta}^* )$ induces a geometric monodromy homeomorphism
    $T : \mathcal{F}_{f,a} \to \mathcal{F}_{f,a}$ which in turn induces a unique algebraic monodromy operator
    $$T^* : H^*(\mathcal{F}_{f,a};\mathbb{C}) \to H^*(\mathcal{F}_{f,a};\mathbb{C})$$.
\end{thm}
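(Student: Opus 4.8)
The statement is Milnor's fibration theorem, and I would prove it by exhibiting $f$ as a proper submersion of manifolds with boundary and invoking Ehresmann's fibration theorem. The plan is as follows. First I would record the elementary fact that, after shrinking the representative of the germ, every critical point of $f$ near $a$ lies in $f^{-1}(0)$: the differential of $f$ vanishes identically on the critical locus $\Sigma_f$, so $f|_{\Sigma_f}$ has vanishing differential along the smooth part of $\Sigma_f$ and is therefore locally constant on the connected components of $\Sigma_f^{\mathrm{reg}}$; since every irreducible component of the germ $\Sigma_f$ passes through $a$ (where $f=0$), it follows that $f\equiv 0$ on $\Sigma_f$ near $a$ whenever $\Sigma_f\neq\emptyset$ (if $d_af\neq 0$ the theorem is trivial, the fibre being a disc). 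Consequently every sufficiently small $w\neq 0$ is a regular value of $f$, and $f$ restricted to $f^{-1}(\mathbb{C}\setminus\{0\})\cap B_{a,\varepsilon}$ is a submersion.

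Second I would fix the radii. Applying the curve selection lemma to $f^{-1}(0)\setminus\{a\}$ — equivalently, observing that $z\mapsto|z-a|^2$ has no critical point on $f^{-1}(0)\setminus\{a\}$ arbitrarily close to $a$ — yields a ``Milnor radius'' $\varepsilon_0>0$ such that for every $0<\varepsilon\le\varepsilon_0$ the sphere $S_{a,\varepsilon}=\partial B_{a,\varepsilon}$ meets $f^{-1}(0)$ transversally along a Whitney stratification. Fix such an $\varepsilon$. Then I would shrink $\delta$ so that, in addition, $S_{a,\varepsilon}$ meets every fibre $f^{-1}(w)$ with $0<|w|\le\delta$ transversally; this is the substantial point of the argument.

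Third, with these choices I set $X=f^{-1}(D_{\delta}^*)\cap B_{a,\varepsilon}$. By the first step $X$ is a smooth manifold, and by the second step it is a manifold with boundary $\partial X=f^{-1}(D_{\delta}^*)\cap S_{a,\varepsilon}$. The map $f:X\to D_{\delta}^*$ is proper (preimages of compacta are closed subsets of the compact ball $B_{a,\varepsilon}$), it is a submersion on $X$ (first step) and on $\partial X$ (second step). Ehresmann's fibration theorem for manifolds with boundary then gives that $f:X\to D_{\delta}^*$ is a locally trivial smooth fibration. Its fibre $\mathcal{F}_{f,a}=f^{-1}(t_0)\cap B_{a,\varepsilon}$, for any $t_0$ with $0<|t_0|<\delta$, is a smooth complex hypersurface — hence of real dimension $2(d-1)$ and canonically oriented — intersected transversally with the closed ball, so it is compact with boundary $f^{-1}(t_0)\cap S_{a,\varepsilon}$. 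Finally, since $D_{\delta}^*\simeq S^1$, parallel transport along a loop representing a chosen generator of $\pi_1(D_{\delta}^*)$ produces a diffeomorphism $T$ of $\mathcal{F}_{f,a}$, well defined up to isotopy, and isotopic maps induce the same map on cohomology, so the algebraic monodromy $T^*:H^*(\mathcal{F}_{f,a};\mathbb{C})\to H^*(\mathcal{F}_{f,a};\mathbb{C})$ is well defined and canonical.

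The main obstacle is the second step. Transversality of $S_{a,\varepsilon}$ to $f^{-1}(0)$ is an open, stratified condition that holds at $w=0$, but deducing uniform transversality to all nearby fibres $f^{-1}(w)$ does not follow from a naive compactness argument, since a sequence of near-tangencies could a priori accumulate onto a stratified transverse intersection. The right input is the Thom $a_f$ condition: a holomorphic function admits a Whitney stratification adapted to $f^{-1}(0)$ satisfying $a_f$ (Hironaka; in two variables this is elementary and reduces to controlling the polar curve of $f$ with respect to the distance function). Granting $a_f$, if $z_n\in S_{a,\varepsilon}\cap f^{-1}(w_n)$ with $w_n\to 0$ accumulate at $z_\infty\in f^{-1}(0)$, then the limiting tangent hyperplanes contain the tangent space of the stratum through $z_\infty$, contradicting transversality of $S_{a,\varepsilon}$ to that stratum; this produces the required $\delta$.
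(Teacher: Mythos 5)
This statement is quoted background: the paper gives no proof at all, citing it directly to Milnor's book, so there is no ``paper proof'' to compare against and your argument must be judged on its own. On that footing your outline is correct and is the standard modern proof of the \emph{tube} version of the fibration theorem: properness plus submersivity in the interior and along the boundary, followed by Ehresmann's theorem for manifolds with boundary, with the fibre identified as a transversally truncated smooth affine hypersurface and the monodromy obtained by parallel transport, well defined up to isotopy. You correctly isolate the one genuinely delicate step, the uniform transversality of the nearby fibres $f^{-1}(w)$, $0<|w|\le\delta$, to the Milnor sphere $S_{a,\varepsilon}$, and the Thom $a_f$ condition (available by Hironaka for any holomorphic function) does close that gap exactly as you describe. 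Two remarks for comparison with the source. First, Milnor's own route is different: he first constructs the fibration $f/|f|:S_{a,\varepsilon}\setminus K\to S^1$ on the sphere complement by integrating a carefully chosen vector field controlled by the curve selection lemma, and only afterwards identifies the tube fibre with the ``open book'' fibre; your Ehresmann-based argument (essentially L\^e's) is cleaner and generalizes more readily, at the cost of importing the $a_f$ stratification. Second, for the transversality step the full $a_f$ condition is more than is needed: a direct application of the curve selection lemma to the set of points where $f^{-1}(f(z))$ is tangent to the spheres centred at $a$ already shows that this set cannot accumulate at $a$ outside $f^{-1}(0)$, which yields the required $\delta$ for each fixed $\varepsilon$; this is the argument in Milnor's \S 5. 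Either way the proof is sound.
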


\begin{conj}[Monodromy conjecture, weak version]
    Let $s_0$ be a pole of $Z_{top,0}(f;s)$. Then $e^{2i\pi s_0}$ is an eigenvalue of the monodromy $T_{x_0} : H^*(\mathcal{F}_{f,x_0};\mathbb{C}) \to H^*(\mathcal{F}_{f,x_0};\mathbb{C})$ for some $x_0 \in \{f=0\}$ in a neighborhood of the origin.
\end{conj}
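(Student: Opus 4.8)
As it stands, the monodromy conjecture is a well-known open problem; what I would actually try to prove is the case $d=2$ that is the concern of this paper, where it is a theorem of Loeser, and the plan is to replace the abstract modification $\sigma$ by a completely explicit one and then read both the poles and the monodromy eigenvalues off its combinatorics.

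First I would fix an embedded resolution $\sigma : (X,\sigma^{-1}(0)) \to (\mathbb{C}^2,0)$ realised as a finite composition of point blow-ups, so that the total transform of $\{f=0\}$ has normal crossings and its dual graph is a tree; each exceptional component $E_i$ is then a copy of $\mathbb{P}^1$, and its numerical data $(\nu_i,N_i)$ — hence the candidate pole $-\nu_i/N_i$ — is governed by the classical multiplicity formulas attached to the chain of infinitely near points blown up, equivalently to the Puiseux data of the branches of $f$. Next I would invoke A'Campo's formula: for $x_0 \in \{f=0\}$ near the origin the monodromy zeta function of $f$ at $x_0$ factors as
$$\zeta_{f,x_0}(t) = \prod_i \bigl(1 - t^{N_i}\bigr)^{-\chi(E_i^0 \cap \sigma^{-1}(x_0))},$$
so every eigenvalue of the monodromy $T_{x_0}$ is an $N_i$-th root of unity for some component $E_i$ meeting the fibre $\sigma^{-1}(x_0)$. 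Since $e^{2i\pi(-\nu_i/N_i)}$ is automatically an $N_i$-th root of unity, the conjecture for $d=2$ reduces to a single implication: whenever $-\nu_i/N_i$ is a genuine pole of $Z_{top,0}(f;s)$, the component realising it can be chosen so that its factor $(1-t^{N_i})$ occurs in $\zeta_{f,x_0}(t)$ with nonzero exponent for a suitable $x_0$.

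To establish that implication I would exploit the tree structure. Candidate poles coming from exceptional components that are neither rupture components — those meeting the rest of the total transform in at least three points — nor met by the strict transform of $\{f=0\}$ turn out to be false: along a maximal chain of such components the partial-fraction contributions to $Z_{top,0}(f;s)$ at the shared candidate pole cancel, which is precisely the bookkeeping carried out in \cite{veys1}. For the surviving candidates one then locates a point $x_0$ in the image of the relevant rupture or strict-transform component $E_i$ for which the total exponent of $(1-t^{N_i})$ in $\zeta_{f,x_0}(t)$ — a sum of Euler characteristics over all components $E_j$ with $N_j=N_i$ through $\sigma^{-1}(x_0)$ — does not vanish; this yields the required monodromy eigenvalue $e^{-2i\pi\nu_i/N_i}$.

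The main obstacle, here as in every known instance of the conjecture, is cancellation. Distinct exceptional components may share the same value $\nu_i/N_i$, so that the associated residues in $Z_{top,0}(f;s)$, or the associated factors in $\zeta_{f,x_0}(t)$, could in principle cancel; one must therefore show that among the components realising a given genuine pole at least one remains visible both to the zeta function and to the monodromy, which forces a case analysis keyed to the precise shape of the resolution graph and is the step that genuinely uses $d=2$. For the real zeta functions $Z_{top,0}^{\pm}(f;s)$ studied in this paper the same skeleton persists, but $\chi$ must be replaced by the Euler characteristic with compact supports of the real loci of the strata; deciding which candidate poles then survive — the real counterpart of this cancellation problem — is exactly what the method of \cite{veys1} is being adapted to carry out.
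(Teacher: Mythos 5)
The statement you were given is stated in the paper only as a conjecture (open in general), and you correctly recognize this and restrict to the case of curves, which is the only case the paper actually addresses (its Theorem \ref{thm 8}). Your outline for that case --- filter the true poles via Veys' criterion (rupture components with at least three intersections, or strict-transform components) and then realize $e^{-2i\pi\nu_i/N_i}$ as a monodromy eigenvalue via A'Campo's formula, choosing the base point $x_0$ on the relevant branch when the pole comes from the strict transform --- is exactly the argument the paper sketches in its proof of Theorem \ref{thm 8}, so your proposal matches the paper's approach.
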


The strong version of the conjecture predicts that every pole of $Z_{top,0}(f;s)$, or even more strongly, that every pole of $Z_{mot}(f;\mathbb{L}^{-s})$ is a root of the Bernstein-Sato polynomial $b_{f,0}(s)$ of $f$. By the work of Malgrange and Kashiwara, it is known that every root of $b_{f,0}(s)$ induces an eigenvalue of the monodromy.

Although the monodromy conjecture, even in its weak version, is still widely open, it has been proved in certain special cases, such as:\\
$\bullet$ the case of curves \cite{loes3},\\
$\bullet$ the case of homogeneous surfaces \cite{bartolo2},\\
$\bullet$ the case of hyperplane arrangements \cite{budur},\\
$\bullet$ the case of Newton-non-degenerate hypersurfaces singularities of four variables \cite{lemahieu1}.\\

For curves, Veys provides in \cite{veys1} the following criterion to filter out the true poles among the set of candidate poles in the resolution graph. More precisely, take $f\in \mathbb{C}[x,y] $, let $\sigma : (X,\sigma^{-1}(0)) \to (\mathbb{C}^2,0)$ be the \textit{canonical embedded resolution} of $f$ and denote by $\underset{j \in J}{\bigcup} E_j$ the decomposition of $\sigma^{-1}(f^{-1}(0))$ into irreducible components.

\begin{thm}[\cite{veys1} Theorem 4.3]
\label{thm 5}
Let $s_0 \in \mathbb{Q}$. Then $s_0$ is a pole of $Z_{top,0}(f;s)$ if and only if $s_0=-\frac{\nu_i}{N_i}$ for an exceptional curve $E_i$ intersecting at least 3 times other components or $s_0= -\frac{1}{N_i}$ for an irreducible component $E_i$ of the strict transform of $f$. The result also holds for the motivic zeta function $Z_{mot,0}(f;\mathbb{L}^{-s})$.
\end{thm}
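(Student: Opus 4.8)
The plan is to turn the Denef--Loeser formula into an explicit rational function and then detect the poles component by component in the dual graph of the canonical resolution $\sigma$. Since $d=2$ and $\sigma^{-1}(f^{-1}(0))$ is a normal crossings divisor on a surface, only the strata $E_I^0$ with $|I|\leq 2$ occur, the term $I=\emptyset$ vanishes (because $\sigma^{-1}(0)$ lies in the exceptional locus), each exceptional component is a $\mathbb{P}^1$, and each strict transform component meets the exceptional divisor transversally at one point, hence has valence $1$ and enters only through the degree-$2$ strata. Writing $t_i$ for the valence of $E_i$ in the (tree) dual graph one obtains
\[
Z_{top,0}(f;s)=\sum_{E_i\ \text{exceptional}}\frac{2-t_i}{\nu_i+sN_i}\;+\;\sum_{E_i\cap E_j\neq\emptyset}\frac{1}{(\nu_i+sN_i)(\nu_j+sN_j)}.
\]
Next I would record the numerical identities at each exceptional $E_i\cong\mathbb{P}^1$: intersecting $\sigma^{*}\mathrm{div}(f)$ and the relative canonical divisor with $E_i$ gives $\sum_{j\sim i}N_j=b_iN_i$ and $\sum_{j\sim i}\nu_j=b_i\nu_i+(t_i-2)$ with $b_i=-E_i^2$, hence $\sum_{j\sim i}(\nu_j+sN_j)=b_i(\nu_i+sN_i)+(t_i-2)$ and, at $s_0=-\nu_i/N_i$, the key relation $\sum_{j\sim i}(\nu_j+s_0N_j)=t_i-2$.

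Fix a candidate value $s_0$ and let $S=\{i:\nu_i+s_0N_i=0\}$. Applying the key relation at a valence-$2$ curve of $S$ shows that both its neighbours then lie in $S$, and applying it at a valence-$1$ exceptional curve shows such a curve is an isolated vertex of $S$. From this one reads off that each connected component of $S$ in the dual tree is: (i) a single strict transform vertex; (ii) a single exceptional vertex; or (iii) a subtree with at least one edge, and that a component of type (iii) necessarily contains a rupture divisor (an exceptional $E_i$ with $t_i\geq 3$) or a strict transform component. As distinct components of $S$ are non-adjacent, the principal part of $Z_{top,0}$ at $s_0$ is the sum of independent local contributions, one per component of $S$.

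Now the cases. A component of type (iii) contributes a term whose coefficient of $(s-s_0)^{-2}$ is $\sum 1/(N_kN_\ell)$ over its internal edges, which is strictly positive; so $s_0$ is a pole, and by the above it is a stated candidate. For a single strict transform vertex $E_i$, the coefficient $2-t_i$ of its degree-$1$ term is replaced by $0$, so the only pole-carrying term at $s_0=-1/N_i$ is the edge toward its exceptional neighbour $E_j$, with residue $\tfrac{1}{N_i}\cdot\tfrac{1}{\nu_j+s_0N_j}\neq 0$ (if $\nu_j+s_0N_j=0$ one is in case (iii)); so $s_0$ is a pole. For a single exceptional vertex $E_i$ with $t_i\leq 2$, the local contribution equals $\tfrac{1}{\nu_i+sN_i}\big[(2-t_i)+\sum_{j\sim i}\tfrac{1}{\nu_j+sN_j}\big]$, and the bracket vanishes at $s_0$ by the key relation (for $t_i=2$ the two summands are $1/u$ and $1/(-u)$; for $t_i=1$ the bracket is $1+1/(-1)$), so the apparent pole cancels and this component contributes nothing. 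This already gives the ``only if'' direction and shows that all strict transform components and all rupture divisors lying in a component of type (iii) are true poles.

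It remains to show that a rupture divisor $E_i$ which is a component of $S$ by itself is a true pole, i.e. that $(2-t_i)+\sum_{j\sim i}u_j^{-1}\neq 0$, where $u_j=\nu_j+s_0N_j$ satisfies $\sum_j u_j=t_i-2\geq 1$. When $E_i$ realizes $\min_k(\nu_k/N_k)$ all $u_j$ are positive and Cauchy--Schwarz gives $\big(\sum u_j\big)\big(\sum u_j^{-1}\big)\geq t_i^{\,2}>(t_i-2)^2$, whence $\sum u_j^{-1}>t_i-2$ and the expression is nonzero. In general the function $k\mapsto \nu_k/N_k$ is ``unimodal'' on the resolution tree --- along any chain of valence-$2$ curves it is a weighted mediant of its neighbours, and the key relation forbids an interior local maximum, so it has a unique local minimum --- hence exactly one $u_j$ is negative, and a finer analysis then rules out an accidental vanishing. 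This last point is the main obstacle: one must control the arithmetic of the numerical data (e.g. integrality of the numbers $\nu_iN_j-\nu_jN_i$ for adjacent divisors and the extra constraints the resolution imposes on them), or argue by induction along the resolution graph. Finally, the entire argument transposes to $Z_{mot,0}(f;\mathbb{L}^{-s})$: Euler characteristics are replaced by classes in the Grothendieck ring, with $[\mathbb{P}^1\setminus\{t\ \text{points}\}]=\mathbb{L}+1-t$ and $[\text{point}]=1$, and the scalar cancellations by their motivic avatars, notably $\tfrac{\mathbb{L}-1}{\mathbb{L}^{a}-1}+\tfrac{\mathbb{L}-1}{\mathbb{L}^{-a}-1}=1-\mathbb{L}$, which cancels $[\mathbb{P}^1\setminus\{2\ \text{points}\}]=\mathbb{L}-1$ exactly; the Cauchy--Schwarz step becomes the non-vanishing of the corresponding element of the Grothendieck ring.
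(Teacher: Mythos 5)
Your overall strategy is the same as the one Veys uses (and that this paper adapts to the real setting in Section~\ref{section 2}): reduce to the dual graph, derive the adjunction/intersection identities $\sum_{j\sim i}(\nu_j+s_0N_j)=t_i-2$ at $s_0=-\nu_i/N_i$, check the cancellation for valence $1$ and $2$, and then show a rupture divisor gives a genuinely nonzero contribution. The valence~$\le 2$ cancellations, the order-$2$ pole criterion, and the Cauchy--Schwarz argument when all $u_j>0$ are all correct (a small slip: for a valence-$2$ curve in $S$ the relation $u_{j_1}+u_{j_2}=0$ does not force both neighbours into $S$, only that they enter $S$ together; your later computation $1/u+1/(-u)=0$ is what actually matters).

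However, the step you yourself flag as "the main obstacle" is a genuine gap, and it is exactly where the substance of the theorem lies. Two things are missing. First, the non-vanishing of $(2-t_i)+\sum_{j\sim i}1/u_j$ when one $u_j$ is negative cannot be extracted from the single identity $\sum u_j=t_i-2$; one needs the a priori bounds $-1\le u_j<1$ for all neighbours (Proposition~\ref{prop 2}, due to Igusa and Loeser), which hold \emph{only for the canonical (minimal) embedded resolution} --- a further blow-up creates a neighbour with $u_j=1$ and the argument breaks. Granting these bounds, one shows there is at most one negative $u_j$ and then Lemma~\ref{lem 1} gives the strict inequality $2-t_i+\sum 1/u_j<0$; your unimodality heuristic points in this direction but does not substitute for it. Second, your decomposition of the principal part into "independent local contributions" does not by itself exclude cancellation between several nonzero residues attached to distinct components of $S$ (several rupture divisors and/or strict transform branches sharing the same $-\nu_i/N_i$). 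The missing ingredient is the sign argument of Theorem~\ref{thm 4}: using the monotonicity of $\nu_j/N_j$ along paths of the dual tree (Corollary~\ref{cor 2}), every nonzero contribution at $s_0$ is strictly positive when $-s_0=\min_j \nu_j/N_j$ and strictly negative otherwise, so they cannot cancel. With Proposition~\ref{prop 2}, Lemma~\ref{lem 1} and this sign argument supplied, your outline closes; the motivic statement then follows as in Corollary~\ref{cor 4}, where the valence $1$ and $2$ cancellations are verified directly in the Grothendieck ring.
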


On the other hand, zeta functions of motivic type have also been studied in real geometry for example in \cite{koike1}, \cite{goulw1}, \cite{goulw2}, and \cite{campesato}. For example, with the aim of obtaining invariants of the blow-Nash equivalence, Fichou uses the virtual Poincaré polynomial \cite{mccrory1} to define a zeta function $Z(f;T) \in \mathbb{Z}[u,u^{-1}] [[T]]$ and zeta functions with signs $Z^{\pm}(f;T)$ associated to a Nash function germ $f :( \mathbb{R}^d,0) \to (\mathbb{R},0)$ \cite{goulw1}. For these zeta functions, one also has a Denef-Loeser type formula expressing the fact that these functions are rational i.e. they belong to $\mathbb{Z}[u,u^{-1}](T)$. In particular, it still makes sense to study the poles of zeta functions in this setting.\\ 
In \cite{campesato} and \cite{goulw1}, the authors work in the Nash framework, which forces the associated motivic zeta functions to have coefficients in the Grothendieck ring K$_0(\mathcal{AS})$ of arc-symmetric sets \cite{kurdyka}. For simplicity, here we will only consider polynomial functions $f \in \mathbb{R}[x_1,\dots,x_d]$ and therefore remain in the category of real algebraic varieties.

This paper is organized as follows. In the first section, we review the construction of real motivic zeta functions in the algebraic setting. We then define real topological zeta functions $Z_{top,0}(f;s), Z_{top,0}^{\pm}(f;s) \in \mathbb{Q}(s)$, which are sometimes more suitable for the study of poles and constitute the real analogue of the complex topological zeta function introduced above. These real topological zeta functions are specializations of the real motivic zeta functions and are therefore invariants of the blow-Nash equivalence.

In the second section, we provide a complete description of the poles of real topological and motivic naive zeta functions for curves. To do this, we follow the approach of \cite{veys1} and adapt Veys' arguments to the real setting. More precisely, we first study the contribution of a component for a given candidate pole and, using the real dual graph of the resolution, show that different contributions do not cancel each other out. This allows us to establish theorem \ref{thm 4}, which provides a numerical criterion to filter out the true poles from the resolution graph. In particular we prove the following.

\begin{thm}[= Theorem \ref{thm 4}]
    Let $f\in \mathbb{R}[x,y]$, $\sigma :(X, \sigma^{-1}(0)) \to (\mathbb{A}^2_{\mathbb{R}},0)$ the canonical embedded resolution of $f$ and $Z_{top,0}(f;s)$ denote the real topological zeta function associated with $f$. Then $s_0 \in \mathbb{Q}$ is a pole of $Z_{top,0}(f;s)$ if and only if $s_0=-\frac{1}{N_i}$ for some irreducible component of the strict transform $E_i$ such that $E_i(\mathbb{R}) \neq \emptyset$ or $s_0 = -\frac{\nu_i}{N_i}$ for some exceptional curve $E_i$ satisfying $(E_i \cdot \sum\limits_{j \neq i} E_j) \geq 3$. Equivalently, one has 
    $$\text{Poles}(Z_{top,0}(f;s)) = \text{Poles}(Z_{top,0}(f_{\mathbb{C}};s)) \cap  \{~ -\frac{\nu_i}{N_i} \mid  i \in J_{\mathbb{R}}~\}$$
    where $J_{\mathbb{R}}$ denote the sets of components of $(f \circ \sigma)^{-1}(0)=\underset{j \in J}{\bigcup} E_j$ whose real locus is non-empty.
\end{thm}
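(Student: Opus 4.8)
The plan is to follow the strategy of \cite{veys1} and to recompute each quantity there with the (virtual Poincaré) Euler characteristic of the real locus in place of the complex Euler characteristic. First I would write out the Denef--Loeser type formula for $Z_{top,0}(f;s)$ attached to the canonical resolution $\sigma$: it is a finite sum over subsets $I\subset J$ of terms of the shape $(\text{coefficient})\cdot\prod_{i\in I}\tfrac{1}{\nu_i+sN_i}$, where the coefficients are built from the Euler characteristics with compact supports of the real loci of the strata $E_I^{0}\cap\sigma^{-1}(0)$; since we are in dimension $2$, only $|I|\in\{1,2\}$ occur. Two features are immediate and already carry part of the statement: a component, or an intersection point, with empty real locus contributes nothing, so every pole lies in $\{\,-\nu_i/N_i\mid i\in J_{\mathbb{R}}\,\}$; and, in contrast with the complex case, $\mathbb{P}^1(\mathbb{R})=S^1$ has Euler characteristic $0$ rather than $2$, so the natural combinatorial object is the real dual graph $G_{\mathbb{R}}$ --- the dual graph of $\sigma$ restricted to the components with non-empty real locus and to the real intersection points --- which is a forest sitting inside the usual (complex) dual tree. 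The problem then reduces to computing, for each candidate value $s_0$, the order of $s_0$ as a pole and the leading coefficient of the Laurent expansion there.

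Fix $s_0=-\nu_{i_0}/N_{i_0}$ with $i_0\in J_{\mathbb{R}}$ and set $\mathcal{C}=\{\,i\in J_{\mathbb{R}}\mid\nu_i/N_i=\nu_{i_0}/N_{i_0}\,\}$. The order of $s_0$ is at most $2$, and it equals $2$ exactly when $\mathcal{C}$ contains two components adjacent in $G_{\mathbb{R}}$. I would dispose of this order-$2$ case first: using the structure of the canonical embedded resolution of a plane curve --- any intersection point of two exceptional curves that is blown up must also lie on the strict transform, so the new curve meets at least three others, and two branches of the strict transform never meet --- one sees that such an $s_0$ is automatically of one of the two asserted forms ($-1/N_i$ for a real strict transform component, or $-\nu_i/N_i$ for a real rupture curve, i.e. with $(E_i\cdot\sum_{j\neq i}E_j)\geq 3$), and that the coefficient of $(s-s_0)^{-2}$ is a nonzero multiple of the number of real intersection points realising such an edge, hence nonzero.

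The core is the case where $s_0$ has order $\le 1$, so that $\operatorname{Res}_{s_0}Z_{top,0}(f;s)=\sum_{i\in\mathcal{C}}c_i$, where $c_i$ is the contribution of $E_i$, assembled from its $|I|=1$ term together with the $|I|=2$ terms of its intersection points with its \emph{real} neighbours $E_j$, through $\nu_j+s_0N_j=(\nu_jN_i-\nu_iN_j)/N_i$. Here the computations of \cite{veys1} must be redone with $\chi(E_i^0)$ replaced by the real Euler characteristic of $E_i^0(\mathbb{R})$ (equal to minus the number of surviving real punctures) and with only the real neighbours counted; invoking the relations among the numerical data $(\nu_i,N_i)$ along the resolution tree that are exploited in \cite{veys1}, one shows that $c_i$ vanishes precisely when $E_i$ is neither a rupture curve nor a strict transform component, and is nonzero otherwise. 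It then remains to show that the nonzero contributions cannot cancel one another; this is where $G_{\mathbb{R}}$ intervenes: rooting the tree at the vertex minimising $\nu/N$ and reading off the signs of the integers $\nu_jN_i-\nu_iN_j$, which are monotone along the branches, one argues that the nonzero $c_i$ share a common sign, so $\sum_{i\in\mathcal{C}}c_i\neq 0$ as soon as one term is nonzero. Together with the previous paragraph this establishes the numerical criterion.

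For the reformulation I would compare with the complex picture: the canonical embedded resolution commutes with base change to $\mathbb{C}$, so the numerical data of the real components and their combinatorial type (rupture curve / strict transform component) are unchanged, while the components removed when one intersects with $\{\,-\nu_i/N_i\mid i\in J_{\mathbb{R}}\,\}$ are exactly those with empty real locus (which may split, or change type, over $\mathbb{C}$); applying Theorem~\ref{thm 5} to $f_{\mathbb{C}}$ and intersecting its poles with $\{\,-\nu_i/N_i\mid i\in J_{\mathbb{R}}\,\}$ then returns precisely the set appearing in the first characterisation. The step I expect to be genuinely delicate is the non-cancellation argument of the third paragraph: over $\mathbb{R}$ the graph $G_{\mathbb{R}}$ may be disconnected and some vertices of the complex dual tree disappear, so the clean sign bookkeeping available over $\mathbb{C}$ has to be re-established by hand, tracking which punctures of each $E_i^0(\mathbb{R})$ persist and the possibly mixed signs of the factors $1/(\nu_jN_i-\nu_iN_j)$.
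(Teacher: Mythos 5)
Your high-level strategy (per-component contributions at each candidate pole, then a non-cancellation argument on the real dual graph) is the same as the paper's, but the proposal has a concrete error that breaks the key computation. The coefficients in $Z_{top,0}(f;s)$ are \emph{not} Euler characteristics with compact support of the real strata: they are the values of the invariant $\mu$ of Definition~\ref{def 4}, i.e.\ the virtual Poincar\'e polynomial evaluated at $u=1$, so that $\mu(\mathbb{P}^1(\mathbb{R}))=2$ (not $0$) and $\mu(E_i^0(\mathbb{R}))=2-k$ for an exceptional curve with $k$ real punctures (not $-k$, as you assert). This is not cosmetic: the whole criterion rests on the identity $\sum_{j=1}^{k}\alpha_j+2\sum_{j=k+1}^{k+r}\alpha_j=k+2r-2$ of Proposition~\ref{prop 3} (in which the \emph{complex} intersection points, which you drop entirely, enter with weight $2$), matched against the constant $2-k$. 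With your value $-k$, a real exceptional curve of valence $1$ would contribute $\frac{1}{N_i}(-1+\frac{1}{\alpha_1})=-\frac{2}{N_i}\neq 0$ since $\alpha_1=-1$, so the claimed vanishing for curves with $(E_i\cdot\sum_{j\neq i}E_j)<3$ fails and you would be proving a (false) statement about the Koike--Parusi\'nski specialization $Z_{\chi_c,0}$ rather than about $Z_{top,0}$. With $\mu$ the constant is $2-k$, and Theorem~\ref{thm 3} (via Loeser's bounds $-1\le\alpha_j<1$ and Lemma~\ref{lem 1}) gives exactly the dichotomy you want, including the sign of each nonzero contribution.

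The second gap is the non-cancellation step, which you flag as delicate but do not resolve, and which you set up on a false premise: the real total dual graph is not a possibly disconnected forest but a \emph{tree} (Remark~\ref{rem 8}; every real exceptional curve arises from blowing up a real point lying on the previously created real configuration). Connectedness is precisely what makes the argument close: the minimal part $\mathcal{M}$ is a connected subgraph and the ratios $\nu_j/N_j$ strictly increase along any path leaving it (Corollary~\ref{cor 2}, itself a consequence of Corollary~\ref{cor 1}), so every contributing component at the minimal candidate pole has all its $\alpha_j>0$ (contribution $\ge 2/N_i>0$), while every contributing component at a non-minimal candidate pole has some $\alpha_{j_0}<0$ along the path back to $\mathcal{M}$ (contribution $<0$). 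All nonzero contributions to a given residue therefore share a sign. Without establishing connectedness of $G$ and this monotonicity, your ``common sign'' claim is unsupported. Your reduction of the second characterisation to Veys' Theorem~\ref{thm 5} for $f_{\mathbb{C}}$ is fine once the numerical criterion is in place.
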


In the third section, we study the poles of real topological and motivic zeta functions with signs in the case of curves. The approach is the same as in the second section, except that the computation of a contribution for a given pole candidate is more intricate. In particular, we show (Corollary \ref{coro 5}) that 
$$\text{Poles}(Z_{top,0}^{\pm}(f;s)) \subset \text{Poles}(Z_{top,0}(f;s))  \cap \{~ -\frac{\nu_i}{N_i} \mid  i \in J_{\mathbb{R}}^{\pm}~\}.$$
 In the case with signs, cancellations can occur and the poles of topological zeta functions and motivic zeta functions do not always coincide (see Example \ref{ex 6}), unlike in the naive case.  To conduct a more precise study of poles, we therefore examine the contributions at the level of the virtual Poincaré polynomial. In particular we prove the following.

 \begin{thm} [= Corollary \ref{cor 6}]
     Let $s_0 \in \mathbb{Q}$. Assume there exists exactly one $i \in J_{\mathbb{R}}^{\pm}$ such that either $E_i$ is an exceptional curve satisfying $(E_i\cdot \sum\limits_{j\neq i } E_j) \geq 3$ and $s_0=-\frac{\nu_i}{N_i}$ or such that $E_i$ is an irreducible component of the strict transform and $s_0=-\frac{1}{N_i}$. Then $s_0$ is a pole of $Z_{\beta,0}^{\pm}(f;u^{-s})$.\\
    In cases where there is at most one nonzero contribution for every candidate pole as above, this yields
    $$\text{Poles}(Z_{\beta,0}^{\pm}(f;u^{-s}))= \text{Poles}(Z_{\beta,0}(f;u^{-s}))  \cap \{~ -\frac{\nu_i}{N_i} \mid  i \in J_{\mathbb{R}}^{\pm}~\}.$$
 \end{thm}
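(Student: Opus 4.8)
The plan is to follow the strategy of Section~2 and of \cite{veys1}, but now at the level of the virtual Poincaré polynomial rather than the Euler characteristic. Recall that $Z_{\beta,0}^{\pm}(f;u^{-s})$ is given by a Denef--Loeser type formula: a finite sum, indexed by the subsets $I$ of the set of components $E_j$ carrying the relevant (sign) data, of a coefficient built from the virtual Poincaré polynomial of the stratum $E_I^{0}\cap\sigma^{-1}(0)$ and of its associated double cover recording the sign of $f$, times $\prod_{i\in I}\frac{u-1}{u^{\nu_i+N_i s}-1}$. For a fixed $s_0\in\mathbb{Q}$, the only summands that can produce a pole there are those with some $i\in I$ satisfying $-\nu_i/N_i=s_0$; as in the complex case, the set $\mathcal{C}(s_0)=\{\,k\mid -\nu_k/N_k=s_0\,\}$ is a connected chain in the real dual graph of the canonical resolution. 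Regrouping the relevant summands according to the vertex of $\mathcal{C}(s_0)$ — and the possible adjacent vertex — that they involve, one writes the polar part of $Z_{\beta,0}^{\pm}$ at $s_0$ as a sum $\sum_{k\in\mathcal{C}(s_0)}\mathrm{Cont}(E_k,s_0)$ of \emph{contributions} $\mathrm{Cont}(E_k,s_0)\in\mathbb{Z}[u,u^{-1}]$, so that $s_0$ is a pole of $Z_{\beta,0}^{\pm}(f;u^{-s})$ if and only if this sum is nonzero.

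The first assertion then reduces to two inputs. On one hand, the single-contribution computation of Section~3 shows that if $E_i$ is an exceptional curve with $(E_i\cdot\sum_{j\neq i}E_j)\geq 3$ and $s_0=-\nu_i/N_i$, or a strict-transform component with $E_i(\mathbb{R})\neq\emptyset$ and $s_0=-1/N_i$, then $\mathrm{Cont}(E_i,s_0)$ is a \emph{nonzero} Laurent polynomial in $u$. On the other hand, by hypothesis this $E_i$ is the unique vertex of $\mathcal{C}(s_0)$ lying in $J_{\mathbb{R}}^{\pm}$ \emph{and} satisfying the numerical condition; every other vertex $E_k$ of the chain either is absent from the signed formula (hence $\mathrm{Cont}(E_k,s_0)=0$) or is an exceptional curve meeting at most two other components, in which case $\mathrm{Cont}(E_k,s_0)=0$ by the same telescoping/cancellation argument as in \cite{veys1} (an end of the chain meeting a single component being excluded by the canonicity of $\sigma$). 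Therefore $\sum_{k}\mathrm{Cont}(E_k,s_0)=\mathrm{Cont}(E_i,s_0)\neq 0$ and $s_0$ is a pole.

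For the equality of pole sets under the extra hypothesis, the inclusion $\subseteq$ is the $\beta$-level form of Corollary~\ref{coro 5}: a pole of $Z_{\beta,0}^{\pm}$ is a pole of $Z_{\beta,0}$ of the form $-\nu_i/N_i$ with $i\in J_{\mathbb{R}}^{\pm}$. Conversely, let $s_0\in\text{Poles}(Z_{\beta,0}(f;u^{-s}))\cap\{-\nu_i/N_i\mid i\in J_{\mathbb{R}}^{\pm}\}$. By Theorem~\ref{thm 4} applied to the naive zeta function, $s_0$ is realised either by an exceptional curve meeting at least three other components or by a real strict-transform component; the structure of $\mathcal{C}(s_0)$ shows one may take such a witness $E_i$ with $i\in J_{\mathbb{R}}^{\pm}$. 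Then $s_0$ is a candidate pole "as above" with at least one nonzero contribution (namely $\mathrm{Cont}(E_i,s_0)$), hence by hypothesis exactly one, and the first part of the corollary applies: $s_0\in\text{Poles}(Z_{\beta,0}^{\pm}(f;u^{-s}))$.

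The genuine difficulty, as indicated in the introduction, is the single-contribution computation behind the first input: in contrast with the topological setting, each stratum now carries the virtual Poincaré polynomial of a double cover whose number of connected components depends on the parities of the $N_j$ along $\mathcal{C}(s_0)$ and on whether $f$ changes sign across the $E_j$'s. Tracking these signs while summing the geometric-series type terms over all subsets of $\mathcal{C}(s_0)$ and its neighbours, and checking that the resulting Laurent polynomial does not vanish identically precisely in the two listed configurations, is the new content compared with \cite{veys1}; it is also exactly what can fail when several numerically good components share a candidate pole, so that their contributions cancel and the equality of pole sets breaks down (Example~\ref{ex 6}), which is why the "at most one nonzero contribution" hypothesis is imposed.
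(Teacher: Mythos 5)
Your route for the first assertion is the paper's own: write the polar part of $Z_{\beta,0}^{\pm}(f;u^{-s})$ at $s_0$ as the sum of the per-component contributions $\mathcal{R}_{\beta,k}^{\pm}$, observe that every component with $(E_k\cdot\sum_{j\neq k}E_j)<3$ contributes zero even at the level of $\beta$ (Proposition \ref{prop 6}), and invoke the nonvanishing of the one remaining contribution (Theorem \ref{thm 6} for $N_i$ odd, Proposition \ref{prop 11} for $N_i$ even, via $\beta(\widetilde{E}_i^{0,+}(\mathbb{R}))=c(u+1)-\sum_j\beta(\widetilde{E}_{i,j}^{0,+}(\mathbb{R}))$ from Proposition \ref{prop 7} and the asymptotic expansion at $u\to 0$ using $-\alpha_1<\min_{2\le j\le k}\alpha_j$). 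Deferring that computation is legitimate, since the statement is a corollary of exactly those results, and you correctly locate where the new difficulty lies. One small correction: the case of an exceptional curve meeting a single other component is \emph{not} excluded by canonicity of $\sigma$; it occurs and is handled in Proposition \ref{prop 6} (there $\alpha_1=-1$ and the contribution telescopes to zero), and for a strict-transform component the condition for a nonzero contribution is $i\in J_{\mathbb{R}}^{\pm}$, not merely $E_i(\mathbb{R})\neq\emptyset$.

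There are, however, two genuine gaps. First, your claim that $\mathcal{C}(s_0)=\{k\mid -\nu_k/N_k=s_0\}$ is a connected chain in the real dual graph is unsupported and false in general: Corollary \ref{cor 2} only gives connectivity of the \emph{minimal} set $\mathcal{M}$, and for a non-minimal ratio the components realizing it may lie in different branches of the tree. This is harmless for the first assertion (grouping the summands by component requires no connectivity), but you lean on it for the reverse inclusion of the equality of pole sets. There, from $s_0\in\text{Poles}(Z_{\beta,0}(f;u^{-s}))\cap\{-\nu_i/N_i\mid i\in J_{\mathbb{R}}^{\pm}\}$, Theorem \ref{thm 4} produces a witness $E_j$ with $j\in J_{\mathbb{R}}$ satisfying the numerical condition, and the intersection with $\{-\nu_i/N_i\mid i\in J_{\mathbb{R}}^{\pm}\}$ separately produces some $i\in J_{\mathbb{R}}^{\pm}$ with the same ratio; nothing in ``the structure of $\mathcal{C}(s_0)$'' forces these to coincide, and if they do not (e.g. the numerically good component has $E_j(\mathbb{R})\cap\overline{P(f)}=\emptyset$ while the component in $J_{\mathbb{R}}^{+}$ meets only two others), every contribution to $Z_{\beta,0}^{+}$ at $s_0$ vanishes. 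This is exactly the point where the hypothesis ``at most one nonzero contribution for every candidate pole as above'' must be unpacked rather than asserted away; as stated, your argument for the inclusion $\supseteq$ does not go through.
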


Finally, we propose an interpretation of the poles of these real zeta functions in terms of eigenvalues of the monodromy acting on the Milnor fiber at a point in the real locus of $f$ close to the origin.

\bigskip
\noindent
{\bf Acknowledgements.} This work is part of the author's PhD thesis. He thanks his advisor Goulwen Fichou for his suggestions and support during the preparation of this work.

%%%%%%%%%%%%%%%%%%%%%%%%%%%%%%%%%%%%%%%%%%%%%%%%%%%%%%%%%%%%%%%%%%%%%%%%%%%%%

\bigskip
\noindent

\section{Real motivic zeta functions}
\label{section 1}

An algebraic variety over $\mathbb{R}$ will refer to a reduced scheme of finite type over $\mathbb{R}$, while the term real algebraic variety will be reserved for varieties as defined in \cite{bcr}. If $X$ is an algebraic variety over $\mathbb{R}$, the set $X(\mathbb{R})$ of real closed points of $X$ is naturally endowed with a structure of real algebraic variety and, a fortiori, with a structure of real analytic space. Conversely, if $Y$ is a real algebraic variety, there exists an algebraic variety $X$ over $\mathbb{R}$ (generally not unique) such that $X(\mathbb{R})$ and $Y$ are isomorphic as real algebraic varieties. However, the category of algebraic varieties over $\mathbb{R}$ is not equivalent to that of real algebraic varieties since the latter has strictly more morphisms. This larger class of morphisms implies, for example, that every quasi-projective real algebraic variety is affine (\cite{bcr} Theorem 3.4.4). Finally, if $X$ is an algebraic variety over $\mathbb{R}$, we will denote  $X(\mathbb{C})$ the set of complex points of its complexification.

\begin{defi}
    \label{def 1}
    We denote by K$_0(\text{Var}_{\mathbb{R}})$ (resp. K$_0$($\mathbb{R}$Var)) the free abelian group generated by the isomorphism classes $[X]$ of algebraic varieties over $\mathbb{R}$ (resp. of real algebraic varieties) modulo all relations of the form $[X]-[Y]- [X\setminus Y]$ whenever $Y$ is a closed subvariety of $X$. We equip K$_0(\text{Var}_{\mathbb{R}})$ and K$_0$($\mathbb{R}$Var) with a ring structure by setting $[X]\times [Y] = [X \times Y]$ and we refer to K$_0(\text{Var}_{\mathbb{R}})$ (resp. K$_0$($\mathbb{R}$Var)) as the Grothendieck ring of algebraic varieties over $\mathbb{R}$ (resp. the Grothendieck ring of real algebraic varieties). There is a natural ring morphism K$_0(\text{Var}_{\mathbb{R}}) \to  \text{K}_0(\mathbb{R}$Var) that maps $[X]$ to $[X(\mathbb{R})]$.
    Finally, we denote by $\mathcal{M}_{\mathbb{R}}$ the localization of K$_0$($\mathbb{R}$Var) with respect to the class of the affine line $\mathbb{L}=[\mathbb{R}]$.
\end{defi}

\begin{defi}
For $n \geq 1$, we denote by $\mathcal{L}_n(\mathbb{R}^d,0)$ the space of formal arcs truncated at order $n$ and starting at the origin in $\mathbb{R}^d$, i.e.

$$ \mathcal{L}_n(\mathbb{R}^d,0) = \{~ \gamma :\mathbb{R} \to \mathbb{R}^d \text{  formal arcs } \mid \gamma(0)=0 ~\}/ \sim $$
where $\gamma_1 \sim \gamma_2$ if and only if $\gamma_1(t) = \gamma_2(t)$ (mod $t^{n+1}$). In other words 
$$\mathcal{L}_n(\mathbb{R}^d,0) = \{~ \gamma(t) = \sum\limits_{i=1}^n a_i t^{i}\mid (a_1,\dots,a_n) \in \mathbb{R}^{nd} ~\} \simeq \mathbb{R}^{nd}.$$

\end{defi}

We now consider $f\in \mathbb{R}[x_1,\dots,x_d]$ vanishing at the origin, and we denote by $f_{\mathbb{C}}$ its complexification, that is, the polynomial $f$ viewed as an element of $\mathbb{C}[x_1,\dots,x_d]$.

\begin{defi}
    \label{def 2}
    We denote by $\mathcal{X}_n(f)$ the set of all arcs $\gamma \in \mathcal{L}_n(\mathbb{R}^d,0)$ whose order of contact with the hypersurface $\{ f =0\}$ equals $n$, i.e.
    $$\mathcal{X}_n(f)=\{~ \gamma \in \mathcal{L}_n(\mathbb{R}^d,0) \mid\text{ord}_t(f\circ\gamma) = n ~\} $$
    and by $\mathcal{X}_n^{\pm}(f) \subset \mathcal{X}_n(f)$ the subsets
    $$\mathcal{X}_n^{\pm}(f)=\{~ \gamma \in \mathcal{X}_n(f) \mid (f\circ \gamma) (t) = \pm t^n +o(t^{n+1}) ~\}.$$
    
    The sets $\mathcal{X}_n(f)$ and $\mathcal{X}_n^{\pm}(f)$ are Zariski-constructible (i.e. unions of locally closed subsets) in $\mathcal{L}_n(\mathbb{R}^d,0)\simeq \mathbb{R}^{nd}$. In particular, $\mathcal{X}_n(f)$ and $\mathcal{X}_n^{\pm}(f)$ defines elements of K$_0(\mathbb{R}\text{Var})$. These constructible sets refine the Fukui invariants (resp. the  Fukui invariants with signs) which have been studied in \cite{koike2} and \cite{fukui1}.
\end{defi}

\begin{rem}
    \label{rem 1}
    Let us emphasize that in the ring K$_0(\mathbb{R}\text{Var})$ we consider the real points of algebraic varieties over $\mathbb{R}$. For example, if $f=x^2+y^2 $ one has 
    $$\mathcal{X}_2(f) \simeq \{~ (a_1,a_2,b_1,b_2) \in \mathbb{R}^4 \mid a_1^2+b_1^2 \neq 0 ~\}= \mathbb{R}^2\times\mathbb{R}^2\setminus\{0\}$$ and therefore 
    $[\mathcal{X}_2(f)] = \mathbb{L}^2(\mathbb{L}^2-1) \text{ in } \text{K}_0(\mathbb{R}\text{Var})$. For the complexification $f_{\mathbb{C}}$, one has 
    $$\mathcal{X}_2(f_{\mathbb{C}})= \simeq \{~ (a_1,a_2,b_1,b_2) \in \mathbb{C}^4 \mid a_1^2+b_1^2 \neq 0 ~\} \simeq \mathbb{C}^2\times\mathbb{C}^2\setminus \{xy=0\} $$ so that 
    $\mathcal{X}_2(f_{\mathbb{C}}) = \mathbb{L}^2(\mathbb{L}^2-2\mathbb{L}+1)$ in K$_0(\text{Var}_{\mathbb{C}})$.
\end{rem}

\begin{defi}
    \label{def 3}
    The real local and naive motivic zeta function associated with $f$ is the formal power series 
    $$Z_{mot,0}(f;T) = \sum\limits_{n \geq 1} [\mathcal{X}_n(f)] \mathbb{L}^{-nd}T^n \in \mathcal{M}_{\mathbb{R}}[[T]].$$
    It is therefore an invariant (in the sense of \ref{thm 1}) of the real points of $\{f=0\}$ in a neighborhood of the origin.
    The motivic zeta functions with signs are defined by 
    $$Z_{mot,0}^{\pm}(f;T) = \sum\limits_{n \geq 1} [\mathcal{X}_n^{\pm}(f)] \mathbb{L}^{-nd}T^n \in \mathcal{M}_{\mathbb{R}}[[T]].$$
\end{defi}

To obtain more concrete invariants, it is useful to have motivic measures of real algebraic varieties, i.e., ring morphisms K$_0(\mathbb{R}\text{Var}) \to A$. The finest additive and multiplicative invariant of real algebraic varieties known to date is the virtual Poincaré polynomial.

\begin{thm}[\cite{mccrory1} Corollary 2.2]
    There exists a unique ring morphism $ \beta : \text{K}_0(\mathbb{R}\text{Var}) \to \mathbb{Z}[u]$ such that $\beta(X) = \sum\limits_i \text{dim}(\text{H}_i(X;\mathbb{Z}/2\mathbb{Z})) u^{i}$ when $X$ is smooth and compact. One can recover the Euler characteristic with compact support of $X$ by evaluating $\beta(X)$ at $u=-1$.
\end{thm}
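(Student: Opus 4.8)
The plan is to separate the statement into the uniqueness of $\beta$, its existence, and the value at $u=-1$, using as geometric inputs resolution of singularities in the category of real algebraic varieties and a weak factorization theorem for birational maps of smooth compact real algebraic varieties \cite{wlod}. For uniqueness: applying the relation $[X]=[Y]+[X\setminus Y]$ repeatedly to peel off singular loci shows that every class of $\mathrm{K}_0(\mathbb{R}\mathrm{Var})$ is a $\mathbb{Z}$-linear combination of classes of \emph{smooth} real algebraic varieties. If $U$ is smooth of dimension $n$, choosing a smooth compact $\bar U$ with an open immersion $U\hookrightarrow\bar U$ and $\dim(\bar U\setminus U)<n$ yields $\beta(U)=\beta(\bar U)-\beta(\bar U\setminus U)$, where $\beta(\bar U)$ is prescribed by the normalization and $\beta(\bar U\setminus U)$ is determined by induction on dimension; hence $\beta$ is unique if it exists.

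For existence, I would invoke a Bittner-type presentation of $\mathrm{K}_0(\mathbb{R}\mathrm{Var})$: it is generated by the classes of smooth compact real algebraic varieties, subject to $[\emptyset]=0$ and the blow-up relation $[\mathrm{Bl}_ZX]-[E]=[X]-[Z]$ for $X$ smooth compact, $Z\subset X$ smooth closed, $\pi\colon\mathrm{Bl}_ZX\to X$ the blow-up and $E=\pi^{-1}(Z)$ its exceptional divisor --- and it is here that weak factorization is used. On a generator set $\beta(X)=\sum_i\dim_{\mathbb{Z}/2}\mathrm{H}^i(X;\mathbb{Z}/2)\,u^i$ (equal to the homology version for $X$ smooth compact). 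That this descends to $\mathrm{K}_0(\mathbb{R}\mathrm{Var})$ amounts to checking the blow-up relation: if $Z$ has codimension $c$ in $X$, then on real points $E=\mathbb{P}(N_{Z/X})$ is an $\mathbb{RP}^{c-1}$-bundle over $Z$, so Leray--Hirsch with $\mathbb{Z}/2$-coefficients gives $\beta(E)=(1+u+\dots+u^{c-1})\beta(Z)$, while the classical blow-up formula $\mathrm{H}^k(\mathrm{Bl}_ZX;\mathbb{Z}/2)\cong\mathrm{H}^k(X;\mathbb{Z}/2)\oplus\bigoplus_{j=1}^{c-1}\mathrm{H}^{k-j}(Z;\mathbb{Z}/2)$ gives $\beta(\mathrm{Bl}_ZX)=\beta(X)+(u+\dots+u^{c-1})\beta(Z)$; subtracting, $\beta(\mathrm{Bl}_ZX)-\beta(E)=\beta(X)-\beta(Z)$ in $\mathbb{Z}[u]$. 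Multiplicativity on generators is the mod-$2$ K\"unneth formula, so $\beta$ is a ring morphism taking the prescribed value on smooth compact varieties.

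For the value at $u=-1$: when $X$ is smooth compact, $\beta(X)|_{u=-1}=\sum_i(-1)^i\dim_{\mathbb{Z}/2}\mathrm{H}^i(X;\mathbb{Z}/2)=\chi(X)=\chi_c(X)$, the Euler characteristic being independent of the coefficient field and equal to $\chi_c$ on compact spaces. Since $X\mapsto\chi_c(X)$ is also a ring morphism $\mathrm{K}_0(\mathbb{R}\mathrm{Var})\to\mathbb{Z}$ and agrees with $X\mapsto\beta(X)|_{u=-1}$ on the generating set of smooth compact varieties, the uniqueness argument forces them to agree on all of $\mathrm{K}_0(\mathbb{R}\mathrm{Var})$.

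The main obstacle is the existence part, and specifically its two geometric ingredients, neither of them formal: resolution of singularities and weak factorization for real algebraic varieties (to get the Bittner presentation), and the fact that the algebraic blow-up of a smooth real algebraic variety along a smooth closed subvariety induces, on real points, the honest blow-up of a manifold along a submanifold --- so that the $\mathbb{Z}/2$-cohomology blow-up and projective-bundle formulas apply. Granting these, the well-definedness of $\beta$ is the short computation displayed above.
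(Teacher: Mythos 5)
The paper does not prove this statement: it is quoted verbatim from \cite{mccrory1} (Corollary 2.2), so there is no internal proof to compare against. Your reconstruction is correct and follows essentially the strategy of that cited source: reduce to smooth compact generators via resolution/compactification (uniqueness), use a Bittner-type presentation of $\mathrm{K}_0(\mathbb{R}\mathrm{Var})$ obtained from weak factorization, and verify the blow-up relation for the mod-$2$ Poincar\'e polynomial via the $\mathbb{RP}^{c-1}$-bundle and blow-up decompositions in $\mathbb{Z}/2$-cohomology; the $u=-1$ specialization then agrees with $\chi_c$ by the same uniqueness argument. The two geometric inputs you flag as nonformal (real resolution/weak factorization, and the identification of the algebraic blow-up on real points with the topological one) are exactly the ingredients the cited reference supplies, so your sketch is faithful modulo those standard facts.
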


\begin{ex}
\label{ex 5}
    The real projective line $\mathbb{P}^1(\mathbb{R})$ is smooth and compact, hence $\beta(\mathbb{P}^1(\mathbb{R}))=1+u$. By additivity one can deduce, for example, that $\beta(\mathbb{P}^1(\mathbb{R}) \setminus \{ \text{k points} \})=u+1-k$. Note that the virtual Poincaré polynomial \textit{is not a topological invariant} of real algebraic varieties (see \cite{mccrory1} Example 2.7). 
\end{ex}

\begin{rem}

\label{rem 2}
    As soon as one has a ring morphism $ \varphi :  \text{K}_0(\mathbb{R}\text{Var}) \to A$ with $\varphi(\mathbb{L}) \neq 0$, it induces a morphism  $\mathcal{M}_{\mathbb{R}}[[T]] \to A[\varphi(\mathbb{L})^{-1}] [[T]]$ and thus a specialization of the motivic zeta functions associated with $f$. For example, if $\chi_c : \text{K}_0(\mathbb{R}\text{Var}) \to \mathbb{Z}$ denotes the Euler characteristic with compact support, one obtains the specialization
    $$Z_{\chi_c, 0}(f;T) = \sum\limits_{n \geq 1} \chi_{c}(\mathcal{X}_n(f)) (-1)^{-nd}T^n \in \mathbb{Z}[[T]]$$
    which has been studied in \cite{koike1}. If $\beta: \text{K}_0(\mathbb{R}\text{Var}) \to \mathbb{Z}[u]$ denotes the virtual Poincaré polynomial, one obtains the specialization 
    $$Z_{\beta,0}(f;T) = \sum\limits_{n \geq 1} \beta(\mathcal{X}_n(f)) u^{-nd}T^n \in \mathbb{Z}[u,u^{-1}][[T]]$$
    which was studied in \cite{goulw1}.
\end{rem}

\begin{notation}
    \label{not 1}
Let $ \sigma : (X,\sigma^{-1}(0)) \to (\mathbb{R}^d,0)$ be an algebraic modification (i.e., a proper and birational map that is an isomorphism outside the zero locus of $f$) such that the divisors $\sigma^{*} (\text{div}(f)) = \text{div}(f \circ \sigma)$ and $\sigma^{*}(dx_1 \wedge \dots \wedge dx_d)$ are simultaneously normal crossings.\\
This means that for every $p \in \sigma^{-1}(f^{-1}(0))$ there is a local coordinate system $(y_1,\dots,y_d)$ centered at $p$ in $X$ such that $f( \sigma(y_1,\dots,y_d)) = u y_1^{N_1}\dots y_d^{N_d}$ with  $u(p)\neq 0$ and also $\text{jac }\sigma = v y_1^{\nu_1 - 1}\dots y_d^{\nu_d - 1}$ with $v(p) \neq 0$. Such a modification always exists according to Hironaka's theorem \cite{hiro1} and we will also say that $\sigma :(X, \sigma^{-1}(0)) \to (\mathbb{R}^d,0)$ is an embedded resolution of $f$.\\
Once we have an embedded resolution, we denote by $\underset{j \in J}{\bigcup} E_j$ the decomposition into irreducible components of $\sigma^{-1}(f^{-1}(0))$, where, by definition, the $E_j$ are smooth irreducible hypersurfaces in $X$ that intersect transversally. We then denote by $\underset{I}{ \bigsqcup}E^{0}_I$ the canonical stratification of $\underset{j \in J}{\bigcup} E_j$ into smooth subvarieties, where $I$ runs over the set of non-empty subsets of $J$ and $E^{0}_I = \underset{i \in I}{\bigcap} E_i \setminus \underset{j \in J\setminus I}{\bigcup} E_j$. Finally, the numerical data of the resolution  are the integers $N_i = \text{mult}_{E_i} (f\circ \sigma)$ and $\nu_i =1+ \text{mult}_{E_i} ( \text{jac } \sigma)$, which can be computed in a local coordinate system as above and do not depend on the chosen coordinate. In other words, one has 
$$\text{div}(f \circ \sigma) = \sum\limits_{j \in J} N_j E_j \text{ and } K_X = \sum\limits_{j \in J} (\nu_j-1)E_j.$$
Note that if $E_i$ is an irreducible component of the strict transform, then $\nu_i=1$, and when $f$ is reduced, we also have $N_i=1$.

\end{notation}

There exists a Denef-Loeser type formula that expresses the real motivic zeta function of $f$ in terms of an embedded resolution as described above.

\begin{thm}[\cite{goulw1} Proposition 4.2.]
\label{thm 2}
Let $\sigma : (X,\sigma^{-1}(0)) \to (\mathbb{R}^d,0)$ be an embedded resolution of $f$. Using the notation from \ref{not 1}, one has 
$$Z_{mot,0}(f;T) = \sum\limits_{ \emptyset \neq I \subset J} (\mathbb{L}-1)^ {|I|} [E^{0}_I \cap \sigma^{-1}(0)] \underset{i \in I}{\prod}\frac{\mathbb{L}^{-\nu_i}T^{N_i}}{1-\mathbb{L}^{-\nu_i}T^{N_i}}$$
\end{thm}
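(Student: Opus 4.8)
The plan is to carry out the Denef--Loeser computation via the embedded resolution $\sigma$, working with the real motivic measure on arc spaces of real algebraic varieties (as set up in \cite{goulw1}) rather than its complex counterpart, and keeping track throughout that all varieties and arcs are taken with their real points and that the classes in play live in $\mathcal{M}_{\mathbb{R}}$. Concretely, I would: (i) realize each coefficient $[\mathcal{X}_n(f)]\mathbb{L}^{-nd}$ as the measure of a cylinder in the arc space $\mathcal{L}(\mathbb{R}^d,0)=\varprojlim_m \mathcal{L}_m(\mathbb{R}^d,0)$; (ii) lift arcs through $\sigma$ and apply the change-of-variables formula, which produces the Jacobian factor $\mathbb{L}^{-\text{ord}_t(\text{jac}\,\sigma\circ\tilde{\gamma})}$; (iii) stratify the lifted arcs according to the stratum $E_I^0\cap\sigma^{-1}(0)$ containing their origin and to the contact multiplicities $k_i=\text{ord}_{E_i}\tilde{\gamma}$ for $i\in I$; (iv) carry out the local computation in normal-crossings coordinates; (v) resum the resulting geometric series in the $k_i$.

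For steps (i)--(ii): since the condition $\text{ord}_t(f\circ\gamma)=n$ depends only on the $n$-jet of $\gamma$, the set $\mathcal{X}_n(f)$ is a cylinder in $\mathcal{L}(\mathbb{R}^d,0)$ whose measure is $[\mathcal{X}_n(f)]\mathbb{L}^{-nd}$ for the naturally normalized measure, so that $Z_{mot,0}(f;T)=\sum_{n\geq 1}\mu\bigl(\{\gamma:\text{ord}_t(f\circ\gamma)=n\}\bigr)T^n$. Because $\sigma$ is proper and birational, and an isomorphism away from $f^{-1}(0)$, the valuative criterion of properness applied to $\text{Spec}\,\mathbb{R}[[t]]\to\mathbb{R}^d$ (whose generic point avoids $f^{-1}(0)$ once $\text{ord}_t(f\circ\gamma)<\infty$) lifts every such real arc $\gamma$ uniquely to a real arc $\tilde{\gamma}$ on $X$ with $\tilde{\gamma}(0)\in\sigma^{-1}(0)$; here it is essential that the lift can be taken over $\mathbb{R}$ and that the loci involved are $\mathbb{R}$-constructible. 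One then invokes the transformation rule for the real motivic measure: for a measurable cylinder $A$ in the arc space of $X$,
\[
\mu\bigl(\sigma_*A\bigr)=\int_A \mathbb{L}^{-\text{ord}_t(\text{jac}\,\sigma\circ\tilde{\gamma})}\, d\mu_X .
\]

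For steps (iii)--(v): fix $\emptyset\neq I\subset J$ and integers $k_i\geq 1$ for $i\in I$. Near a point $p\in E_I^0\cap\sigma^{-1}(0)$ choose local coordinates $(y_1,\dots,y_d)$ with $E_i=\{y_i=0\}$ for $i\in I$, $f\circ\sigma=u\prod_{i\in I}y_i^{N_i}$ and $\text{jac}\,\sigma=v\prod_{i\in I}y_i^{\nu_i-1}$, $u(p),v(p)\neq 0$. For an arc $\tilde{\gamma}$ with $\tilde{\gamma}(0)=p$ and $\text{ord}_{E_i}\tilde{\gamma}=k_i$ one reads off $\text{ord}_t(f\circ\gamma)=\sum_{i\in I}N_ik_i$ and $\text{ord}_t(\text{jac}\,\sigma\circ\tilde{\gamma})=\sum_{i\in I}(\nu_i-1)k_i$. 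The set of such arcs fibers over $E_I^0\cap\sigma^{-1}(0)$, and over each point the condition $\text{ord}_t(y_i)=k_i$ on the $i$-th coordinate cuts out a set of normalized measure $(\mathbb{L}-1)\mathbb{L}^{-k_i}$ (the factor $\mathbb{L}-1$ accounting for the nonzero leading coefficient) while the remaining coordinates are unconstrained; hence this cylinder has $\mu_X$-measure $[E_I^0\cap\sigma^{-1}(0)](\mathbb{L}-1)^{|I|}\mathbb{L}^{-\sum_{i\in I}k_i}$. Multiplying by the Jacobian factor, which is constant on this cylinder, gives a contribution $(\mathbb{L}-1)^{|I|}[E_I^0\cap\sigma^{-1}(0)]\mathbb{L}^{-\sum_{i\in I}\nu_ik_i}$ to the coefficient of $T^{\sum_{i\in I}N_ik_i}$. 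Summing over all $k_i\geq 1$, over $n$, and over $I$, and using $\sum_{k\geq 1}\mathbb{L}^{-\nu_ik}T^{N_ik}=\frac{\mathbb{L}^{-\nu_i}T^{N_i}}{1-\mathbb{L}^{-\nu_i}T^{N_i}}$, yields the stated formula. (Truncation bookkeeping is best handled either by carrying everything at a fixed large jet level and passing to the limit, or by working directly with the $\mathcal{M}_{\mathbb{R}}$-valued measure on the full arc space once its elementary properties are in place.)

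The combinatorics above is formally identical to the complex case, so I expect the only genuine difficulty to be foundational: having at one's disposal the real motivic measure on arc spaces of real algebraic varieties, its change-of-variables formula for proper birational maps defined over $\mathbb{R}$, and the $\mathbb{R}$-constructibility of all the loci that occur (the sets $\mathcal{X}_n(f)$, the cylinders on $X$ split according to contact order, and their images under $\sigma$). Granting these — they are precisely the ingredients developed in \cite{goulw1} — the local computation and the geometric resummation go through verbatim, and independence of the (non-unique) choice of embedded resolution follows a posteriori from the intrinsic definition of $Z_{mot,0}(f;T)$.
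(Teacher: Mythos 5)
The paper does not prove this statement; it is quoted from \cite{goulw1} (Proposition 4.2), where it is established by exactly the Denef--Loeser change-of-variables computation you outline: lifting real arcs through $\sigma$, applying the jet-level fibration lemma of Kontsevich--Denef--Loeser (valid over $\mathbb{R}$ since $X$ is smooth and the argument is Hensel/Taylor-expansion based), and resumming the geometric series in the contact orders $k_i$. Your argument is correct and matches that proof, with the foundational inputs (the fibration lemma and the $\mathbb{R}$-constructibility of the cylinders involved) correctly identified as the ingredients supplied by the cited reference.
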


\begin{rem}
\label{rem 9}
     The motivic zeta function of $f$ is therefore a rational function; more precisely, $Z_{mot,0}(f;T)$ belongs to $\mathcal{M}_{\mathbb{R}} [T][\frac{1}{1-\mathbb{L}^{-\nu_i}T^{N_i}}]_{i \in I}.$ In particular, the sequence of $[\mathcal{X}_n(f)]$ is determined by a finite amount of data.\\
    On the other hand, this also proves that the above expression does not depend on the chosen resolution, since $Z_{mot,0}(f;T)$ was defined intrinsically in \ref{def 3}.
\end{rem}

To express the rationality of zeta functions with signs, we must introduce coverings $\widetilde{E}_I^{0,\pm}$ of the strata $E_I^0$ as follows. Every point of $E_I^0$ has an open affine neighborhood $U$ on which $f\circ \sigma = u \prod\limits_{i \in I} y_i^{N_i}$ where $u$ is a unit. We then define the sets 
$$R_U^{\pm} = \{ ~(x,t) \in (E_I^0 \cap U)\times\mathbb{R} \mid t^mu(x) = \pm 1 ~\}$$
where $m=\text{gcd}(N_i)$. The variety $\widetilde{E}_I^{0,\pm}$ is obtained by gluing the $R_U^{\pm}$ along the open sets $E_I^0 \cap U$. One then has a covering $\widetilde{E}_I^{0,\pm} \to E_I^0$ which is locally trivial for the Euclidean topology.
For simplicity, we denote by $\widetilde{E}_I^{0,\pm} \cap \sigma^{-1}(0)$ the restriction of the covering $\widetilde{E}_I^{0,\pm} \to E_I^0$ above $E_I^0 \cap \sigma^{-1}(0)$.

\begin{thm}[\cite{goulw1} Proposition 4.4]
    Let $ \sigma : (X,\sigma^{-1}(0)) \to (\mathbb{R}^d,0)$ be an embedded resolution of $f$. One has
    $$Z_{mot,0}^{\pm}(f;T) = \sum\limits_{ \emptyset \neq I \subset J} (\mathbb{L}-1)^{|I|-1} [\widetilde{E}_I^{0,\pm} \cap \sigma^{-1}(0)] \underset{i \in I}{\prod}\frac{\mathbb{L}^{-\nu_i}T^{N_i}}{1-\mathbb{L}^{-\nu_i}T^{N_i}}.$$
\end{thm}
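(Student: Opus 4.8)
The plan is to run the same argument that yields the naive formula of Theorem~\ref{thm 2}, carrying along in addition the leading coefficient of $f\circ\gamma$. First I would lift arcs: since $\sigma$ is proper and birational and only arcs $\gamma$ with $\mathrm{ord}_t(f\circ\gamma)=n<\infty$ are relevant, each such $\gamma$ has a unique lift $\tilde\gamma$ to $X$ with $\tilde\gamma(0)\in\sigma^{-1}(0)$, and a unique stratum $E_I^0$ contains $\tilde\gamma(0)$; this partitions $\mathcal{X}_n^\pm(f)$ according to $I$. On the piece indexed by $I$ one would work in a local chart $U$ about a point of $E_I^0\cap\sigma^{-1}(0)$ with coordinates $(y_1,\dots,y_d)$ such that $f\circ\sigma=u\prod_{i\in I}y_i^{N_i}$ and $\mathrm{jac}\,\sigma=v\prod_{i\in I}y_i^{\nu_i-1}$ with $u,v$ units. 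Writing $y_i\circ\tilde\gamma(t)=c_it^{k_i}+\cdots$ with $c_i\in\mathbb{R}^*$ for $i\in I$, the condition $\mathrm{ord}_t(f\circ\gamma)=n$ becomes $\sum_{i\in I}N_ik_i=n$, exactly as in the naive case.

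The only new input is the sign condition. The leading coefficient of $f\circ\gamma$ equals $u(\tilde\gamma(0))\prod_{i\in I}c_i^{N_i}$, so $(f\circ\gamma)(t)=\pm t^n+o(t^{n+1})$ amounts to $u(\tilde\gamma(0))\prod_{i\in I}c_i^{N_i}=\pm1$. Thus, whereas in the proof of Theorem~\ref{thm 2} the pair $(\text{base point},(c_i)_{i\in I})$ ranges freely over $(E_I^0\cap\sigma^{-1}(0))\times(\mathbb{R}^*)^{|I|}$ and contributes the factor $(\mathbb{L}-1)^{|I|}[E_I^0\cap\sigma^{-1}(0)]$, in the signed case it ranges over
$$V_I^{\pm}:=\Bigl\{\,(x,(c_i)_{i\in I})\in (E_I^0\cap\sigma^{-1}(0))\times(\mathbb{R}^*)^{|I|}\ \Big|\ u(x)\prod_{i\in I}c_i^{N_i}=\pm1\,\Bigr\}.$$
Everything else in the bookkeeping --- the higher-order coefficients of $\tilde\gamma$, the Jacobian factor $\mathbb{L}^{-\sum_{i\in I}(\nu_i-1)k_i}$ produced by the Denef--Loeser change-of-variables formula applied to $\sigma$, the normalisation $\mathbb{L}^{-nd}$, and the summation over $k_i\ge 1$ and over $n$ of the resulting geometric series --- is word for word that of the naive case and outputs $\prod_{i\in I}\frac{\mathbb{L}^{-\nu_i}T^{N_i}}{1-\mathbb{L}^{-\nu_i}T^{N_i}}$. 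Hence the statement reduces to showing $[V_I^{\pm}]=(\mathbb{L}-1)^{|I|-1}[\widetilde E_I^{0,\pm}\cap\sigma^{-1}(0)]$ and summing over $I$.

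To compute $[V_I^{\pm}]$ I would set $m=\gcd(N_i\mid i\in I)$, write $N_i=ma_i$ so that $\gcd(a_i\mid i\in I)=1$, and use $\prod_{i\in I}c_i^{N_i}=\bigl(\prod_{i\in I}c_i^{a_i}\bigr)^{m}$. Since $(a_i)_{i\in I}$ is primitive it extends to a $\mathbb{Z}$-basis of $\mathbb{Z}^{|I|}$; the corresponding monomial automorphism $\phi$ of $(\mathbb{R}^*)^{|I|}$, with first coordinate $d_1=\prod_ic_i^{a_i}$, then identifies $V_I^{\pm}$ with
$$\bigl\{\,(x,d_1)\in (E_I^0\cap\sigma^{-1}(0))\times\mathbb{R}^*\ \big|\ u(x)\,d_1^{m}=\pm1\,\bigr\}\ \times\ (\mathbb{R}^*)^{|I|-1}.$$
The first factor is exactly the restriction over $E_I^0\cap\sigma^{-1}(0)$ of the chart-$U$ piece of the covering $\widetilde E_I^{0,\pm}\to E_I^0$ (it is the set $R_U^{\pm}$ of the definition, with $t=d_1$). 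Since the product $u(\tilde\gamma(0))\prod c_i^{N_i}$ is the intrinsic leading coefficient of $f\circ\gamma$, the sets $V_I^{\pm}$ glue; passing to a cover of $E_I^0$ by such charts gives $[V_I^{\pm}]=(\mathbb{L}-1)^{|I|-1}[\widetilde E_I^{0,\pm}\cap\sigma^{-1}(0)]$, and summing over all strata $E_I^0$ produces the asserted formula.

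The main obstacle is the one already present in Theorem~\ref{thm 2}: making the change of variables in the truncated arc spaces rigorous over $\mathrm{K}_0(\mathbb{R}\mathrm{Var})$ --- the real-algebraic analogue of Denef--Loeser's fibration lemma for $n$-jets --- and keeping track of the order at which one must pass to jets on $X$ before projecting down to jets on $\mathbb{R}^d$; I would simply borrow this from the proof of the naive formula. Granting that, the only genuinely new point is the computation of $[V_I^{\pm}]$ above, where the care needed is to check that the local identifications with $R_U^{\pm}\times(\mathbb{R}^*)^{|I|-1}$ are compatible on overlaps --- i.e. that the ambiguity in $u$ and in the chosen $m$-th root matches the monomial coordinate changes relating the $y_i$ on overlapping charts --- so that the local classes indeed assemble into $[\widetilde E_I^{0,\pm}\cap\sigma^{-1}(0)]$.
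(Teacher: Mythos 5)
The paper does not prove this statement itself---it is quoted from \cite{goulw1} (Proposition 4.4)---and your sketch reproduces exactly the standard argument given there: run the Denef--Loeser change-of-variables computation of the naive case while tracking the leading coefficient $u(\tilde\gamma(0))\prod_{i\in I}c_i^{N_i}$, and reduce the class of the constraint locus to $(\mathbb{L}-1)^{|I|-1}[\widetilde E_I^{0,\pm}\cap\sigma^{-1}(0)]$ via a unimodular monomial change of coordinates on the torus $(\mathbb{R}^*)^{|I|}$ using $m=\gcd(N_i)$, recovering precisely the sets $R_U^{\pm}$ from the definition of the cover. This is correct and essentially the same approach as the cited source.
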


One of the main motivations for studying these zeta functions in real geometry is that they provide invariants for the classification of Nash function germs under blow-Nash equivalence.

\begin{thm}[\cite{goulw1} Theorem 4.9.]
    \label{thm 1}
    The functions $Z_{\beta,0}(f;T)$ and $Z_{\beta,0}^{\pm}(f;T)$ are invariants for the blow-Nash equivalence. 
\end{thm}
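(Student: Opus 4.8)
The plan is to reduce the statement to the Denef--Loeser formula of Theorem~\ref{thm 2} and its sign analogue, by showing that a blow-Nash equivalence between $f$ and $g$ can be promoted to an isomorphism of embedded resolutions preserving every piece of data entering those formulas.

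First I would unwind the definition: $f$ and $g$ are blow-Nash equivalent when there are Nash modifications $\pi_f : (M_f,\pi_f^{-1}(0)) \to (\mathbb{R}^d,0)$ and $\pi_g : (M_g,\pi_g^{-1}(0)) \to (\mathbb{R}^d,0)$, each a finite composition of blow-ups along smooth Nash centers, and a Nash isomorphism $\Phi : M_f \to M_g$ with $f\circ\pi_f = g\circ\pi_g\circ\Phi$, $\Phi(\pi_f^{-1}(0)) = \pi_g^{-1}(0)$, and $\operatorname{jac}\pi_f = (\operatorname{jac}\pi_g\circ\Phi)\cdot\operatorname{jac}\Phi\cdot v$ for some Nash unit $v$. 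Since $f\circ\pi_f$ and $g\circ\pi_g\circ\Phi$ are literally the same Nash function on $M_f$, any further sequence of blow-ups of $M_f$ along smooth Nash centers $C_1,C_2,\dots$ can be transported through $\Phi$ to blow-ups of $M_g$ along $\Phi(C_1),\Phi(C_2),\dots$, and $\Phi$ lifts at each stage. Applying Hironaka's embedded resolution (in the Nash category) to $f\circ\pi_f$, I may therefore assume that $\pi_f$ is already an embedded resolution of $f$ in the sense of Notation~\ref{not 1}; the transported $\pi_g$ is then automatically an embedded resolution of $g$ (it resolves $g\circ\pi_g\circ\Phi = f\circ\pi_f$, hence $g\circ\pi_g$, and the Jacobian relation is inherited with a new unit), and $\Phi$ still intertwines $\pi_f$ and $\pi_g$.

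Under this normalization $\Phi$ is an isomorphism between the normal crossing divisors $(f\circ\pi_f)^{-1}(0) = \bigcup_{j\in J}E_j$ and $(g\circ\pi_g)^{-1}(0) = \bigcup_{j\in J'}E'_j$ taking $\pi_f^{-1}(0)$ onto $\pi_g^{-1}(0)$. It therefore induces a bijection $\rho : J \to J'$ with $\Phi(E_j) = E'_{\rho(j)}$; comparing $f\circ\pi_f$ with $g\circ\pi_g\circ\Phi$ gives $N_j = N'_{\rho(j)}$, and the Jacobian relation gives $\nu_j = \nu'_{\rho(j)}$. Moreover $\Phi$ restricts, for every $\emptyset\neq I\subset J$, to a Nash isomorphism $E_I^{0}\cap\pi_f^{-1}(0)\;\xrightarrow{\ \sim\ }\;E^{\prime 0}_{\rho(I)}\cap\pi_g^{-1}(0)$, and, since the coverings $\widetilde{E}_I^{0,\pm}$ depend only on the divisor together with the unit $u$ in a local expression $f\circ\pi_f = u\prod_{i\in I}y_i^{N_i}$ --- a datum preserved on the nose by $\Phi$, signs included --- it lifts to an isomorphism $\widetilde{E}_I^{0,\pm}\cap\pi_f^{-1}(0)\;\xrightarrow{\ \sim\ }\;\widetilde{E}^{\prime 0,\pm}_{\rho(I)}\cap\pi_g^{-1}(0)$.

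It then remains to write the Denef--Loeser formula for $f$ via $\pi_f$ and for $g$ via $\pi_g$ and apply the virtual Poincaré polynomial $\beta$: as a ring morphism it sends $\mathbb{L}$ to $u$ and gives equal polynomials on isomorphic varieties, so the bijection $\rho$ matches the two expressions term by term, yielding $Z_{\beta,0}(f;T)=Z_{\beta,0}(g;T)$; the same with the sign analogue of the formula yields $Z_{\beta,0}^{\pm}(f;T)=Z_{\beta,0}^{\pm}(g;T)$. I expect the delicate point to be the normalization step of the second paragraph --- carrying out the resolution of $f\circ\pi_f$ through Nash-smooth centers while keeping $\Phi$ liftable, and transferring the Jacobian relation along the way --- together with the category issue that $\beta$ as stated lives on $\mathrm{K}_0(\mathbb{R}\mathrm{Var})$ whereas the strata and their sign coverings coming from a genuinely Nash equivalence are arc-symmetric sets, so that one should really work in $\mathrm{K}_0(\mathcal{AS})$ and use the extension of $\beta$ to that ring.
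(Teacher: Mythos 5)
The paper does not prove this statement itself---it is quoted from \cite{goulw1} (Theorem 4.9)---but your argument via the Denef--Loeser formulas, matching the strata $E_I^0$, their sign coverings $\widetilde{E}_I^{0,\pm}$, and the numerical data $(N_i,\nu_i)$ under the blow-Nash isomorphism $\Phi$, is exactly the strategy of Fichou's proof. The one simplification available to you is that in the definition of blow-Nash equivalence the Nash modifications are already required to make $f\circ\pi_f$ and the Jacobian simultaneously normal crossings, and $\Phi$ is required by definition to preserve the Jacobian multiplicities (this does not follow from a chain rule, since the induced map downstairs is only a homeomorphism), so the normalization step you flag as delicate is built into the hypotheses rather than something to be established; your closing caveat about working in $\mathrm{K}_0(\mathcal{AS})$ rather than $\mathrm{K}_0(\mathbb{R}\mathrm{Var})$ is the correct one.
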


For instance, the zeta functions $Z_{\beta,0}$ and $Z_{\beta,0}^{\pm}$ defined via the virtual Poincaré polynomial are used in \cite {goulw1} to classify Brieskorn polynomials in two variables and in \cite{goulw3} to classify simple singularity germs for the blow-Nash equivalence. In a similar way, motivic zeta functions are used in \cite{campesato2} to classify Brieskorn polynomials in any variables up to blow-Nash equivalence (which coincide with the arc-analytic equivalence).

\begin{rem}
    \label{rem 3}
    From now on, we always begin by taking an embedded resolution $\sigma:(X,\sigma^{-1}(0)) \to (\mathbb{A}^{d}_{\mathbb{R}},0)$ in the schematic sense. We can then work in the category of real algebraic varieties by considering the induced morphism on real points $ (X(\mathbb{R}),\sigma^{-1}(0)(\mathbb{R})) \to (\mathbb{R}^d,0) $. To emphasize this, we will write
    $$Z_{mot,0}(f;T) = \sum\limits_{\emptyset \neq I \subset J} (\mathbb{L}-1)^ {|I|} [E^{0}_I \cap \sigma^{-1}(0)(\mathbb{R})] \underset{i \in I}{\prod}\frac{\mathbb{L}^{-\nu_i}T^{N_i}}{1-\mathbb{L}^{-\nu_i}T^{N_i}}.$$
    This schematic viewpoint will be necessary later when computing intersection numbers of real algebraic curves, taking into account both real and complex points.
    Note that the motivic zeta function of the complexification $f_{\mathbb{C}}$ is then given by 
    $$Z_{mot,0}(f_{\mathbb{C}};T) = \sum\limits_{\emptyset \neq I \subset J} (\mathbb{L}-1)^ {|I|} [E^{0}_{I} \cap \sigma^{-1}(0)(\mathbb{C)}] \underset{i \in I}{\prod}\frac{\mathbb{L}^{-\nu_i}T^{N_i}}{1-\mathbb{L}^{-\nu_i}T^{N_i}}.$$
\end{rem}

\begin{defi}[Poles of zeta functions]
\label{def 6}
    Let us consider the zeta function $Z_{\beta,0}$ defined at the level of the virtual Poincaré polynomial. According to the rationality formula, one can write $Z_{\beta,0}(f;T)=\frac{P(T)}{Q(T)}$ where $P(T),Q(T) \in \mathbb{Z}[u,u^{-1}][T]$. The ring $\mathbb{Z}[u,u^{-1}]$ is a subring of $\underset{k \geq 1}{\bigcup}\mathbb{Z}[u^{\frac{1}{k}}, u^{-\frac{1}{k}}]$, so that for any $s \in \mathbb{Q}$, the zeta function $Z_{\beta,0} \in \underset{k \geq 1}{\bigcup}\mathbb{Z}[u^{\frac{1}{k}},u^{-\frac{1}{k}}][T] $ can be evaluated at $T=u^{-s}$, which is natural by analogy with Igusa zeta functions. This yields

     $$Z_{\beta,0}(f;u^{-s}) = \sum\limits_{\emptyset \neq I \subset J} (u-1)^ {|I|} \beta(E^{0}_I \cap \sigma^{-1}(0)(\mathbb{R}) \underset{i \in I}{\prod}\frac{u^{-(\nu_i+sN_i)}}{1-u^{-(\nu_i+sN_i)}}$$
     which can also be written as
     $$Z_{\beta,0}(f;u^{-s}) = \sum\limits_{\emptyset \neq I \subset J} \beta(E^{0}_I \cap \sigma^{-1}(0)(\mathbb{R}) \underset{i \in I}{\prod}\frac{u-1}{u^{(\nu_i+sN_i)}-1}.$$

    We say that $s_0 \in \mathbb{Q}$ is a pole of $Z_{\beta,0}(f;u^{-s})$ if and only if $u^{-s_0}$ is a pole of $Z_{\beta,0}(f;T)$. Equivalently, $s_0 \in\mathbb{Q}$ is a pole of $Z_{\beta,0}(f;u^{-s})$ if and only if there exist $P,Q \in \mathbb{Z}[u,u^{-1}] [T]$ such that $Z_{\beta,0}(f;T)=\frac{P(T)}{Q(T)}$ and such that $P(u^{-s_0}) \neq 0$ and $Q(u^{-s_0})=0$. 
    The definition of poles for motivic zeta functions is more subtle, due to the fact that it is not known whether the ring $\mathcal{M}_{\mathbb{R}}$ is a domain. For instance, it is known that the rings K$_0(\text{Var}_\mathbb{C})$ and K$_0(\mathbb{R}\text{Var})$ are not integral domains (see \cite{Poonen_2002}, \cite{goulw5}). For a precise definition of poles in this context, we refer to section 4 of \cite{rodrigues3}. We will simply note that, since $Z_{\beta,0}(f,u^{-s})$ is a specialization of $Z_{mot,0}(f,\mathbb{L}^{-s})$, any pole of $Z_{\beta,0}(f,u^{-s})$ is also a pole of $Z_{mot,0}(f,\mathbb{L}^{-s})$. The poles of zeta functions with signs are defined in a completely analogous way.
\end{defi}

Our study of the poles of real motivic zeta functions is motivated on the one hand by the fact that the set of these poles constitutes an invariant for the blow-Nash equivalence and, on the other hand, by the fact that in the complex setting, these poles have (at least conjecturally) a significant topological interpretation.

\begin{ex}
    Take $f=x^3+y^3$. The blowing-up at the origin gives an embedded resolution for $f$, and one finds
    $$Z_{\beta,0}(f;T) = u(u-1)\frac{u^{-2}T^3}{1-u^{-2} T^3} + (u-1)^2\frac{u^{-2}T^3}{1-u^{-2}T^3}\frac{u^{-1}T}{1-u^{-1}T}$$
    so the candidate poles are $-1$ and $-\frac{2}{3}$.
    After simplification, one has 
    $$Z_{\beta,0}(f;u^{-s}) = \frac{(u-1)u^{-(2+3s)}(u-u^{-(s+1)})}{(1-u^{-(2+3s)})(1-u^{-(s+1)})}.$$
    Evaluating the numerator at $s=-1$ and $s= -\frac{2}{3}$ gives $(u-1)^2u$ and $(u-1)(u-u^{-\frac{1}{3}})$ respectively, both of which are nonzero. Therefore, the poles of $Z_{\beta,0}(f;u^{-s})$ are indeed $-1$ and $-\frac{2}{3}$.
\end{ex}

Let $\sigma : (X,\sigma^{-1}(0)) \to (\mathbb{A}^d_{\mathbb{R}},0)$ be an embedded resolution of $f$. By analogy with the complex case, it is natural to associate to $f$ a so-called real topological zeta function $Z_{top,0}(f;s)$, which will be an element of $\mathbb{Q}(s)$. 

\begin{defi}
\label{def 4}
    We denote by $\mu : \text{K}_0(\mathbb{R}\text{Var}) \to \mathbb{Z}$ the additive invariant defined as the composition of the virtual Poincaré polynomial $\beta: \text{K}_0(\mathbb{R}\text{Var}) \to \mathbb{Z}[u]$
    with the evaluation map $\mathbb{Z}[u] \to \mathbb{Z} $ sending $u$ to $1$. 
\end{defi}

\begin{rem}
    Let us note that $\mu$ is neither the Euler characteristic, which is not an additive invariant of real algebraic varieties, nor the compactly supported Euler characteristic, which is the obtained by composing $\beta$ with the evaluation map at $-1$. In particular, $\mu$ is not a topological invariant.
\end{rem}

\begin{defi}
\label{def 8}
    We define the real local topological zeta function of $f$ by
    $$Z_{top,0}(f;s)= \sum\limits_{ \emptyset \neq I \subset J} \mu(E^{0}_I \cap \sigma^{-1}(0)(\mathbb{R})) \underset{i \in I}{\prod}\frac{1}{\nu_i+sN_i} \in \mathbb{Q}(s).$$
    The topological zeta functions with signs are defined by
    $$Z_{top,0}^{\pm}(f;s)= \sum\limits_{ \emptyset \neq I \subset J} \mu(\widetilde{E}_I^{0,\pm} \cap \sigma^{-1}(0)(\mathbb{R})) \underset{i \in I}{\prod}\frac{1}{\nu_i+sN_i} .$$
\end{defi}

\begin{rem}
\label{rem 5}

    The real topological zeta function can be defined as 
    $$Z_{top,0}(f;s) = \lim_{u\to 1} Z_{\beta,0}(f;u^{-s})$$ where a first-order expansion in the expression of \ref{def 6} shows that terms of the form $\frac{u-1}{u^{\nu_i +sN_i}-1}$ tend to $\frac{1}{\nu_i +sN_i}$ as $u$ approaches $1$. In particular, $Z_{top,0}(f;s)$ is a specialization of the zeta function defined at the level of the virtual Poincaré polynomial. It follows that $Z_{top,0}(f;s)$ satisfies the following properties: \\
    $\bullet$ The function $Z_{top,0}(f;s)$ is well defined, i.e., it does not depend on the chosen resolution, since $Z_{\beta,0}(f;u^{-s})$ is defined intrinsically as in \ref{def 3}.\\
    $\bullet$ The function $Z_{top,0}(f;s)$ is an invariant of the blow-Nash equivalence.\\
    $\bullet$ One has the following inclusions
    $$\text{Poles}(Z_{top,0}(f;s)) \subset \text{Poles}(Z_{\beta,0}(f;u^{-s})) \subset \text{Poles}(Z_{mot,0}(f;\mathbb{L}^{-s})).$$
    
    According to \ref{def 6}, the poles of $Z_{\beta,0}^{\pm}(f;u^{-s})$ are the same as the poles of $(u-1)Z_{\beta,0}^{\pm}(f;u^{-s})$. We can also define the topological zeta functions with signs as
    $$Z_{top,0}^{\pm}(f;s) = \underset{u\to 1}{\text{lim } }(u-1) Z_{\beta,0}^{\pm}(f;u^{-s})$$
    and it follows that the properties mentioned above also hold for topological zeta functions with signs. We will see that the above inclusions for the poles of naive zeta function are in fact equalities in the case of curves whereas the same inclusions can be strict for zeta function with signs, even in the case of curves.
    Finally, let us mention that, as in the complex case, the name “topological zeta function” may not be entirely appropriate, since $Z_{top,0}$ is not a topological invariant. Moreover, these functions are different from the topological zeta functions studied by Koike and Parusiński in \cite{koike1}.

\end{rem}

\begin{ex}
\label{ex 4}
    Take $f=y^2-x^3$. By performing three successive blowings-ups, one obtains an embedded resolution of $f$, whose resolution graph with numerical data $E_i(\nu_i,N_i)$ is shown below.
    \begin{figure}[H]
        \centering
        \includegraphics[scale=0.35]{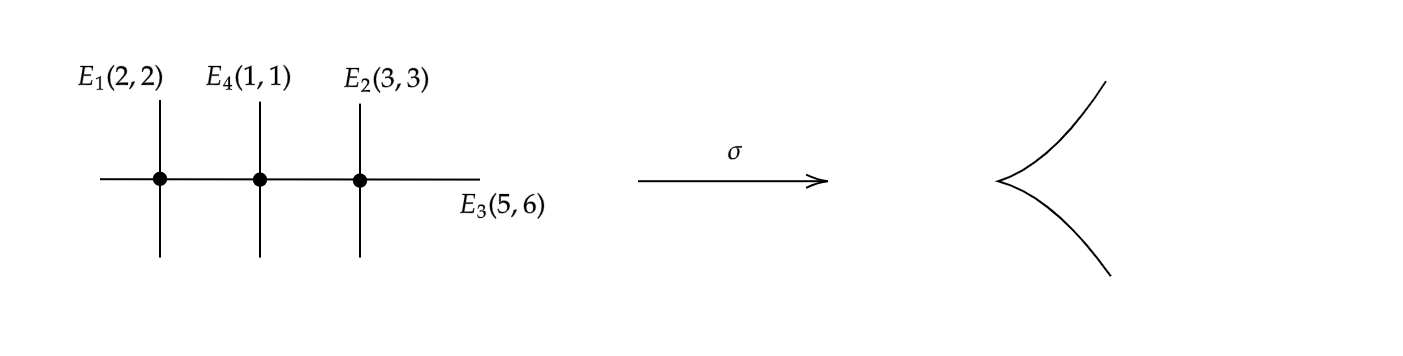}
    \end{figure}
    It follows that 
    $$Z_{top,0}(f;s) = \frac{1}{2+2s} + \frac{1} {3+3s} - \frac{1}{5+6s} + \frac{1}{(2+2s)(5+6s)} + \frac{1}{(3+3s)(5+6s)} + \frac{1}{(1+s)(5+6s)}$$
    and after simplification one has
    $$Z_{top,0}(f;s) = \frac{5+4s}{(s+1)(5+6s)}.$$
    Therefore, the poles of $Z_{top,0}(f;s)$ are $-1$ and $-\frac{5}{6}$.
\end{ex}

\begin{rem}
    
    More generally, if $f :(\mathbb{R}^d,0) \to (\mathbb{R},0)$ is a Nash function germ, one can also associate to $f$ a real topological zeta function, as well as topological zeta functions with signs. Indeed, by Corollary $2.4$ of \cite{goulw1}, there exists a unique morphism $\beta :\text{K}_0(\mathcal{AS}) \to \mathbb{Z}[u] $ that extends the virtual Poincaré polynomial to the Grothendieck ring of arc-symmetric sets \cite{kurdyka}. So one also has zeta functions $Z_{\beta,0}(f;T), ~Z_{\beta,0}^{\pm}(f;T) \in \mathbb{Z}[u,u^{-1}](T)$, and one can define similarly 
$$Z_{top,0}(f;s) = \lim_{u\to 1} Z_{\beta,0}(f;u^{-s}) ~~~\text{  and  } ~~~Z_{top,0}^{\pm}(f;s) = \underset{u\to 1}{\text{lim } }(u-1) Z_{\beta,0}^{\pm}(f;u^{-s}).$$
As in \ref{def 8}, one can also express or define $Z_{top,0}(f;s)$ and $Z_{top,0}^{\pm}(f;s)$ in terms of a Nash modification $\sigma :(X,\sigma^{-1}(0)) \to (\mathbb{R}^d,0)$ such that $\sigma^*(\text{div}(f))$ and $\sigma^*(dx_1 \wedge \dots \wedge dx_d)$ are simultaneously normal crossings.

\end{rem}

\section{Poles of the real naive zeta function for curves}

\label{section 2}

In this section, we provide a complete description of the poles of real naive zeta functions when $f \in \mathbb{R}[x,y]$, as is done in \cite{veys1} for the topological zeta function in the complex case. The goal is to give an analogous numerical criterion to identify the true poles from the resolution graph of the \textit{canonical embedded resolution} of $f$.\\
We will first study the contribution of a single component $E_i$ for a given candidate pole. Then, using the real total dual graph of the resolution, we will show that the nonzero contributions coming from different components do not cancel each other out. This section essentially involves adapting Veys' results to the real setting.

\begin{rem}
    \label{rem 4} For now take $f\in \mathbb{R}[x_1,\dots,x_d]$ and $\sigma : (X,\sigma^{-1}(0)) \to (\mathbb{A}^d_{\mathbb{R}},0)$ an embedded resolution of $f$. Let $J_{\mathbb{R}}$ denote the set of components whose real locus is non-empty, i.e.,
    $$J_{\mathbb{R}} = \{~ j \in J \mid E_{j}\ (\mathbb{R}) \neq \emptyset ~\}$$
    and let $I \subset J$. Assume that there exists $i \in I$ such that $i \notin J_{\mathbb{R}}$. Then
    $$E_I^0 \cap \sigma^{-1}(0)(\mathbb{R}) \subset E_i(\mathbb{R}) = \emptyset$$ 
    so $\beta(E_I^0 \cap \sigma^{-1}(0)(\mathbb{R})) = 0$ and a fortiori $\mu(E_I^0 \cap \sigma^{-1}(0)(\mathbb{R})) = 0$. In other words, we can also write 
    $$Z_{top,0}(f;s)= \sum\limits_{ \emptyset \neq I \subset J_{\mathbb{R}}} \mu(E^{0}_I \cap \sigma^{-1}(0)(\mathbb{R})) \underset{i \in I}{\prod}\frac{1}{\nu_i+sN_i}.$$
    This implies that the set of candidate poles of $Z_{top,0}(f;s)$, and thus the set of candidate poles of $Z_{mot,0}(f;\mathbb{L}^{-s})$), is  
    $$\{~ -\frac{\nu_i}{N_i} \mid  i \in J_{\mathbb{R}}~\}.$$
    Note also that the set of candidate poles of $Z_{mot,0}(f;\mathbb{L}^{-s})$ is always included in the set $\{~ -\frac{\nu_i}{N_i} \mid  i \in J ~\} $  of candidate poles of $Z_{mot,0}(f_{\mathbb{C}};\mathbb{L}^{-s})$.
\end{rem}

\begin{ex}
\label{ex 1}\begin{enumerate}
    \item The set of poles of $Z_{top,0}(f;s)$ is generally different from the set of poles of $Z_{top,0}(f_{\mathbb{C}};s)$. For example, if $f=x^{2k}+y^{2k}$ with $k \geq 2$, blowing up the origin gives an embedded resolution of $f$ where the strict transform of $f$ has no real points. Therefore, 
    $$Z_{top,0}(f;s)=\frac{2}{2+2ks}$$ and the unique pole of $Z_{top,0}(f;s)$ is $-\frac{1}{k}$. On the other hand, for the complexification $f_{\mathbb{C}}$, the strict transform is the union of $2k$ complex lines, so that
    $$Z_{top,0 }(f_{\mathbb{C}};s)= \frac{2-2k}{2+2ks}+\frac{2k}{(1+s)(2+2ks)} = \frac{2+2s-2ks}{(s+1)(2+2ks)}$$ 
    which has poles $-1$ and $-\frac{1}{k}$. Note also that $-\frac{1}{k}$ is a common pole of $Z_{top,0}(f;s)$ and $Z_{top,0}(f_{\mathbb{C}};s)$, but that the residues of these two functions at this pole are not equal.
    \item It may happen that $Z_{top,0}(f;s)$ and $Z_{top,0}(f_{\mathbb{C}};s)$ have a pole in common, but that the order of this pole differs between the two functions. For example, when $f=x^2+y^2$, one has
    $$Z_{top}(f;s)=\frac{2}{2+2s}=\frac{1}{1+s}$$
    while 
    $$Z_{top}(f_{\mathbb{C}};s)=\frac{2}{(2+2s)(1+s)}=\frac{1}{(1+s)^2}.$$
    \end{enumerate}
    
\end{ex}

\subsection{Study of a contribution}
\label{subsection 2.1}

 \text{ }\\
Let us briefly recall how any smooth algebraic surface $X$ defined over $\mathbb{R}$ can be equipped with a bilinear intersection form. We denote by Pic$(X)$ the Picard group of $X$, which can be identified with the group of divisors modulo linear equivalence, that is, modulo principal divisors.\\
If $C, C' \subset X$ are two algebraic curves defined over $\mathbb{R}$ and $p \in C \cap C'$, the intersection multiplicity of $C$ and $C'$ at $p$ is defined by
$$(C \cdot C')_p = \dim_{\mathbb{R}}  \frac{\mathcal{O}_{X, p}}{(g,h)}$$
where $g,h$ are local equations of $C$ and $C'$ in the neighborhood of $p$.
We will mainly use the fact that when $C$ and $C'$ intersect transversally at the point $p$, the intersection multiplicity $(C \cdot C')_p$ is equal to the dimension of the residue field of $p$ as a $\mathbb{R}$-vector space. In particular $(C \cdot C')_p = 1$ when $p$ is a real point and $(C \cdot C') = 2$ when $p$ is a complex point. If $C$ and $C'$ have no irreducible components in common, we then define the intersection number of $C$ and $C'$ by 
$$(C \cdot C') = \sum_{p \in C \cap C'} (C \cdot C')_p$$
By extension, this defines a symmetric bilinear intersection form 
\[\begin{array}[t]{lrcl}
 & \text{Pic}(X)\times \text{Pic}(X) & \longrightarrow & \mathbb{Z} \\
         & ([D],[D']) & \longmapsto & (D \cdot D')
\end{array}\]

From now on, we fix $f \in \mathbb{R}[x,y]$ and let $\sigma : (X,\sigma^{-1}(0)) \to (\mathbb{A}^2_{\mathbb{R}},0)$ be the canonical embedded resolution of $f$. As before, we denote by $\sum\limits_{j \in J} N_jE_j$ the principal divisor induced by $f \circ  
 \sigma$, where the $E_j$ are smooth irreducible curves on $X$ that intersect transversally. Let us note that $\sigma$ is the composition of a finite number of blowings-up, starting with the blowing-up of the origin, so that $\sigma^{-1}(0)$ is the union of the exceptional curves created by this sequence of blowings-up. Using the notation already introduced, we have seen that 
$$Z_{top,0}(f;s) = \sum\limits_{ \emptyset \neq I \subset J_{\mathbb{R}}} \mu(E^{0}_I \cap \sigma^{-1}(0)(\mathbb{R})) \underset{i \in I}{\prod}\frac{1}{\nu_i+sN_i}.$$
But since the $E_i$ are simultaneously normal crossings, we have $E_I^0 = \emptyset$ as soon as $|I| > 2$, so that
$$Z_{top,0}(f;s)= \sum\limits_{i \in J_{\mathbb{R}}} \frac{\mu(E^{0}_i \cap \sigma^{-1}(0)(\mathbb{R}))}{\nu_i+sN_i} +  \sum\limits_{\{i,j\} \subset J_{\mathbb{R}}} \frac{\mu(E_i(\mathbb{R})\cap E_j(\mathbb{R}))}{(\nu_i+sN_i)(\nu_j+sN_j)}.$$ 
Here $\beta(E_i(\mathbb{R})\cap E_j (\mathbb{R}))$, and a fortiori $\mu(E_i(\mathbb{R})\cap E_j (\mathbb{R}))$, is equal to the number of real intersection points of $E_i $ and $E_j$.

From the above expression, it already follows that any pole of $Z_{top,0}(f;s)$ has order at most 2.

\begin{prop}
\label{prop 1}
    Let $s_0 \in \mathbb{Q}$. Then $s_0$ is a pole of order $2$ of $Z_{top,0}(f;s)$ if and only if there exist distinct $i,j \in J_{\mathbb{R}}$ such that $E_i$ and $ E_j$ intersect at a real point and such that $s_0= -\frac{\nu_i}{N_i}=-\frac{\nu_j}{N_j}$.
\end{prop}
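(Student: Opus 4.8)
The plan is to read off the claim directly from the explicit two-term expression
$$Z_{top,0}(f;s)= \sum\limits_{i \in J_{\mathbb{R}}} \frac{\mu(E^{0}_i \cap \sigma^{-1}(0)(\mathbb{R}))}{\nu_i+sN_i} +  \sum\limits_{\{i,j\} \subset J_{\mathbb{R}}} \frac{\mu(E_i(\mathbb{R})\cap E_j(\mathbb{R}))}{(\nu_i+sN_i)(\nu_j+sN_j)}$$
derived just above, by analyzing when $s_0$ is a double zero of the denominator after reduction. First I would observe that the only factors appearing in any denominator are the linear forms $\nu_i + sN_i$ for $i \in J_{\mathbb{R}}$, each with a single root $-\nu_i/N_i$, so a pole of order $2$ at $s_0$ can only come from a term of the second sum in which \emph{both} $\nu_i + sN_i$ and $\nu_j + sN_j$ vanish at $s_0$; equivalently $s_0 = -\nu_i/N_i = -\nu_j/N_j$ for some pair $\{i,j\} \subset J_{\mathbb{R}}$, $i \neq j$. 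This already shows the condition $s_0 = -\nu_i/N_i = -\nu_j/N_j$ is necessary, and likewise the pair must have $\mu(E_i(\mathbb{R}) \cap E_j(\mathbb{R})) \neq 0$, i.e. $E_i$ and $E_j$ meet at a real point (here I use, as recalled in the excerpt, that $\mu(E_i(\mathbb{R}) \cap E_j(\mathbb{R}))$ equals the number of real intersection points of $E_i$ and $E_j$, so it is a nonnegative integer, and vanishes precisely when there are no such points).

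For the converse, suppose $i, j \in J_{\mathbb{R}}$ are distinct, $E_i$ and $E_j$ meet at a real point, and $s_0 = -\nu_i/N_i = -\nu_j/N_j$. I would write $Z_{top,0}(f;s) = A(s) + B(s)/\big((\nu_i+sN_i)(\nu_j+sN_j)\big)$ where $B(s) = \mu(E_i(\mathbb{R}) \cap E_j(\mathbb{R})) + (\nu_i+sN_i)(\nu_j+sN_j)\cdot(\text{other terms})$ collects the distinguished $\{i,j\}$-term together with the rest, so that $B(s_0) = \mu(E_i(\mathbb{R}) \cap E_j(\mathbb{R})) > 0$. Since $A(s)$ has at most a simple pole at $s_0$ (every denominator of $A$ contains at most one of $\nu_i + sN_i$, $\nu_j + sN_j$), it contributes at most a simple pole, and $(\nu_i+sN_i)(\nu_j+sN_j)$ has a genuine double zero at $s_0$, so $Z_{top,0}(f;s)$ has a pole of order exactly $2$ at $s_0$: the Laurent coefficient of $(s-s_0)^{-2}$ is a nonzero multiple of $\mu(E_i(\mathbb{R}) \cap E_j(\mathbb{R}))$ and cannot be cancelled by the lower-order contribution from $A$.

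The only genuinely delicate point — and the one I expect to be the main obstacle — is making the non-cancellation in the converse rigorous, i.e. verifying that the $(s-s_0)^{-2}$ coefficient is really nonzero rather than accidentally killed by summing several second-sum terms sharing the same double root. I would handle this by noting that for \emph{each} pair $\{i,j\}$ realizing the double root $s_0$, the corresponding term contributes a strictly positive quantity $\mu(E_i(\mathbb{R}) \cap E_j(\mathbb{R}))/(N_iN_j)$ (up to a common positive factor) to the $(s-s_0)^{-2}$ coefficient, and positivity of $N_i, N_j$ and of the intersection count means these contributions all have the same sign and hence add up without cancellation. Thus the order-$2$ pole survives, which completes the equivalence. (This positivity phenomenon is precisely what fails, and why more care is needed, in the signed case treated in Section~3.)
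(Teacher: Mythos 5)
Your proposal is correct and follows essentially the same route as the paper: the paper likewise computes $\lim_{s\to s_0}(s-s_0)^2 Z_{top,0}(f;s)$ from the two-term expression, observes that only the pairwise terms with $s_0=-\nu_i/N_i=-\nu_j/N_j$ survive, and concludes via the positivity of each contribution $\mu(E_i(\mathbb{R})\cap E_j(\mathbb{R}))/(N_iN_j)$ that no cancellation can occur. The non-cancellation point you flag as delicate is handled in the paper by exactly the same sign argument.
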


\begin{proof}
    According to the expression of $Z_{top}(f;s)$, one has 
    
    $$\underset{s\to s_0}{\text{lim}}(s-s_0)^2 Z_{top}(f;s) = \sum_{\{i,j\}\subset J_{\mathbb{R }}}   \underset{s\to s_0}{\text{lim}} (s-s_0)^2 \frac{\mu(E_i(\mathbb{R})\cap E_j(\mathbb{R}))}{(\nu_i+sN_i)(\nu_j+sN_j)}  $$ 
    If $s_0 \neq -\frac{\nu_i}{N_i}$ or $s_0 \neq -\frac{\nu_j}{N_j}$, then $\underset{s \to s_0}{\lim} (s-s_0)^2 \frac{\mu(E_i\cap E_j(\mathbb{R}))}{(\nu_i+sN_i)(\nu_j+sN_j)} = 0$. Now, when $i,j \in J$ are distinct and satisfy $s_0=-\frac{\nu_i}{N_i}=-\frac{\nu_j}{N_j}$, one has
    $$\underset{s\to s_0}{\text{lim}}(s-s_0)^2 \frac{\mu(E_i(\mathbb{R})\cap E_j(\mathbb{R}))}{(\nu_i+sN_i) (\nu_j+sN_j)} =\frac{\mu(E_i(\mathbb{R})\cap E_j(\mathbb{R}))}{N_iN_j} \geq 0$$
    where the inequality is strict if and only if $E_i$ and $E_j$ intersect at a real point. The result follows since $\underset{s\to s_0}{\text{lim}}(s-s_0)^2 Z_{top}(f;s)$ is the sum of these terms.
\end{proof}

\begin{rem}
\label{rem 11}
Let us consider two special cases within this remark. Assume that $f$ is already normal crossing, i.e., analytically equivalent to  $\lambda x^Ny^M$ for some $N,M \in \mathbb{N}$ and $\lambda \in \mathbb{R}^*$. Then
$$Z_{top,0}(f;s) = \frac{1}{(1+sN)(1+sM)} = Z_{top,0}(f_{\mathbb{C}};s)$$

Now assume that there exists an exceptional curve $E_i$ such that $(E_i \cdot \sum\limits_{j \neq i } E_j) = 2$ and such that $E_i$ does not intersect any component at a real point. This implies that the canonical embedded resolution of $f$ is obtained by a single blowing-up of the origin $\sigma : ( \text{Bl}_0 \mathbb{A}^2_{\mathbb{R}},E) \to (\mathbb{A}^2_{\mathbb{R}},0)$, which creates a unique exceptional curve $E_i =E$. Furthermore, the strict transform of $f$ is a smooth irreducible curve whose real locus is empty and which intersects $E$ at a complex point. On an affine chart $U \simeq \mathbb{A}^2_{\mathbb{R}}$ of $\text{Bl}_0 \mathbb{A}^2_{\mathbb{R}}$, one has $f(\sigma(x,y))= y^{N} u(x,y)$ 
with
$$E \cap U =\{~ y = 0 ~\} \simeq \mathbb{A}^1_{\mathbb{R}}$$
and $u(x,0) \in \mathbb{R}[x]$ does not vanish on $\mathbb{R}$ but has two complex conjugate roots. Therefore, one can write $u(x,0)=(ax^2+bx+c)^M$ where $b^2-4ac < 0$. We also know that the multiplicity of $E$ is equal to the algebraic multiplicity of $f$ at $0$, thus $N = 2 M$ and 
$$Z_{top,0}(f;s) = \frac{2}{2+Ns} = \frac{1}{1+sM}.$$
For the complexification $f_{\mathbb{C}}$, one finds
$$Z_{top,0}(f_\mathbb{C};s) = \frac{0}{2+Ns} + \frac{2}{(2+Ns)(1+Ms)} = \frac{1}{(1+sM)^2}$$
\end{rem}

Throughout the rest of this section, we will assume that $f$ is not as in the above remark. This means that for all $i\in J_{\mathbb{R}}$, one has $(E_i \cdot \sum\limits_{j \neq i } E_j) \geq 1$, and that if $(E_i \cdot \sum\limits_{j \neq i} E_j) =2$, then $E_i$ intersects the other components at two real points.

\begin{notation}
\label{nota 2}
Let us fix $i \in J_{\mathbb{R}}$ and study the contribution of a component $E_i$ to the residue of $Z_{top,0}(f;s)$ at the candidate pole $s_0=-\frac{\nu_i}{N_i}$. Assume that for all $j\in J_{\mathbb{R}}\setminus \{i\}$ we have $\frac{\nu_i}{N_i} \neq \frac{\nu_j}{N_j}$ as soon as $E_i$ and $E_j$ intersect at a real point, otherwise, we saw above that $s_0$ is a pole of order 2. We then truncate the function $Z_{top,0}(f;s)$ by keeping only the terms 
$$\frac{\mu(E_i^0 \cap \sigma^{-1}(0)(\mathbb{R}))}{\nu_i +sN_i} + \sum\limits_{j \neq i } \frac{\mu(E_i(\mathbb{R}) \cap E_j(\mathbb{R}))}{(\nu_i+sN_i) (\nu_j+sN_j)}.$$ 
We denote by $\mathcal{R}_{top,i}$ the residue of this expression at $s_0$. Therefore,
$$\mathcal{R}_{top,i} = \frac{1}{N_i}\left(\mu(E_i^0 \cap \sigma^{-1}(0)(\mathbb{R}))+\sum\limits_{j} \frac{\mu(E_i(\mathbb{R}) \cap E_j(\mathbb{R}))}{\alpha_j}\right)$$
where the sum runs over all $j \in J_{\mathbb{R}}\setminus\{i\}$ such that $E_i(\mathbb{R}) \cap E_j(\mathbb{R}) \neq \emptyset$, and where we set $\alpha_j = \nu_j-\frac{\nu_i}{N_i}N_j \in \mathbb{Q}^*$.
The residue of $Z_{top,0}(f;s)$ at $s_0$ is then
$$\text{Res}(Z_{top,0};s_0) = \sum\limits_{i} \mathcal{R}_{top,i}$$
where the sum now runs over the $i \in J_{\mathbb{R}}$ such that $s_0=-\frac{\nu_i}{N_i}$.
\end{notation}

If $E_i$ is an irreducible component of the strict transform, then $E_i^0 \cap \sigma^{-1}(0) (\mathbb{R}) = \emptyset$, since $\sigma^{-1}(0)$ is the union of the exceptional curves. Therefore,
$$\mathcal{R}_{top,i} = \frac{1}{N_i}\sum\limits_{j} \frac{\mu(E_i(\mathbb{R}) \cap E_j(\mathbb{R}))}{\alpha_j}$$
where the sum runs over $j \in J_{\mathbb{R}}$ such that $E_i(\mathbb{R}) \cap E_j(\mathbb{R}) \neq \emptyset$.\\
Now assume that $E_i$ is an exceptional curve. Suppose that $E_i$ intersects at $k$ real points the components $E_1,\dots,E_k$  and at $r$ complex points the components $E_{k+1},\dots,E_{k+r}$, so that $(E_i \cdot \sum\limits_{j \neq i } E_j) = k + 2r$.
One has $E_i^0 \cap \sigma^{-1}(0) (\mathbb{R}) = E_i^0(\mathbb{R}) \simeq \mathbb{P}^1(\mathbb{R}) \setminus \{ k \text{ points}\}$ so that $\beta (E_i^0\cap \sigma^{-1}(0)(\mathbb{R})) = u+1-k$ and $\mu(E_i^0\cap \sigma^{-1}(0)(\mathbb{R})) = 2-k$.
The contribution of $E_i$ to the residue of $Z_{top}(f;s)$ at $s_0$ is therefore
$$\mathcal{R}_{top,i}=\frac{1}{N_i}(2-k+\sum_{j=1}^k \frac{1}{\alpha_j}).$$

\begin{rem}
Let us consider the zeta function defined at the level of the virtual Poincaré polynomial which can be written as

$$Z_{\beta,0}(f;u^{-s}) = \sum\limits_{i \in J_{\mathbb{R}}} \frac{\beta(E^{0}_i \cap \sigma^{-1}(0)(\mathbb{R}))(u-1)}{u^{(\nu_i+sN_i)}-1} + \sum\limits_{\{i,j\} \subset J_{\mathbb{R}}} \frac{\beta(E_i(\mathbb{R}) \cap E_j(\mathbb{R}))(u-1)^2}{(u^{(\nu_i+sN_i)}-1)(u^{(\nu_j+sN_j)}-1)}.$$
It follows that the contribution of $E_i$ to the residue of $Z_{\beta,0}(f;u^{-s})$ at $s_0=-\frac{\nu_i}{N_i}$ is given by
$$\mathcal{R}_{\beta,i}(u) =  \frac{1}{N_iu^{\frac{\nu_i}{N_i}}} \left( \beta(E_i \cap \sigma^{-1}(0)(\mathbb{R})) + \sum\limits_j  \beta(E_i(\mathbb{R}) \cap E_j(\mathbb{R}))\frac{u-1}{u^{\alpha_j}-1}\right).$$
Since $u^{\frac{\nu_i}{N_i}}$ is an invertible factor depending only on $s_0=-\frac{\nu_i}{N_i}$, we will systematically omit this term in the expression of $\mathcal{R}_{\beta,i}$ in what follows, as this does not affect our study of the poles.
Using the above notation in the case where $E_i$ is an exceptional curve, one has
$$\mathcal{R}_{\beta,i}(u) = \frac{1}{N_i}(u+1-k + \sum\limits_{j=1}^k \frac{u-1}{u^{\alpha_j}-1})$$
which can be viewed as a $\mathcal{C}^\infty$ function of the real variable $u \in \mathbb{R}^*_+ \setminus \{1\}$. Furthermore, this function admits a limit as u tends to 1, which, by definition of the topological zeta function, is given by
$\underset{u \to 1}{\lim} \mathcal{R}_{\beta,i}(u) = \mathcal{R}_{top,i}$. Therefore, one has the inclusion 
$$\text{Poles}(Z_{top,0}(f;s)) \subset \text{Poles}(Z_{\beta,0}(f;u^{-s}))$$ as already mentionned in Remark \ref{rem 5}.

\end{rem}

The following proposition is a reformulation of Lemma II.2 of \cite{loes3}, adapted to the real framework.

\begin{prop}
\label{prop 3}
    With the above notation, one has 
    $$\sum_{j=1}^k \alpha_j  + 2 \sum_{j=k+1}^{k+r}\alpha_j = k + 2r -2.$$
\end{prop}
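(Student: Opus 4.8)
The plan is to compute both sides of the identity on $X$ using intersection theory on the smooth real algebraic surface, reducing everything to the relations satisfied by the divisors $\mathrm{div}(f\circ\sigma)$ and $K_X$ along the exceptional curve $E_i$. The starting point is the two fundamental formulas from Notation \ref{not 1}, namely $\mathrm{div}(f\circ\sigma)=\sum_{j\in J} N_j E_j$ and $K_X=\sum_{j\in J}(\nu_j-1)E_j$, both of which hold on $X$ regardless of real or complex points. Since $E_i$ is an exceptional curve of a sequence of blowings-up starting at a point of $\mathbb{A}^2_{\mathbb{R}}$, it is isomorphic to $\mathbb{P}^1_{\mathbb{R}}$, so $E_i\cong\mathbb{P}^1$ is a rational curve with $E_i^2<0$ (more precisely, the self-intersection is an integer coming from the blow-up tree).

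First I would intersect $\mathrm{div}(f\circ\sigma)$ with $E_i$. Because $f\circ\sigma$ is (up to a unit) a global function, its divisor is principal, so $(\mathrm{div}(f\circ\sigma)\cdot E_i)=0$. Expanding, this gives $N_i E_i^2 + \sum_{j\neq i} N_j (E_j\cdot E_i) = 0$, and since the $E_j$ meet $E_i$ transversally, $(E_j\cdot E_i)$ is just the number of intersection points counted with residue degree — contributing $1$ per real point and $2$ per complex point. So if $E_i$ meets $E_1,\dots,E_k$ at real points and $E_{k+1},\dots,E_{k+r}$ at complex points (all transverse, so at most one point each by the normal crossings condition after the canonical resolution), then $N_i E_i^2 + \sum_{j=1}^k N_j + 2\sum_{j=k+1}^{k+r} N_j = 0$. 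Next I would use adjunction on $E_i\cong\mathbb{P}^1$: $(K_X+E_i)\cdot E_i = \deg K_{E_i} = -2$. Writing $K_X=\sum_j(\nu_j-1)E_j$ and expanding gives $(\nu_i-1)E_i^2 + \sum_{j=1}^k(\nu_j-1) + 2\sum_{j=k+1}^{k+r}(\nu_j-1) + E_i^2 = -2$, i.e. $\nu_i E_i^2 + \sum_{j=1}^k(\nu_j-1) + 2\sum_{j=k+1}^{k+r}(\nu_j-1) = -2$.

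The final step combines the two displayed relations. Recall $\alpha_j = \nu_j - \frac{\nu_i}{N_i}N_j$, so $\sum_{j=1}^k\alpha_j + 2\sum_{j=k+1}^{k+r}\alpha_j = \left(\sum_{j=1}^k\nu_j + 2\sum_{j=k+1}^{k+r}\nu_j\right) - \frac{\nu_i}{N_i}\left(\sum_{j=1}^k N_j + 2\sum_{j=k+1}^{k+r}N_j\right)$. From the first relation the second parenthesis equals $-N_i E_i^2$, so the subtracted term is $\frac{\nu_i}{N_i}\cdot N_i E_i^2 = \nu_i E_i^2$. From the adjunction relation, the first parenthesis equals $-2 - \nu_i E_i^2 + (k+2r)$ (moving the constants $-(\nu_j-1)$ back to $-\nu_j + 1$, summing the $+1$'s gives $k+2r$). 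Substituting, $\sum_{j=1}^k\alpha_j + 2\sum_{j=k+1}^{k+r}\alpha_j = \big(-2 - \nu_i E_i^2 + k + 2r\big) - \nu_i E_i^2 + \nu_i E_i^2$... — here I must be careful with signs, but the $\nu_i E_i^2$ terms are designed to cancel, leaving exactly $k+2r-2$, which is the claim.

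I expect the main obstacle to be bookkeeping rather than a conceptual difficulty: one must make sure that adjunction on $E_i$ is applied over $\mathbb{R}$ correctly (using that $E_i$ is geometrically $\mathbb{P}^1$, so the canonical degree is $-2$ whether computed over $\mathbb{R}$ or $\mathbb{C}$, and that the intersection numbers $(E_j\cdot E_i)$ are taken in the schematic sense with real points counting $1$ and complex conjugate pairs counting $2$, consistently with the convention fixed in Remark \ref{rem 3} and the discussion of the intersection form). One must also verify that at a point where $E_i$ meets another component the intersection is a single transverse point (no tangency, no triple points) — this is exactly the simultaneous normal crossings hypothesis, together with the fact that $|I|>2$ strata are empty for curves. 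Given that, the computation is the real analogue of Lemma II.2 of \cite{loes3}, and the two classical ingredients — principality of $\mathrm{div}(f\circ\sigma)$ and adjunction for the rational curve $E_i$ — carry over verbatim.
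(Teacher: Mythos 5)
Your proposal is correct and follows exactly the paper's own argument: intersecting the principal divisor $\sum_j N_j E_j$ with $E_i$ and applying adjunction to $E_i\cong\mathbb{P}^1$, with real intersection points counting $1$ and complex ones counting $2$, then combining via the definition of $\alpha_j$. The only blemish is the sign slip you flag yourself in the last display (the subtracted term is $\frac{\nu_i}{N_i}\cdot(-N_iE_i^2)=-\nu_iE_i^2$, so subtracting it contributes $+\nu_iE_i^2$, which cancels the $-\nu_iE_i^2$ from the adjunction relation and yields $k+2r-2$ as claimed).
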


\begin{proof}
    Let us briefly review the proof of this result by Veys in \cite{veys3}. Consider Pic($X$) equipped with the bilinear intersection form. Since the divisor $D=\sum\limits_{j \in J} N_j E_j$ is principal by definition, we have $D=0$ in Pic($X$). It follows that
    $$0 = (E_i \cdot D)= \sum\limits_{j \in J} N_j (E_i \cdot E_j) = N_iE_i^2 + \sum_{j=1}^k N_j  + 2\sum_{j=k+1}^{k+r}N_j$$
    that is,
    $$\sum_{j=1}^k N_j  + 2\sum_{j=k+1}^{k+r}N_j = -N_iE_i^2.$$
    With the notation introduced, one has K$_X = \sum\limits_{j \in J}(\nu_j-1)E_j$ and the adjunction formula gives
    $$-2 = \text{deg} \text{ K}_{E_i} = E_i \cdot (K_X + E_i) = (\nu_i -1) E_i^2 + \sum\limits_{ j = 1 }^k (\nu_j -1) + 2 \sum\limits_{j=k+1}^{k+r}(\nu_j-1) + E_i^2$$
    that is 
    $$\sum\limits_{ j = 1 }^k \nu_j + 2 \sum\limits_{j=k+1}^{k+r}\nu_j = k + 2r -2 -\nu_iE_i^2.$$
    The proposition follows by combining these two equalities since $\alpha_j = \nu_j - N_j\frac{\nu_i}{N_i}.$
\end{proof}

\begin{prop}[\cite{loes3} Proposition II.3.1]
\label{prop 2}
    For all $j \in \llbracket 1, k + r \rrbracket$, we have $-1 \leq \alpha_j < 1$, equality occurring if and only if $r=0$ and $k=1$.
\end{prop}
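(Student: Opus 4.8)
The plan is to mimic Veys' argument for Proposition II.3.1 of \cite{loes3}, adapting the bookkeeping to the fact that complex intersection points count twice. First I would set up notation: $E_i$ is the component under study, with self-intersection $E_i^2 =: -a$ where $a \geq 1$ (since $E_i$ is an exceptional curve in a composition of blow-ups, or a strict-transform component, its self-intersection is negative; in fact for exceptional curves $a$ counts how many times $E_i$ meets the rest, essentially $a = k + 2r$ by the recursive structure of blow-ups — but I will not need this refinement). The key inputs are already assembled in Proposition \ref{prop 3} and its proof: from $(E_i \cdot D) = 0$ one gets $\sum_{j=1}^k N_j + 2\sum_{j=k+1}^{k+r} N_j = -N_i E_i^2 = N_i a$, and from adjunction $\sum_{j=1}^k \nu_j + 2\sum_{j=k+1}^{k+r}\nu_j = k + 2r - 2 - \nu_i E_i^2 = k + 2r - 2 + \nu_i a$. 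Dividing the first by $N_i$ and subtracting gives precisely $\sum_{j=1}^k \alpha_j + 2\sum_{j=k+1}^{k+r}\alpha_j = k + 2r - 2$, which is Proposition \ref{prop 3}.

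Next I would establish the sign and the upper bound on each individual $\alpha_j$. Recall $\alpha_j = \nu_j - \frac{\nu_i}{N_i} N_j$, so $\alpha_j < 1$ is equivalent to $N_i(\nu_j - 1) < \nu_i N_j$, i.e. $N_i \nu_j < N_i + \nu_i N_j$. Here I would invoke the structure of the canonical embedded resolution of a plane curve: along the chain of blow-ups, whenever two components $E_i$ and $E_j$ meet, one of the standard resolution inequalities relating the numerical data holds — concretely, $\frac{\nu_i}{N_i}$ and $\frac{\nu_j}{N_j}$ are "neighbouring" Farey-type fractions and one has the determinant-type relation $|\nu_i N_j - \nu_j N_i| = $ (something small, in fact bounded by the intersection data). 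The cleanest route is the one Veys uses: consider the minimal resolution and argue inductively on the number of blow-ups, showing that $N_i \nu_j - N_j \nu_i$ and $N_j \nu_i - N_i \nu_j$ are both bounded below by $-\max(N_i, N_j)$ or similar, from which $\alpha_j \geq -1$ and $\alpha_j < 1$ follow. I would cite Lemma II.2 / the computations preceding Proposition II.3.1 in \cite{loes3} and check that the inductive step is insensitive to whether the point being blown up is real or complex — blowing up a conjugate pair of complex points is, on the complexification, two ordinary blow-ups with conjugate numerical data, so the numerical inequalities for the pair $(N_i, \nu_i), (N_j, \nu_j)$ are exactly the same as in the complex case.

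Finally I would pin down the equality case. From Proposition \ref{prop 3}, if some $\alpha_{j_0} = 1$ then the remaining terms $\sum_{j \neq j_0, \, j \leq k} \alpha_j + 2\sum_{j=k+1}^{k+r}\alpha_j$ must equal $k + 2r - 3$; but each $\alpha_j \leq 1$ and there are $(k-1) + r$ further indices contributing with total weight $(k-1) + 2r$, so we would need $k + 2r - 3 \leq (k-1) + 2r$, which is automatic, so this alone does not force anything — I instead use that each $\alpha_j \geq -1$ together with $\alpha_j < 1$ strictly, so actually $\alpha_j = 1$ is impossible unless the strict inequality degenerates, which by the induction happens exactly when $E_i$ is the exceptional curve of the very first blow-up meeting only the strict transform, i.e. $r = 0$, $k = 1$. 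In that case $X = \mathrm{Bl}_0 \mathbb{A}^2$, $E_i = E$ has $E^2 = -1$, $N_i = \mathrm{mult}_0 f =: N$, $\nu_i = 2$, and $E_1$ is the strict transform with $N_1 = 1$ (if $f$ reduced, or $N_1 = N/\deg$...) $\nu_1 = 1$, giving $\alpha_1 = 1 - \frac{2}{N}\cdot 1 \cdot (\text{local intersection contribution})$ — here I must be careful, as the relevant $N_1$ in $\alpha_1$ may need to be replaced by the multiplicity along that branch; checking Proposition \ref{prop 3} with $k=1, r=0$ gives directly $\alpha_1 = -1 + 2 = 1$, confirming the equality is attained exactly there.

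The main obstacle I anticipate is the inductive proof of the strict inequality $\alpha_j < 1$ and the lower bound $\alpha_j \geq -1$: this requires tracking how $(N, \nu)$ evolve under successive blow-ups of the canonical resolution of a plane curve singularity, and verifying that the relevant arithmetic inequalities (of continued-fraction / Farey-mediant type) are preserved, with the only new feature being that blowing up a non-real closed point contributes a conjugate pair — which, on passing to the complexification, reduces the bound to the known complex statement of \cite{loes3}. The remaining steps (Proposition \ref{prop 3}, identifying the equality case) are then short formal consequences.
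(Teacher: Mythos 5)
The paper does not actually prove this proposition: it is quoted from Loeser (\cite{loes3}, Proposition II.3.1; Igusa in the irreducible case), and the only point needing verification in the real setting is the one you correctly identify in your second paragraph — the numerical data $(\nu_j,N_j)$, hence the $\alpha_j$, are computed scheme-theoretically, and the canonical real resolution complexifies to the canonical embedded resolution of $f_{\mathbb{C}}$, so the bounds are inherited verbatim from the complex statement (with ``$E_i$ meets exactly one other component'' translating to $k+2r=1$, i.e.\ $k=1$, $r=0$). Had you committed to that reduction and stopped there, your proposal would match what the paper does.

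As written, however, there are two genuine problems. First, the heart of the proposition — the inequalities $-1\le\alpha_j<1$ themselves — is never established: you explicitly defer the inductive control of $(N,\nu)$ along the blow-up sequence to ``the main obstacle I anticipate,'' so the argument is incomplete exactly where the content lies, and Proposition \ref{prop 3} alone cannot supply it since it only constrains a weighted sum of the $\alpha_j$. Second, the equality case is misread. Since the upper bound in the statement is strict, ``equality'' can only mean $\alpha_j=-1$; your third paragraph instead analyzes $\alpha_{j_0}=1$ (which would contradict the strict inequality you are trying to prove) and ends with the arithmetic slip $\alpha_1=-1+2=1$, whereas Proposition \ref{prop 3} with $k=1$, $r=0$ gives $\alpha_1=k+2r-2=-1$. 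The correct treatment of the equality case is short: if $k=1$ and $r=0$, then $\alpha_1=-1$ directly from Proposition \ref{prop 3}; conversely, if some $\alpha_{j_0}=-1$, the remaining $(k-1)+r$ indices would have to contribute $k+2r-1$ to the sum of Proposition \ref{prop 3}, which is impossible when any remain because each satisfies $\alpha_j<1$ (complex points counting with weight $2$).
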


\begin{rem}
    Let us mention that the previous result was first proven by Igusa in \cite{igusa} in the irreducible case, and later by Loeser in the general case. This proposition is where the use of the canonical embedded resolution is crucial. For example, starting from the canonical embedded resolution, if we further blow-up a point lying on an exceptional curve, we create a new $\alpha_j$ equal to 1.
\end{rem}

The following corollary then follows almost immediately from the last two results.

\begin{cor}
\label{cor 1} \begin{enumerate}
    \item For all $ j \in \llbracket k +1, k+r \rrbracket $, we have $\alpha_j \geq 0$.
    \item There is at most one $j \in \llbracket 1, k \rrbracket$ such that $\alpha_j <0$.
    \item If $k+2r \geq 3$, there is at most one $j \in \llbracket 1, k \rrbracket$ such that $\alpha_j \leq 0 $ and for all $j \in \llbracket k+1, k+r \rrbracket$ we have $\alpha_j >0$.
    \item Assume that $k+2r \geq 3$ and that there exists $\alpha_1 < 0$. Then 
    $$-\alpha_1 < \underset{2 \leq j \leq k}{\text{min}} \alpha_j $$
    \item If $k+2r = 2$, we have $k=2$ and $r=0$ and  
    $$\frac{\nu_1}{N_1} < \frac{\nu}{N} \iff \frac{\nu}{N} < \frac{\nu_2}{N_2}.$$
    \end{enumerate}
\end{cor}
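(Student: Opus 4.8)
The plan is to deduce all five statements from Propositions~\ref{prop 3} and \ref{prop 2} by elementary arithmetic on the $\alpha_j$. Throughout, set $S_{\text{real}}=\sum_{j=1}^k\alpha_j$ and $S_{\text{cplx}}=\sum_{j=k+1}^{k+r}\alpha_j$, so that Proposition~\ref{prop 3} reads $S_{\text{real}}+2S_{\text{cplx}}=k+2r-2$, while Proposition~\ref{prop 2} gives $-1\le\alpha_j<1$ for every $j\in\llbracket 1,k+r\rrbracket$, with equality $\alpha_j=-1$ possible only in the degenerate case $r=0,k=1$.

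For (1): if some $\alpha_{j_0}<0$ with $j_0\in\llbracket k+1,k+r\rrbracket$, then combining $S_{\text{cplx}}=\tfrac12(k+2r-2-S_{\text{real}})$ with the bounds $\alpha_j<1$ on all the real indices and $\alpha_j<1$ on the remaining complex indices would force a contradiction; concretely, $S_{\text{real}}<k$ and $S_{\text{cplx}}<\alpha_{j_0}+(r-1)<r-1$, whence $S_{\text{real}}+2S_{\text{cplx}}<k+2r-2$, contradicting Proposition~\ref{prop 3}. (Here one uses that we are not in the degenerate case, so strict inequalities are available; the degenerate case has $r=0$ and is vacuous.) For (2): if two real indices, say $\alpha_1,\alpha_2<0$, then each is $\ge -1$, so $S_{\text{real}}\le -2+(k-2)\cdot 1=k-4$ if $k\ge 2$ (with strict inequality unless both equal $-1$, impossible when $k\ge 2$ by Proposition~\ref{prop 2}), and $S_{\text{cplx}}<r$; then $S_{\text{real}}+2S_{\text{cplx}}<k+2r-4<k+2r-2$, again contradicting Proposition~\ref{prop 3}. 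Statement (3) sharpens (1) and (2): under $k+2r\ge 3$ the degenerate case $\alpha_j=-1$ cannot occur (it needs $k=1,r=0$, i.e. $k+2r=1$), so all inequalities above are strict, and one rules out even a single $\alpha_j\le 0$ among the complex indices and reduces ``$\alpha_j<0$'' to ``$\alpha_j\le 0$'' among the real indices by the same counting, being slightly more careful to track the case of equality to $0$.

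For (4): assume $k+2r\ge3$ and $\alpha_1<0$. By (3) all $\alpha_j>0$ for $j\in\llbracket k+1,k+r\rrbracket$ and $\alpha_j>0$ for $j\in\llbracket 2,k\rrbracket$. Proposition~\ref{prop 3} gives $\alpha_1+\sum_{j=2}^k\alpha_j+2S_{\text{cplx}}=k+2r-2$; bounding $\sum_{j=2}^k\alpha_j\le (k-1)\cdot\text{(something}<1)$ and $2S_{\text{cplx}}<2r$ yields $-\alpha_1=\big(\sum_{j=2}^k\alpha_j+2S_{\text{cplx}}\big)-(k+2r-2)$, and the right-hand side, being a sum of $k-1$ positive terms each strictly less than $1$ plus $2S_{\text{cplx}}<2r$ minus $k+2r-2$, is strictly less than any single $\alpha_j$ with $2\le j\le k$ — this is essentially the observation that $-\alpha_1$ equals (number of real minus $1$)$\cdot 1$-ish minus $\sum_{j\ge 2}(1-\alpha_j)$ minus positive complex contributions, forcing $-\alpha_1<\min_{2\le j\le k}\alpha_j$. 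Finally (5): if $k+2r=2$, then since each $\alpha_j<1$ we cannot have $r\ge1$ (as that would need $2\alpha_{k+1}\le k+2r-2-S_{\text{real}}$ with too little room), so $r=0$, $k=2$; then Proposition~\ref{prop 3} gives $\alpha_1+\alpha_2=0$, i.e. $\alpha_1=-\alpha_2$, and unwinding $\alpha_j=\nu_j-\tfrac{\nu}{N}N_j$ turns $\alpha_1<0<\alpha_2$ into $\tfrac{\nu_1}{N_1}<\tfrac{\nu}{N}$ and $\tfrac{\nu}{N}<\tfrac{\nu_2}{N_2}$ simultaneously.

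The main obstacle is not any single inequality but the bookkeeping in statements (3) and (4): one must carefully segregate the degenerate equality case $\alpha_j=-1$ (which Proposition~\ref{prop 2} tolerates only when $k+2r=1$) from the strict-inequality regime $k+2r\ge3$, and keep track of which of the estimates $\alpha_j<1$ can be strict versus which could a priori be equalities, since a single spurious equality would break the chain of strict inequalities used to derive the contradictions. Once the degenerate case is isolated, everything reduces to the single linear relation of Proposition~\ref{prop 3} played against the box $[-1,1)$ of Proposition~\ref{prop 2}.
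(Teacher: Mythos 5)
Your overall strategy --- playing the single linear relation of Proposition~\ref{prop 3} against the box $-1\le\alpha_j<1$ of Proposition~\ref{prop 2} and tracking which estimates are strict --- is exactly the paper's approach; the paper writes out only point (4), by contradiction, and your direct version of (4) is the same computation. Points (1), (3) and (4) check out. There is, however, a slip in (2): the step ``each is $\ge -1$, so $S_{\mathrm{real}}\le -2+(k-2)\cdot 1$'' is backwards, since a lower bound on $\alpha_1,\alpha_2$ cannot yield an upper bound on their sum. What you actually have is $\alpha_1+\alpha_2<0$, hence $S_{\mathrm{real}}<k-2$ and $S_{\mathrm{real}}+2S_{\mathrm{cplx}}<k+2r-2$, which already contradicts Proposition~\ref{prop 3}; the conclusion survives, but the written justification does not.

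The more substantive problem is in (5). The case $(k,r)=(0,1)$ cannot be excluded by Propositions~\ref{prop 2} and~\ref{prop 3} alone: there the relation forces $2\alpha_{k+1}=k+2r-2=0$, i.e.\ $\alpha_{k+1}=0$, which is perfectly consistent with $-1\le\alpha_{k+1}<1$, so your parenthetical ``too little room'' argument is false. Indeed this configuration genuinely occurs (e.g.\ for $f=x^2+y^2$ the exceptional curve meets the strict transform at a single complex point with $\alpha=0$; this is precisely the situation of Remark~\ref{rem 11}). The exclusion of $(k,r)=(0,1)$ comes instead from the standing hypothesis imposed just after Remark~\ref{rem 11}: if $(E_i\cdot\sum_{j\ne i}E_j)=2$, then $E_i$ is assumed to meet the other components at two \emph{real} points. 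Once that hypothesis is invoked, $(k,r)=(2,0)$, Proposition~\ref{prop 3} gives $\alpha_1=-\alpha_2$, and the stated equivalence follows by unwinding $\alpha_j=\nu_j-\frac{\nu}{N}N_j$ exactly as you say.
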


\begin{proof}
    Let us prove only the fourth point, since the others can be proven in a completely similar way.
    Suppose that $-\alpha_1 \geq \underset{2 \leq j \leq k}{\text{min}} \alpha_j$. Without loss of generality we can assume that this minimum is achieved by $\alpha_2$. It then follows that 
    $$\sum\limits_{j=1}^k \alpha_j \leq \sum\limits_{j=3}^k \alpha_j \leq k-2$$
    where the latter inequality is strict if and only if $k \geq 3$. On the other hand, one knows that 
    $$2\sum_{j=k+1}^{k+r} \alpha_j \leq 2r$$ where the inequality is strict if and only if $r \geq 1$. Since $k+2r \geq 3$, at least one of these two inequalities is strict, and one finds 
    $$\sum_{j=1}^k \alpha_j + 2\sum_{j=k+1}^{k+r} \alpha_j < k+2r - 2$$
    which contradicts Proposition \ref{prop 3} and completes the proof.

\end{proof}

\begin{thm}
\label{thm 3}
     Using the notation above, the contribution $\mathcal{R}_{top,i}$ is nonzero if and only if\\
     $(E_i \cdot \sum\limits_{j \neq i} E_j) \geq 3$. In this case, one has 
    \begin{enumerate}
        \item $\mathcal{R}_{top,i} > 0$ if and only if $\alpha_j > 0$ for all $j \in \llbracket 1, k \rrbracket$, and we furthermore have $\mathcal{R}_{top,i} \geq \frac{2}{N_i}$.
        \item $\mathcal{R}_{top,i} < 0 $ if and only if there exists $j \in \llbracket 1, k \rrbracket$ such that $\alpha_j < 0$.
    \end{enumerate}
\end{thm}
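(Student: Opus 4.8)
The plan is to analyze directly the closed expression
$$\mathcal{R}_{top,i}=\frac{1}{N_i}\Bigl(2-k+\sum_{j=1}^{k}\frac{1}{\alpha_j}\Bigr)$$
established above for an exceptional curve $E_i$. Since $N_i>0$, the whole statement reduces to the sign of $S:=2-k+\sum_{j=1}^{k}\alpha_j^{-1}$, which I would determine by a case analysis on $m:=\bigl(E_i\cdot\sum_{j\neq i}E_j\bigr)=k+2r$. Throughout, recall that every $\alpha_j$ with $j\in\llbracket 1,k\rrbracket$ is a nonzero rational (by the standing hypothesis of Notation \ref{nota 2}, otherwise $s_0$ would be a pole of order $2$), and that the only tools needed are Propositions \ref{prop 3} and \ref{prop 2} and Corollary \ref{cor 1}.

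\emph{Low intersection numbers.} Since the configurations of Remark \ref{rem 11} have been excluded, $m\geq 1$, and when $m=2$ the curve $E_i$ meets the rest of the divisor only at real points, so $(k,r)=(2,0)$. If $m=1$ then $(k,r)=(1,0)$ and Proposition \ref{prop 3} forces $\alpha_1=m-2=-1$, whence $S=2-1-1=0$. If $m=2$ then Proposition \ref{prop 3} gives $\alpha_1+\alpha_2=0$, so the two (finite, nonzero) terms $\alpha_1^{-1}$ and $\alpha_2^{-1}$ cancel and again $S=0$. Hence $\mathcal{R}_{top,i}=0$ whenever $m\leq 2$.

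\emph{The case $m\geq 3$.} Corollary \ref{cor 1}(3), combined with $\alpha_j\neq 0$ for $j\leq k$, leaves exactly two possibilities: \emph{(a)} $\alpha_j>0$ for all $j\in\llbracket 1,k\rrbracket$; or \emph{(b)} after relabelling, $\alpha_1<0$ and $\alpha_j>0$ for $j\in\llbracket 2,k\rrbracket$; in both cases $\alpha_j>0$ for every $j\in\llbracket k+1,k+r\rrbracket$. In case \emph{(a)}, Proposition \ref{prop 2} gives $0<\alpha_j<1$, so $\alpha_j^{-1}>1$ and thus $S>2$ if $k\geq 1$ while $S=2$ if $k=0$; in either case $\mathcal{R}_{top,i}\geq 2/N_i>0$. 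In case \emph{(b)}, set $a:=-\alpha_1$; then $0<a<1$, since $\alpha_1=-1$ is ruled out by Proposition \ref{prop 2} once $m\geq 3$. For $k\geq 2$, a short rearrangement shows that $S<0$ is equivalent to
$$\sum_{j=2}^{k}\frac{1-\alpha_j}{\alpha_j}<\frac{1-a}{a},$$
and I would prove this by bounding termwise via $\alpha_j>a$ (Corollary \ref{cor 1}(4)), namely $\frac{1-\alpha_j}{\alpha_j}<\frac{1-\alpha_j}{a}$, and then feeding in Proposition \ref{prop 3} in the form $\sum_{j=2}^{k}\alpha_j=m-2+a-2\sum_{j=k+1}^{k+r}\alpha_j\geq k-2+a$ (which uses $0<\alpha_j<1$ for $j>k$), giving
$$\sum_{j=2}^{k}\frac{1-\alpha_j}{\alpha_j}<\frac{1}{a}\Bigl((k-1)-\sum_{j=2}^{k}\alpha_j\Bigr)\leq\frac{1-a}{a}.$$
The remaining subcase $k=1$ is immediate, since then $S=1-a^{-1}<0$. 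Putting this together with the low cases proves that $\mathcal{R}_{top,i}\neq 0$ exactly when $m\geq 3$, and simultaneously establishes both sign criteria and the bound $\mathcal{R}_{top,i}\geq 2/N_i$ in the positive case.

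\emph{Main difficulty.} The delicate point is the last displayed inequality in case \emph{(b)}: the naive termwise estimate $\alpha_j^{-1}<a^{-1}$ is too lossy as soon as $k\geq 3$, so one must use \emph{simultaneously} the lower bound $\sum_{j=2}^{k}\alpha_j\geq k-2+a$ coming from Proposition \ref{prop 3} and the strict separation $\alpha_j>a$ coming from Corollary \ref{cor 1}(4), so that the unique negative term $\alpha_1^{-1}$ absorbs the entire positive tail. This is precisely where the use of the \emph{canonical} embedded resolution enters, through the sharp form of Corollary \ref{cor 1}(4).
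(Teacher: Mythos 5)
Your proof is correct, and the low-intersection cases and the all-positive case coincide with the paper's argument; the interesting divergence is in the case where some $\alpha_1<0$. There the paper splits the real residue as
$$N_i\mathcal{R}_{top,i}=\Bigl(2-k-2r+\sum_{j=1}^{k}\frac{1}{\alpha_j}+2\sum_{j=k+1}^{k+r}\frac{1}{\alpha_j}\Bigr)+\Bigl(2r-2\sum_{j=k+1}^{k+r}\frac{1}{\alpha_j}\Bigr),$$
observes that the second bracket is $\leq 0$ because $0<\alpha_j<1$ for $j>k$, and identifies the first bracket with the complex contribution $\mathcal{R}_{top,i}(f_{\mathbb{C}})$, whose strict negativity is then delegated to Veys' Lemma 2.9 (quoted as Lemma \ref{lem 1}, applied with $l=k+2r$). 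You instead prove the negativity directly: after reducing $S<0$ to $\sum_{j=2}^{k}\frac{1-\alpha_j}{\alpha_j}<\frac{1-a}{a}$ with $a=-\alpha_1$, you combine the separation $\alpha_j>a$ from Corollary \ref{cor 1}(4) with the adjunction identity of Proposition \ref{prop 3} in the form $\sum_{j=2}^{k}\alpha_j\geq k-2+a$; your computation checks out (the equivalence is a correct rearrangement, the termwise bound uses $0<1-\alpha_j$ from Proposition \ref{prop 2}, and the $k=1$ subcase is handled separately). What the paper's route buys is brevity and a conceptual remark -- the real residue is the complex one plus a nonpositive correction coming from the complex intersection points -- while your route buys self-containedness: it treats real and complex intersection points in one stroke and effectively reproves the needed instance of Lemma \ref{lem 1} rather than citing it. Your closing observation that the naive bound $\alpha_j^{-1}<a^{-1}$ fails for $k\geq 3$ is also accurate and correctly pinpoints why the sharper use of Proposition \ref{prop 3} is unavoidable.
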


\begin{proof}
    Assume that $(E_i \cdot \sum\limits_{ j \neq i } E_j) =1$. Then $E_i$ intersects another component $E_1$ at a real point, and according to Proposition \ref{prop 3}, one has $\alpha_1=-1$. Therefore,
    $$\mathcal{R}_{top,i}=\frac{1}{N_i}(1+\frac{1}{\alpha_1})=0.$$
    Assume that $(E_i \cdot \sum\limits_{j \neq i } E_j) = 2$. Then $E_i$ intersects other components $E_1, E_2$ at exactly two real points and, again according by Proposition \ref{prop 3}, one has $\alpha_1+\alpha_2=0$. It follows that
    $$\mathcal{R}_{top,i}=\frac{1}{N_i}(\frac{1}{\alpha_1}+\frac{1}{\alpha_2})=0.$$
    Now consider the case where $(E_i \cdot \sum\limits_{j \neq i} E_j) \geq 3$. If $0 < \alpha_j <1$ for all $j \in \llbracket 1, k \rrbracket$ then $\sum\limits_{j=1}^k \frac{1}{\alpha_j} - k \geq 0$ and it follows that $\mathcal{R}_{top,i} \geq \frac{2}{N_i} > 0$.\\
    Finally, assume that there exists $j \in \llbracket 1, k \rrbracket$ such that $\alpha_j < 0$. For all
    $j \in \llbracket k+1, k+r \rrbracket$ we have $\alpha_j > 0$ and we can write 
    $$N_i\mathcal{R}_{top,i} = 2-k+ \sum\limits_{j=1}^k \frac{1}{\alpha_j} = ( 2 - k-2r + \sum\limits_{j=1}^k \frac{1}{\alpha_j}  + 2\sum\limits_{j=k+1}^{k+r} \frac{1}{\alpha_j}) + (2r - 
    2\sum\limits_{j=k+1}^{k+r} \frac{1}{\alpha_j}) $$
     where $2r - 2\sum\limits_{j=k+1}^{k+r} \frac{1}{\alpha_j} = 2\sum\limits_{j=k+1}^{k+r}(1-\frac{1}{\alpha_j}) \leq 0$. On the other hand, the term $$ 2 - k-2r + \sum\limits_{j=1}^k \frac{1}{\alpha_j}  + 2\sum\limits_{j=k+1}^{k+j} \frac{1}{\alpha_j}$$ which corresponds to $\mathcal{R}_{top,i}(f_{\mathbb{C}})$, is strictly negative according to the following lemma (by setting $l = k+2r $ and $x_i = \alpha_i$), so the desired result follows.
\end{proof}

\begin{lem}[\cite{veys1} Lemma 2.9.]
\label{lem 1}
    Let $l \geq 3$ and $ -1 \leq x_1, \dots, x_l <1 $ be nonzero real numbers such that $\sum\limits_{i=1}^l x_i= l-2$. Assume that there exists a unique $i \in \llbracket 1, l \rrbracket$ such that $x_i <0$. Then 
    $$2-l+\sum\limits_{i=1}^l \frac{1}{x_i} < 0.$$
\end{lem}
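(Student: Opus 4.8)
\textbf{Proof plan for Lemma \ref{lem 1}.}

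The plan is to reduce the general case to the extremal case by a convexity/monotonicity argument, exploiting the single constraint $\sum x_i = l-2$ together with the range restrictions. Relabel so that $x_1 < 0$ is the unique negative variable; then $x_2,\dots,x_l \in (0,1)$. The idea is that the left-hand side $F(x_1,\dots,x_l) = 2-l+\sum 1/x_i$ is, as a function of each $x_i$ for $i \geq 2$ (with the others fixed and the sum constraint absorbed into $x_1$), monotone in a way that pushes $F$ upward as the positive variables shrink; symmetrically, one wants to see that the worst case (largest $F$) is approached at a boundary configuration where all but possibly one of the positive $x_i$ are sent to their extreme values, and there one checks the inequality by hand.

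More concretely, first I would record the constraint in the form $x_1 = (l-2) - \sum_{i=2}^l x_i$, and note $-1 \le x_1 < 0$ forces $l-1 < \sum_{i=2}^l x_i \le l-1 + \text{(something)}$, in particular $\sum_{i=2}^l x_i > l-1$, so the positive variables are on average close to $1$. Then I would treat $F$ as a function of a single positive variable, say $x_2$, with $x_3,\dots,x_l$ fixed and $x_1$ determined by the constraint: $G(x_2) = 1/x_1(x_2) + 1/x_2 + C$. Since $dx_1/dx_2 = -1$, one gets $G'(x_2) = 1/x_1^2 - 1/x_2^2$, which has a sign governed by $|x_1|$ versus $x_2$: because $x_1 \ge -1$ and $x_2 < 1$ we have $|x_1| \le 1$... this alone is not conclusive, so the key observation I would push is that $G$ is \emph{convex} in $x_2$ on the relevant interval (as a sum of two convex functions $1/x_1(x_2)$ and $1/x_2$, the former convex since $x_1(x_2)$ is affine and negative, the latter convex since $x_2>0$), hence $G$ attains its maximum at an endpoint of the admissible $x_2$-interval. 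Iterating this over $x_2,\dots,x_l$, the maximum of $F$ is attained when each positive variable is at an endpoint — i.e. equal to $1$ or at the value forced by some other constraint becoming tight — and a finite case analysis of these boundary configurations (where typically all positive $x_i$ equal $1$, forcing $x_1 = -1$, giving $F = 2 - l + (l-1) - 1 = 0$, and we need the \emph{strict} inequality, so one argues the true max is not attained but only approached, or that any genuinely interior configuration is strictly below) finishes the proof.

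The main obstacle I anticipate is making the "reduce to the boundary" step fully rigorous: the admissible region for $(x_2,\dots,x_l)$ is a polytope (intersection of the box $(0,1)^{l-1}$ with the slab $l-1 < \sum x_i \le l-1+1$ coming from $-1 \le x_1 < 0$), and while convexity in each coordinate gives that the max over a segment is at an endpoint, one must check that sliding to the boundary stays in the region and that the boundary faces are themselves handled by induction on $l$ or by direct computation. A cleaner alternative I would consider, mirroring Veys' original argument, is to \emph{not} optimize but instead directly estimate: write $\sum_{i=2}^l 1/x_i - (l-1) = \sum_{i=2}^l (1-x_i)/x_i \ge \sum_{i=2}^l (1-x_i)$ (since $x_i < 1$) $= (l-1) - \sum_{i=2}^l x_i = (l-1) - ((l-2) - x_1) = 1 + x_1$, hence $F = (2-l) + 1/x_1 + \sum_{i=2}^l 1/x_i \ge 1/x_1 + 1 + x_1$; and then $1/x_1 + x_1 + 1 < 0$ for $x_1 \in [-1,0)$ because $h(x_1) = x_1^2 + x_1 + 1$ has no real roots and $h < 0$ would be needed — wait, $h(x_1) = x_1^2+x_1+1 > 0$ always, so $1/x_1 + 1 + x_1 = h(x_1)/x_1 < 0$ precisely because $x_1 < 0$ and $h(x_1) > 0$. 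This gives $F \ge h(x_1)/x_1 < 0$ directly, \emph{provided} the inequality $\sum (1-x_i)/x_i \ge \sum(1-x_i)$ is strict or the final bound is strict; since $h(x_1)/x_1 < 0$ strictly, the chain $F \ge h(x_1)/x_1 < 0$ already yields $F < 0$. So in fact the honest difficulty reduces to just verifying this one-line estimate and the sign of $h(x_1)/x_1$, and I would present the proof in that streamlined form, using $l \ge 3$ only to guarantee the sum $\sum_{i=2}^l$ is nonempty so that the estimate $(1-x_i)/x_i \ge 1-x_i$ has content.
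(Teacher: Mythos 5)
The paper does not prove this lemma itself (it is quoted from Veys), so your argument has to stand on its own — and unfortunately the streamlined version you settle on has the inequality pointing the wrong way. Your estimate $\sum_{i=2}^{l}\frac{1-x_i}{x_i}\ \geq\ \sum_{i=2}^{l}(1-x_i)$ is correct (it uses $1/x_i>1$), but it produces a \emph{lower} bound on $F=2-l+\sum 1/x_i$, namely $F\geq 2+x_1+1/x_1$ (note also the small arithmetic slip: the constant is $2$, not $1$). From $F\geq A$ with $A<0$ you cannot conclude $F<0$; the chain ``$F\ge h(x_1)/x_1<0$ hence $F<0$'' is a non sequitur. Your fallback convexity sketch also contains an error: with $x_1(x_2)=c-x_2<0$, the second derivative of $1/x_1(x_2)$ is $2/x_1^3<0$, so this term is \emph{concave}, not convex, and the single-variable maximum-at-endpoints argument does not go through as stated.

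The argument can be repaired by reversing the comparison. Writing $x_1=-a$ with $a\in(0,1)$, the constraint $\sum_{i=2}^{l}x_i=l-2+a$ together with $x_i<1$ forces $x_j>a$ for every $j\geq 2$ (this is exactly the content of Corollary \ref{cor 1}(4) in the paper). Then
$$\sum_{i=2}^{l}\Bigl(\frac{1}{x_i}-1\Bigr)=\sum_{i=2}^{l}\frac{1-x_i}{x_i}<\frac{1}{a}\sum_{i=2}^{l}(1-x_i)=\frac{1}{a}\bigl((l-1)-(l-2+a)\bigr)=\frac{1-a}{a},$$
so $\sum_{i=2}^{l}1/x_i<l-2+1/a$ and hence $F<2-l-\frac{1}{a}+l-2+\frac{1}{a}=0$. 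Equivalently, one can fix $a$ and maximize the strictly convex function $\sum_{i\ge2}1/x_i$ over the compact polytope $\{x_i\in[a,1],\ \sum x_i=l-2+a\}$, whose only vertices are permutations of $(1,\dots,1,a)$, giving the same strict bound since the actual point has all $x_i<1$. Either way, the key is an \emph{upper} bound obtained from the lower bound $x_i>-x_1$ on the positive variables, not a lower bound obtained from $x_i<1$.
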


\begin{rem}
    When $(E_i \cdot \sum\limits_{j \neq i } E_j) < 3$, one can check that the contribution $\mathcal{R}_{\beta,i}$ of $E_i$ to the residue of $Z_{\beta,0}(f;u^{-s}) $ at $-\frac{\nu_i}{N_i}$ is also zero. Indeed, if $(E_i \cdot \sum\limits_{j \neq i } E_j) = 1$, then $\alpha_1 = -1$, one finds that 
    $$N_i\mathcal{R}_{\beta,i}(u)=u+1-1 + \frac{u-1}{u^{\alpha_1}-1} = \frac{u^{\alpha_1  + 1}-1}{u^{\alpha_1}-1} = 0.$$
    Similarly, if $(E_i \cdot \sum\limits_{j \neq i } E_j) = 2$, then $\alpha_1 + \alpha_2 = 0$ and one finds 
    $$N_i\mathcal{R}_{\beta,i}(u) = u+1-2 + \frac{u-1}{u^{\alpha_1} -1} + \frac{u-1}{u^{\alpha_2}-1} = \frac{u^{\alpha_1+\alpha_2+1}-u^{\alpha_1+\alpha_2}-u+1}{(u^{\alpha_1}-1)(u^{\alpha_2}-1)}  = 0.$$
\end{rem}

\subsection{The real dual graph of a resolution}
\label{subsection 2.2}

\begin{defi}
\label{def 5}
    The real dual graph of the canonical embedded resolution $\sigma:(X,\sigma^{-1}(0)) \to (\mathbb{A}^2_{\mathbb{R}},0)$ is defined as the graph whose vertices are the set 
    $$\{~ i \in J_{\mathbb{R}} \mid E_{i} \text{ is an exceptional curve}~\}$$
    with an edge connecting vertices $i$ and $j$ if and only if $E_i$ and $E_j$ intersect (in which case the intersection is a single real point).
    In the real \textit{total} dual graph of the resolution, denoted $G$, each analytically irreducible component of the strict transform with a non-empty real locus is also represented by a circle, and an edge is added to connect it to the unique exceptional curve it intersects.
    To each vertex $i$, we also attach the number $\frac{\nu_i}{N_i} \in \mathbb{Q}_+$. We denote by $\mathcal{M}$ the set of minimal vertices, that is 
    $$\mathcal{M}= \{~ i \in J_{\mathbb{R}} \mid\frac{\nu_i}{N_i} = \underset{j \in J_{\mathbb{R}}}{\text{min}} \frac{\nu_j}{N_j}~\}$$
\end{defi}

\begin{ex}
\label{ex 3} 
   Let us consider the cusp as in Example \ref{ex 4}. Then the real total dual graph of the resolution is 
   \begin{figure}[H]
        \centering
        \includegraphics[scale=0.35]{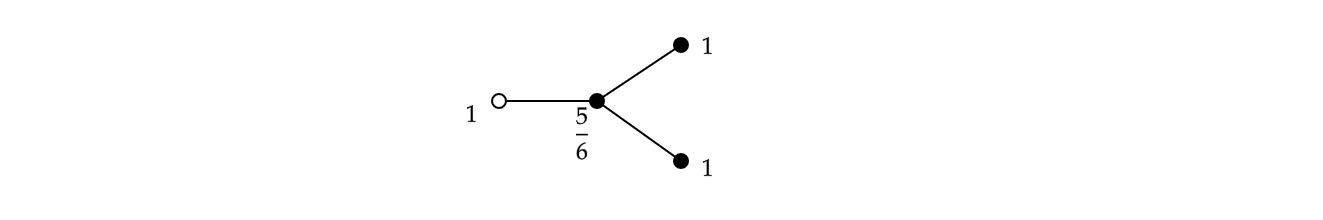}
    \end{figure}
    
\end{ex}

\begin{rem}
\label{rem 8}
    It is well known that the dual graph of the resolution is always a tree, i.e., a connected and acyclic graph. To see this, one can follow the evolution of the graph during the resolution process and check that, by blowing-up a point lying on an exceptional curve, the graph will be modified into a graph that is still connected. To obtain the real total dual graph of the resolution from the dual graph of the resolution, we simply add the vertices corresponding to the non-empty real loci of the analytically irreducible components of the strict transform and add edges representing the real intersection points between these components and the exceptional curves. In particular, this construction shows that that the real total dual graph is also a tree.
\end{rem}

We will picture an exceptional curve that intersects at least one other component at a real point as
     \begin{figure}[H]
        \centering
        \includegraphics[scale=0.35]{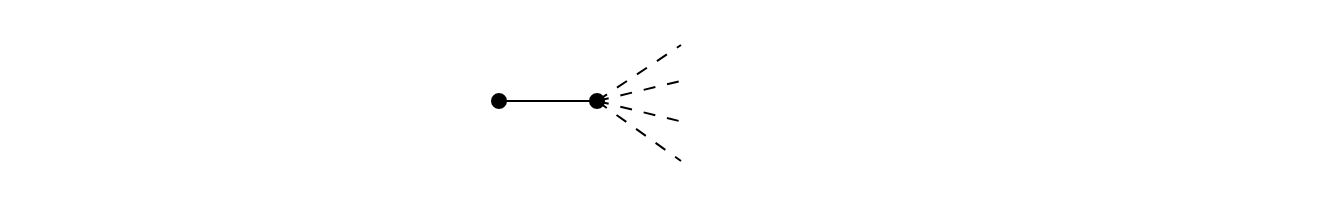}
    \end{figure}
    
The proposition below follows immediately from Corollary \ref{cor 1}.
\begin{prop}
\label{prop 4}
Assume that in $G$ one has
    \begin{figure}[H]
        \centering
        \includegraphics[scale=0.35]{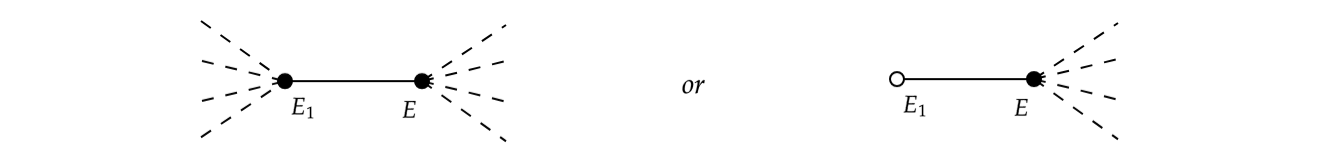}
    \end{figure}
with $\frac{\nu_1}{N_1} <  \frac{\nu}{N}$. Then $\frac{\nu}{N} < \frac{\nu_i}{N_i}$ for any other component $E_i$ that intersects $E$.
\end{prop}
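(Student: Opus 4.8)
The statement is a configuration in the total dual graph $G$: we have an exceptional curve $E$ meeting a component $E_1$ (possibly exceptional or a branch of the strict transform) with $\frac{\nu_1}{N_1} < \frac{\nu}{N}$, and we want to conclude that every \emph{other} component $E_i$ meeting $E$ satisfies $\frac{\nu}{N} < \frac{\nu_i}{N_i}$. The plan is to translate everything into the $\alpha$-coordinates attached to $E$, exactly as in Notation \ref{nota 2}, i.e. set $\alpha_j = \nu_j - \frac{\nu}{N} N_j$ for each component $E_j$ meeting $E$; then $\frac{\nu_j}{N_j} < \frac{\nu}{N} \iff \alpha_j < 0$ and $\frac{\nu_j}{N_j} > \frac{\nu}{N} \iff \alpha_j > 0$ (using $N_j > 0$). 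So the hypothesis reads $\alpha_1 < 0$, and the desired conclusion reads $\alpha_i > 0$ for all other $E_i$ meeting $E$.

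First I would dispose of the degenerate possibility $(E \cdot \sum_{j \neq E} E_j) \le 2$. If this intersection number is $1$, then $E$ meets no component other than $E_1$, and the conclusion is vacuous. If it is $2$, then by our running assumption (stated just before Notation \ref{nota 2}) $E$ meets two components at two real points, say $E_1$ and $E_2$, and Proposition \ref{prop 3} gives $\alpha_1 + \alpha_2 = 0$; since $\alpha_1 < 0$ we get $\alpha_2 > 0$, which is the claim. (This is precisely point (5) of Corollary \ref{cor 1}.) So the substantive case is $(E \cdot \sum_{j\neq E} E_j) = k + 2r \ge 3$. Here I invoke Corollary \ref{cor 1} directly: part (3) says that when $k + 2r \ge 3$ there is at most one index $j \in \llbracket 1,k\rrbracket$ with $\alpha_j \le 0$ and that $\alpha_j > 0$ for every $j \in \llbracket k+1, k+r\rrbracket$ (the complex intersection points). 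Since $\alpha_1 < 0$, this forces $E_1$ to be that unique index, whence $\alpha_j > 0$ for all $j \ge 2$ with $E_j$ meeting $E$ at a real point, and $\alpha_j > 0$ for all the complex ones as well. Translating back, $\frac{\nu_j}{N_j} > \frac{\nu}{N}$ for every component $E_j \ne E_1$ meeting $E$, which is exactly the assertion.

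The only mildly delicate point, and the one I expect to need the most care in the write-up, is making sure that the component $E_1$ appearing in the hypothesis is genuinely among the components indexed by $\llbracket 1,k\rrbracket$ (real intersection point with $E$) rather than a complex one — but this is guaranteed by the picture in the statement, where the edge from $E_1$ to $E$ in the graph $G$ means exactly that they meet at a real point (see Definition \ref{def 5}); and Corollary \ref{cor 1}(1) already tells us all complex-intersection $\alpha_j$ are $\ge 0$, so a negative $\alpha$ can only come from a real intersection in any case. Thus no real obstacle arises: the proposition is a direct packaging of Corollary \ref{cor 1}(3) and (5) in graph-theoretic language, once one has passed to the $\alpha$-coordinates and observed that the sign of $\alpha_j$ records the comparison between $\frac{\nu_j}{N_j}$ and $\frac{\nu}{N}$.
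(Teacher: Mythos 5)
Your proof is correct and follows the paper's own route: the paper simply states that Proposition \ref{prop 4} ``follows immediately from Corollary \ref{cor 1},'' and your write-up is exactly that deduction, spelled out via the sign of $\alpha_j = \nu_j - \frac{\nu}{N}N_j$ and parts (3) and (5) of the corollary. The care you take with the degenerate cases and with distinguishing real from complex intersection points is appropriate but introduces nothing beyond what the paper intends.
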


By induction one obtain the following corollary.

\begin{cor}
\label{cor 2} \begin{enumerate}
    \item Consider a path in $G$ that starts at a vertex in $\mathcal{M}$ and immediately leaves $\mathcal{M}$. Then the numbers $\frac{\nu_i}{N_i}$ strictly increase along the path.
    \item The minimal part $\mathcal{M}$ forms a connected subgraph of $G$.
    \end{enumerate}
\end{cor}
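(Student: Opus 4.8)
\textbf{Proof proposal for Corollary \ref{cor 2}.}

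The plan is to deduce both statements from Proposition \ref{prop 4} by an induction on the length of a path in the tree $G$. For the first point, consider a path $i_0, i_1, \dots, i_m$ in $G$ with $i_0 \in \mathcal{M}$ and $i_1 \notin \mathcal{M}$, so that $\frac{\nu_{i_0}}{N_{i_0}} < \frac{\nu_{i_1}}{N_{i_1}}$ by definition of $\mathcal{M}$. I would then argue inductively that $\frac{\nu_{i_{t-1}}}{N_{i_{t-1}}} < \frac{\nu_{i_t}}{N_{i_t}}$ for every $t$: at the vertex $E_{i_t}$, the previous vertex $E_{i_{t-1}}$ plays the role of $E_1$ in Proposition \ref{prop 4} (since $G$ is a tree, $E_{i_{t-1}}$ and $E_{i_{t+1}}$ are distinct neighbours of $E_{i_t}$), and the strict inequality $\frac{\nu_{i_{t-1}}}{N_{i_{t-1}}} < \frac{\nu_{i_t}}{N_{i_t}}$ coming from the induction hypothesis lets us conclude $\frac{\nu_{i_t}}{N_{i_t}} < \frac{\nu_{i_{t+1}}}{N_{i_{t+1}}}$. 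One small point to address is that Proposition \ref{prop 4} as pictured concerns an exceptional curve $E$ with at least three neighbours, so I would note that if $E_{i_t}$ had fewer neighbours the path could not continue past it (a strict transform component or a curve with $(E_{i_t}\cdot\sum_{j\neq i_t}E_j)\leq 2$ cannot be an interior vertex of such a path, by the running assumption that excludes Remark \ref{rem 11} and by the structure of $G$), and invoke Corollary \ref{cor 1}(5) to handle the borderline $k+2r=2$ case if needed.

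For the second point, I would show $\mathcal{M}$ is connected by contradiction. Suppose $i, i' \in \mathcal{M}$ lie in distinct connected components of the induced subgraph on $\mathcal{M}$; since $G$ is a tree (Remark \ref{rem 8}), there is a unique path from $i$ to $i'$ in $G$, and this path must leave $\mathcal{M}$ at some point, i.e. it contains a vertex $j \notin \mathcal{M}$ with a neighbour in $\mathcal{M}$ on the path. Walking along the path from $i$ we reach a first vertex not in $\mathcal{M}$; by the first point the values $\frac{\nu_\bullet}{N_\bullet}$ then strictly increase, so they can never return to the minimal value $\min_{l\in J_\mathbb{R}}\frac{\nu_l}{N_l}$, contradicting $i' \in \mathcal{M}$. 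Hence no such pair exists and $\mathcal{M}$ is connected.

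The main obstacle I anticipate is purely bookkeeping rather than conceptual: making sure that Proposition \ref{prop 4} genuinely applies at each interior vertex of the path, which requires checking that such a vertex is an exceptional curve meeting at least three components (so that both a predecessor and a successor exist as distinct neighbours). This is where the standing assumption excluding the configurations of Remark \ref{rem 11}, together with the observation that strict transform components and curves with intersection number $1$ are leaves of $G$ while curves with intersection number $2$ cannot sit in the interior of an increasing path (Corollary \ref{cor 1}(5)), does the work. Once that is in place, the two inductions are routine.
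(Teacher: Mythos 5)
Your proof is correct and is exactly the induction the paper has in mind: the paper itself gives no more detail than ``by induction'' from Proposition \ref{prop 4}, and your argument (base case from the definition of $\mathcal{M}$, inductive step via Proposition \ref{prop 4} at each interior vertex, then connectedness of $\mathcal{M}$ by the tree structure of $G$ and monotonicity) fills that in faithfully. One small internal inconsistency to fix: in your last paragraph you assert that a curve with intersection number $2$ ``cannot sit in the interior of an increasing path,'' whereas Corollary \ref{cor 1}(5) says precisely that it can and that the strict increase passes through it --- which is what you correctly invoke in your first paragraph.
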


\begin{thm}
\label{thm 4}
    Let $s_0 \in \mathbb{Q}$. Then $s_0$ is a pole of $Z_{top,0}(f;s)$ if and only if $s_0=-\frac{1}{N_i}$ for some irreducible component of the strict transform $E_i$ such that $E_i(\mathbb{R}) \neq \emptyset$ or $s_0 = -\frac{\nu_i}{N_i}$ for some exceptional curve $E_i$ satisfying $(E_i \cdot \sum\limits_{j \neq i} E_j) \geq 3$. 
\end{thm}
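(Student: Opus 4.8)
The plan is to follow Veys' strategy from the complex case, using the residue computations of Subsection~\ref{subsection 2.1} together with the combinatorics of the real total dual graph $G$ from Subsection~\ref{subsection 2.2}. First I would dispose of the easy direction. If $E_i$ is an irreducible component of the strict transform with $E_i(\mathbb{R})\neq\emptyset$, then $s_0=-1/N_i=-\nu_i/N_i$ is a candidate pole, and I claim it is genuine: near $s_0$ the only contributing terms to $Z_{top,0}$ involve factors $1/(\nu_i+sN_i)$ with $\nu_i/N_i=1/N_i$, and since $E_i$ meets exactly one other component (at a real point), the contribution $\mathcal{R}_{top,i}$ equals $\tfrac{1}{N_i}\cdot\tfrac{1}{\alpha_j}\neq 0$ where $\alpha_j\neq 0$; one must check no cancellation against other components with the same candidate pole, which follows because such $E_i$ is a leaf of $G$ with its own exceptional neighbour and Corollary~\ref{cor 2} forces the neighbour's quotient to differ. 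If instead $E_i$ is an exceptional curve with $(E_i\cdot\sum_{j\neq i}E_j)\geq 3$, Theorem~\ref{thm 3} gives $\mathcal{R}_{top,i}\neq 0$; the remaining task is again to rule out cancellation among several such components sharing the same $s_0$.

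For the converse (only if), I would argue that if $s_0$ is not of one of the two stated forms, then every component $E_i$ with $-\nu_i/N_i=s_0$ is either an exceptional curve with $(E_i\cdot\sum_{j\neq i}E_j)\leq 2$ — whose contribution vanishes by Theorem~\ref{thm 3} — or it is impossible for a strict-transform component to have $-1/N_i=s_0$ with the wrong numerics, so in fact $s_0$ is not even a candidate pole contributing anything. Combined with Proposition~\ref{prop 1} (which handles order-$2$ poles separately, and those are automatically of the allowed exceptional form when they occur via two components meeting at a real point), this shows $\mathrm{Res}(Z_{top,0};s_0)=0$ and $s_0$ is not a pole.

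The heart of the proof — and the main obstacle — is the non-cancellation statement: showing $\sum_i \mathcal{R}_{top,i}\neq 0$ where the sum runs over all $i\in J_{\mathbb{R}}$ with $-\nu_i/N_i=s_0$ and at least one $\mathcal{R}_{top,i}\neq 0$. The plan here is to exploit the tree structure of $G$ (Remark~\ref{rem 8}) and the monotonicity along paths (Corollary~\ref{cor 2}). Fix $s_0$ and let $S$ be the set of vertices $i$ with $\nu_i/N_i=-s_0$. By Proposition~\ref{prop 4} and its corollary, the quotients strictly increase as one moves away from $\mathcal{M}$, so $S$ occupies a controlled position in the tree: either $S\subset\mathcal{M}$, in which case $S$ is a connected subtree and I analyse the boundary edges leaving $S$ (each contributing a positive $\alpha_j$ by Corollary~\ref{cor 1}(3), forcing every $\mathcal{R}_{top,i}>0$ hence the sum is positive), or $S$ lies strictly above $\mathcal{M}$ and consists of isolated vertices or small subtrees whose neighbours all have strictly smaller or strictly larger quotients. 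In the isolated case there is a unique nonzero contribution and nothing to prove; in the connected-subtree-above-$\mathcal{M}$ case I would use Corollary~\ref{cor 1}(4) (the inequality $-\alpha_1<\min_{2\le j\le k}\alpha_j$) to show that the \emph{unique} possibly-negative contribution cannot outweigh the positive ones, mimicking Veys' sign-tracking argument.

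The delicate point is that, unlike the complex case, intersection points may be real or complex and the stratum Euler numbers $\mu(E_i^0\cap\sigma^{-1}(0)(\mathbb{R}))=2-k$ only count real intersections $k$, while Proposition~\ref{prop 3} mixes $k$ and $2r$; one must be careful that the combinatorial control coming from $G$ (which only records real intersections) still suffices, and here the running assumption after Remark~\ref{rem 11} — that $(E_i\cdot\sum_{j\neq i}E_j)=2$ forces two real intersection points — is exactly what keeps $G$ connected enough for the induction to go through. Once the sign analysis is organised as above, assembling the two directions gives the theorem, and the equivalent reformulation in terms of $\mathrm{Poles}(Z_{top,0}(f_{\mathbb{C}};s))\cap\{-\nu_i/N_i\mid i\in J_{\mathbb{R}}\}$ follows by comparing with Veys' Theorem~\ref{thm 5} applied to $f_{\mathbb{C}}$, noting that $J_\mathbb{R}$-indexed candidate poles of the real function are exactly the $f_\mathbb{C}$-candidate poles surviving the real-locus condition, and the numerical criterion $(E_i\cdot\sum_{j\neq i}E_j)\geq 3$ is the same integer in both settings.
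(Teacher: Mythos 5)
Your overall architecture (per-component contributions via Theorem~\ref{thm 3}, then non-cancellation via the tree structure of $G$) is the right one and matches the paper, and your "only if" direction is correct. But the heart of the argument — non-cancellation among several components sharing the same candidate pole $s_0$ — is where your proposal goes wrong. You describe the case where $S=\{i: \nu_i/N_i=-s_0\}$ lies strictly above $\mathcal{M}$ as having "positive ones" and a "unique possibly-negative contribution" that you would bound using Corollary~\ref{cor 1}(4). This misreads the sign structure. If $-s_0>\min_{j\in J_{\mathbb{R}}}\nu_j/N_j$, then \emph{every} $i\in S$ lies outside $\mathcal{M}$, and since $G$ is a connected tree, the elementary path from $\mathcal{M}$ to $i$ enters $i$ through a real neighbour $E_{j_0}$ with strictly smaller quotient (Corollary~\ref{cor 2}), giving $\alpha_{j_0}<0$; Theorem~\ref{thm 3}(2) then forces $\mathcal{R}_{top,i}<0$ whenever it is nonzero (and the strict-transform contributions are likewise negative, since their unique $\alpha_1<0$). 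So in this case there are no positive contributions at all, and nothing needs to be "outweighed." Symmetrically, if $-s_0$ equals the minimum, every $\alpha_j\geq 0$ around each $i\in S$ (with equality producing an order-$2$ pole handled by Proposition~\ref{prop 1}), so all nonzero contributions are $\geq 2/N_i>0$. The whole point is this clean dichotomy: for a fixed $s_0$ all nonzero contributions share a sign, determined by whether $-s_0$ is the minimal quotient or not.

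Moreover, the tool you invoke for your intended comparison, Corollary~\ref{cor 1}(4), is an inequality among the $\alpha_j$ attached to a \emph{single} exceptional curve $E_i$ (used inside the proof of Theorem~\ref{thm 3} via Lemma~\ref{lem 1}); it gives no quantitative comparison between the residue contributions $\mathcal{R}_{top,i}$ and $\mathcal{R}_{top,i'}$ of \emph{different} components, so even as stated your bounding step would not go through. Once you replace that step by the sign dichotomy above, the rest of your outline (the strict-transform leaves, the order-$2$ case via Proposition~\ref{prop 1}, and the reformulation through Veys' Theorem~\ref{thm 5}) assembles correctly.
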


Let us make a few remarks before we proceed to the proof of the theorem.

\begin{rem}
\label{rem 7}\begin{enumerate}
    \item Note that to determine whether an exceptional curve $E_i$ contributes to the residue of $Z_{top,0}(f;s)$ at the candidate pole $s_0=-\frac{\nu_i}{N_i}$, one must count both the $\textit{real and complex}$ intersection points of $E_i$ with the other components, even though $Z_{top,0}(f;s)$ is an invariant of the real locus of $f$. 
   \item The theorem above follows immediately from Theorem \ref{thm 3} in the case when there is only one contribution for $s_0$ induced by an exceptional curve $E_i$. In the general case, we will use the real total dual graph of the resolution to show that different contributions to the residue of $Z_{top,0}(f;s)$ at $s_0$ have the same sign so they cannot cancel each other out.
   \end{enumerate}
\end{rem}

\begin{proof}[ Proof of Theorem \ref{thm 4}]
    The fact that the condition is necessary already follows from Theorem \ref{thm 3}. Conversely, assume that there exists $i \in J_{\mathbb{R}}$ such that $s_0=-\frac{\nu_i}{N_i}$ and assume that the component $E_i$ intersects other components $E_1,\dots,E_k$ at $k$ real points. As before, let us denote $\alpha_j = \nu_j-\frac{\nu_i}{N_i}N_j$ for $j \in \llbracket 1, k \rrbracket$ and distinguish two cases.

    The first case is when $-s_0 = \underset{j \in J_{\mathbb{R}}}{\text{min}}\frac{\nu_j}{N_j}$.
    Assume that $E_i$ is an exceptional curve that satisfies $(E_i \cdot \sum\limits_{j \neq i} E_j) \geq 3$. If one of the $\alpha_j$ is zero, we have seen that $s_0$ is a pole of order 2. Otherwise, all $\alpha_j$ are strictly positive and by Theorem \ref{thm 3}, the contribution of $E_i$ to the residue of $Z_{top,0}(f;s)$ at $s_0$ is
    $$\mathcal{R}_{top,i} = \frac{1}{N_i}(2-k+\sum\limits_{j=1}^k \frac{1}{\alpha_j}) \geq \frac{2}{N_i} > 0 .$$

    Now assume that $E_i$ is a component of the strict transform. Again, if one of the $\alpha_j$ is zero, then $s_0$ is a pole of order 2. Otherwise, all $\alpha_j$ are strictly positive and the contribution of $E_i$ to the residue of $Z_{top,0}(f;s)$ at $s_0$ is 
    $$\mathcal{R}_{top,i} = \frac{1}{N_i}\sum\limits_{j=1}^k \frac{1}{\alpha_j} > 0 .$$
    All nonzero contributions to the residue of $Z_{top,0}(f;s)$ at $s_0$ are therefore strictly positive, thus $s_0$ is a pole of $Z_{top}(f;s)$.

    The second case is when $-s_0 > \underset{j \in J_{\mathbb{R}}}{\text{min}}\frac{\nu_j}{N_j}$. Assume that $E_i$ is an exceptional curve that satisfies $(E_i \cdot \sum\limits_{j \neq i} E_j) \geq 3$.
    By assumption, the vertex $i \in G$ is not in $\mathcal{M}$, and by connectedness, there exists an elementary path starting from a vertex of $\mathcal{M}$ and ending at $i$. By Corollary \ref{cor 2}, there exists $j_0 \in \llbracket 1, k \rrbracket$ such that $\alpha_{j_0} = \nu_{j_0}-\frac{\nu_i}{N_i}N_{j_0} < 0$. By Theorem \ref{thm 3}, the contribution of $E_i$ to the residue of $Z_{top,0}(f;s)$ at $s_0$ is:
    $$\mathcal{R}_{top,i} = \frac{1}{N_i}(2-k+\sum\limits_{j=1}^k \frac{1}{\alpha_j}) < 0 .$$
    Finally, if $E_i$ is a component of the strict transform, we know from the previous corollary that all $\alpha_j$ are strictly negative. The contribution of $E_i$ to the residue of $Z_{top,0}(f;s)$ at $s_0$ is
    $$\mathcal{R}_{top,i} = \frac{1}{N_i}\sum\limits_{j=1}^k \frac{1}{\alpha_j} < 0$$ 
    All nonzero contributions to the residue of $Z_{top,0}(f;s)$ at $s_0$ are therefore strictly negative, thus, $s_0$ is a pole of $Z_{top,0}(f;s)$.
\end{proof}

\begin{cor}
    \label{cor 4}
    We have the following equalities
    $$\text{Poles}(Z_{top,0}(f;s)) = \text{Poles}(Z_{\beta,0}(f;u^{-s})) = \text{Poles}(Z_{mot,0}(f;\mathbb{L}^{-s})).$$
    
\end{cor}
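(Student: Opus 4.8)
The plan is to build on the chain of inclusions already recorded in Remark~\ref{rem 5},
$$\text{Poles}(Z_{top,0}(f;s)) \subset \text{Poles}(Z_{\beta,0}(f;u^{-s})) \subset \text{Poles}(Z_{mot,0}(f;\mathbb{L}^{-s})),$$
and to close the loop by establishing the reverse inclusion $\text{Poles}(Z_{mot,0}(f;\mathbb{L}^{-s})) \subset \text{Poles}(Z_{top,0}(f;s))$, which forces the three sets to coincide. I would argue by contraposition. By Remark~\ref{rem 4} the candidate poles of $Z_{mot,0}$ all lie in $\{-\nu_i/N_i \mid i \in J_\mathbb{R}\}$, so I fix such a candidate $s_0$, assume $s_0 \notin \text{Poles}(Z_{top,0}(f;s))$, and aim to show that $s_0$ is not a pole of $Z_{mot,0}(f;\mathbb{L}^{-s})$ either.

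By Theorem~\ref{thm 4} this hypothesis means exactly that $s_0 \neq -1/N_i$ for every irreducible component $E_i$ of the strict transform with $E_i(\mathbb{R}) \neq \emptyset$, and that every exceptional curve $E_i$ with $s_0 = -\nu_i/N_i$ satisfies $(E_i \cdot \sum_{j\neq i} E_j) \leq 2$; moreover, by Proposition~\ref{prop 1}, no two distinct contributing components meet at a real point, so that in the Denef--Loeser formula of Theorem~\ref{thm 2} each singular term can be grouped with a unique contributing $E_i$ and the neighbouring factors $\frac{\mathbb{L}^{-\nu_j}T^{N_j}}{1-\mathbb{L}^{-\nu_j}T^{N_j}}$ ($\alpha_j\neq0$) are regular at $s_0$. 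I would then collect, exactly as the residue $\mathcal{R}_{top,i}$ was assembled in Notation~\ref{nota 2}, the terms carrying the factor $\frac{\mathbb{L}^{-\nu_i}T^{N_i}}{1-\mathbb{L}^{-\nu_i}T^{N_i}}$ for the finitely many $i\in J_\mathbb{R}$ with $s_0 = -\nu_i/N_i$. For such an exceptional $E_i$ one has $[E_i^0\cap\sigma^{-1}(0)(\mathbb{R})] = \mathbb{L}$ with $\alpha_1 = -1$ (Proposition~\ref{prop 3}) if it meets the rest of the divisor at a single real point, and $[E_i^0\cap\sigma^{-1}(0)(\mathbb{R})] = \mathbb{L}-1$ with $\alpha_1 + \alpha_2 = 0$ if it meets it at two real points. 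Factoring out $\frac{\mathbb{L}^{-\nu_i}T^{N_i}}{1-\mathbb{L}^{-\nu_i}T^{N_i}}$ and using the identity $1+\frac{w}{1-w}=\frac{1}{1-w}$, the cofactor, specialized along the pole locus $T^{N_i}=\mathbb{L}^{\nu_i}$ (where each $\mathbb{L}^{-\nu_j}T^{N_j}$ becomes $\mathbb{L}^{-\alpha_j}$), vanishes: it equals $(\mathbb{L}-1)\bigl(\mathbb{L}+(\mathbb{L}-1)\tfrac{\mathbb{L}}{1-\mathbb{L}}\bigr)=0$ in the one-point case and $(\mathbb{L}-1)^2\bigl(\tfrac{1}{1-\mathbb{L}^{-\alpha_1}}+\tfrac{\mathbb{L}^{\alpha_1}}{1-\mathbb{L}^{\alpha_1}}\bigr)=0$ in the two-point case. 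This is the verbatim motivic analogue of the computation in the Remark following Lemma~\ref{lem 1}, and it shows that the factor $1-\mathbb{L}^{-\nu}T^{N}$ attached to $s_0$ drops out of a common-denominator expression of $Z_{mot,0}(f;T)$.

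The one delicate point is that $\mathcal{M}_\mathbb{R}$ is not known to be an integral domain, so ``the grouped contribution vanishes along the pole locus'' does not by itself certify that $s_0$ is not a pole. However, every cancellation above already takes place inside the subring $\mathbb{Z}[\mathbb{L}^{1/m},\mathbb{L}^{-1/m}]$, for a suitable common denominator $m$ of the finitely many numbers $\nu_i/N_i$ and $\alpha_j$ involved, and this subring is an integral domain; concretely, after clearing denominators, the $s_0$-part of $Z_{mot,0}(f;T)$ is divisible by $1-\mathbb{L}^{-\nu}T^{N}$ with quotient in $\mathbb{Z}[\mathbb{L}^{\pm1/m}][T]$, which is precisely what regularity at $T=\mathbb{L}^{-s_0}$ means in the sense of \cite{rodrigues3}. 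Granting this, $\text{Poles}(Z_{mot,0}(f;\mathbb{L}^{-s})) \subset \text{Poles}(Z_{top,0}(f;s))$, and together with the inclusions of Remark~\ref{rem 5} the three pole sets coincide.

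I expect the main difficulty to be organizational rather than conceptual: carefully matching the motivic grouping produced by Theorem~\ref{thm 2} with the contribution $\mathcal{R}_{top,i}$ of Notation~\ref{nota 2} in the presence of several components sharing the ratio $\nu/N$ (here the absence of order-$2$ clashes, guaranteed by Proposition~\ref{prop 1}, is what keeps the grouping from overlapping), and making the definition of poles of $Z_{mot,0}$ from \cite{rodrigues3} explicit enough that the divisibility computed above genuinely certifies regularity at $T=\mathbb{L}^{-s_0}$.
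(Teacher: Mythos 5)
Your proposal is correct and follows essentially the same route as the paper: reduce to the reverse inclusion $\text{Poles}(Z_{mot,0}) \subset \text{Poles}(Z_{top,0})$, use Theorem \ref{thm 4} to see that a non-pole of $Z_{top,0}$ can only come from an exceptional curve with $(E_i \cdot \sum_{j\neq i}E_j) \le 2$, and then check via Proposition \ref{prop 3} (i.e. $\alpha_1=-1$, resp. $\alpha_1+\alpha_2=0$) that the grouped motivic contribution is divisible by $1-\mathbb{L}^{-\nu_i}T^{N_i}$, exactly as in the paper's two-case computation. Your explicit handling of the non-integral-domain issue by working in $\mathbb{Z}[\mathbb{L}^{\pm 1/m}][T]$ is a slightly more careful version of the paper's "a fortiori a multiple of $1-\mathbb{L}^{-\nu_i}T^{N_i}$" step, but it is the same argument.
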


\begin{proof} We already have the inclusions 
    $$\text{Poles}(Z_{top,0}(f;s)) \subset \text{Poles}(Z_{\beta,0}(f;u^{-s})) \subset \text{Poles}(Z_{mot,0}(f;\mathbb{L}^{-s}))$$
    so we only need to prove that $\text{Poles}(Z_{mot,0}(f;\mathbb{L}^{-s})) \subset \text{Poles}(Z_{top,0}(f;s))$. In other words, we take a candidate pole $s_0 \in \mathbb{Q}$ which is not a pole of $Z_{top,0}(f;s)$ and we must check that $s_0$ is not a pole of $Z_{mot,0}(f;s)$. By Theorem \ref{thm 4}, if $s_0$ is not a pole of $Z_{top,0}(f;s)$, then $s_0=-\frac{\nu_i}{N_i}$ for an exceptional curve $E_i$ that satisfies $(E_i \cdot \sum\limits_{j \neq i } E_j) < 3$.\\
    Assume that $(E_i \cdot \sum\limits_{j \neq i } E_j) = 1$. Then $E_i$ intersects another component $E_1$ at a single real point and one has $\alpha_1 = \nu_1 - \frac{\nu_i}{N_i}N_1 = -1$ by Proposition \ref{prop 3}. The contribution of $E_i$ to the residue of $Z_{mot,0}(f;T)$ at $s_0$ comes from the term
    $$(\mathbb{L}-1)\mathbb{L}\frac{\mathbb{L}^{-\nu_i}T^{N_i}}{1 - \mathbb{L}^{-\nu_i} T^{N_i}} + (\mathbb{L}-1)^2 \frac{\mathbb{L}^{-\nu_i}T^{N_i}}{1 - \mathbb{L}^{-\nu_i}T^{N_i}} \frac{\mathbb{L}^{-\nu_1} T^{N_1}}{1 - \mathbb{L}^{-\nu_1}T^{N_1}} $$ which equals 
    $$\frac{\mathbb{L}(\mathbb{L}-1)\mathbb{L}^{-\nu_i}T^{N_i}(1- \mathbb{L}^{-(\nu_1 + 1) } T^{N_1}) }{(1 - \mathbb{L}^{-\nu_i}T^{N_i}) ( 1 - \mathbb{L}^{-\nu_1}T^{N_1})}.$$
    The numerator is a multiple of the term $1 - \mathbb{L}^{-\frac{\nu_i}{N_i}N_1} T^{N_1} = 1 - (\mathbb{L}^{-\nu_i}T^{N_i})^{\frac{N_1}{N_i}} $ which is a fortiori a multiple of $1 - \mathbb{L}^{-\nu_i} T^{N_i}$ in K$_0(\mathbb{R}\text{Var})$, therefore $E_i$ does not contribute to the residue of $Z_{mot,0}(f;\mathbb{L}^{-s})$ at $s_0$.
    
    Now assume that $(E_i \cdot \sum\limits_{j \neq i } E_j) = 2$. Then $E_i$ intersects others components $E_1, E_2$ at two real points and one has $\alpha_1  + \alpha_2 = 0 $ by Proposition \ref{prop 3}, which gives $\nu_1 + \nu_2 = \frac{\nu_i}{N_i}( N_1 + N_2 )$. The contribution of $E_i$ to the residue of $Z_{mot,0}(f;T)$ at $s_0$ comes from the term

    $$(\mathbb{L}-1)^2\frac{\mathbb{L}^{-\nu_i}T^{N_i}}{1 - \mathbb{L}^{-\nu_i}T^{N_i}} + (\mathbb{L}-1)^2 \frac{\mathbb{L}^{-\nu_i}T^{N_i}}{1 - \mathbb{L}^{-\nu_i}T^{N_i}} \frac{\mathbb{L}^{-\nu_1}T^{N_1}}{1 - \mathbb{L}^{-\nu_1}T^{N_1}} + (\mathbb{L}-1)^2 \frac{\mathbb{L}^{-\nu_i}T^{N_i}}{1 - \mathbb{L}^{-\nu_i}T^{N_i}} \frac{\mathbb{L}^{-\nu_2}T^{N_2}}{1 - \mathbb{L}^{-\nu_2}T^{N_2}}$$
    which equals
    $$\frac{(\mathbb{L}-1)^2 \mathbb{L}^{-\nu_i}T^{N_i}(1-\mathbb{L}^{-(\nu_1+\nu_2)}T^{N_1 + N_2} )  }{(1 - \mathbb{L}^{-\nu_i}T^{N_i}) ( 1 - \mathbb{L}^{-\nu_1}T^{N_1} ) ( 1 - \mathbb{L}^{-\nu_2}T^{N_2} )  }$$
     Thus, the numerator is a multiple of the term $1 - \mathbb{L}^{-\frac{\nu_i}{N_i}(N_1+ N_2)}T^{N_1 + N_2 } = 1 - (\mathbb{L}^{-\nu_i}T^{N_i})^{\frac{N_1 + N_2}{N_i}} $ which is a fortiori a multiple of $1 - \mathbb{L}^{-\nu_i}T^{N_i}$ in K$_0(\mathbb{R}\text{Var})$. Therefore, $E_i$ does not contribute to the residue of $Z_{mot,0}(f;\mathbb{L}^{-s})$ at $s_0$.

\end{proof}

Comparing Theorem \ref{thm 4} with Theorem 4.3 of \cite{veys1} mentioned in the introduction, we obtain the following corollary.

\begin{cor}
    One has the following equality
    $$\text{Poles}(Z_{top,0}(f;s)) = \text{Poles}(Z_{top,0}(f_{\mathbb{C}};s)) \cap  \{~ -\frac{\nu_i}{N_i} \mid  i \in J_{\mathbb{R}}~\}$$
    and the same equality holds for the motivic zeta functions.
\end{cor}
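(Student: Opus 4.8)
The plan is to deduce the equality from the numerical criterion of Theorem~\ref{thm 4} together with Veys' criterion, Theorem~\ref{thm 5} (Theorem~4.3 of~\cite{veys1}), after checking that the two criteria rest on the same combinatorial data. The first step is the base change. The canonical embedded resolution is built by successively blowing up canonically determined centres (the loci where the total transform of $f$ fails to be normal crossings), and this construction commutes with the extension of scalars $\mathbb{R}\hookrightarrow\mathbb{C}$; hence the canonical embedded resolution of $f_{\mathbb{C}}$ is the complexification $\sigma_{\mathbb{C}}$ of $\sigma$, with the same numerical data $(\nu_i,N_i)_{i\in J}$, each $E_j$ becoming a (possibly reducible) divisor $E_{j,\mathbb{C}}$ all of whose geometric components carry $(\nu_j,N_j)$. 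Since all intersections among the $E_j$ are transverse, the scheme-theoretic intersection number $(E_j\cdot E_k)$ over $\mathbb{R}$ — which, as recalled in Subsection~\ref{subsection 2.1}, counts a non-real point of $E_j\cap E_k$ with multiplicity $2$ — equals the number of intersection points of $E_{j,\mathbb{C}}$ and $E_{k,\mathbb{C}}$ over $\mathbb{C}$; in particular, for an exceptional $E_j$ with $j\in J_{\mathbb{R}}$, so that $E_j$ is geometrically irreducible and $E_{j,\mathbb{C}}\cong\mathbb{P}^1_{\mathbb{C}}$, one has $(E_j\cdot\sum_{k\neq j}E_k)=(E_{j,\mathbb{C}}\cdot\sum_{k\neq j}E_{k,\mathbb{C}})$.

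The inclusion $\text{Poles}(Z_{top,0}(f;s))\subseteq\text{Poles}(Z_{top,0}(f_{\mathbb{C}};s))\cap\{-\nu_i/N_i\mid i\in J_{\mathbb{R}}\}$ is then immediate: if $s_0=-\nu_i/N_i$ is a pole of $Z_{top,0}(f;s)$ then $i\in J_{\mathbb{R}}$ and, by Theorem~\ref{thm 4}, either $E_i$ is a strict transform component or $E_i$ is exceptional with $(E_i\cdot\sum_{j\neq i}E_j)\geq 3$. In the first case a geometric component of $E_{i,\mathbb{C}}$ is a strict transform component of $f_{\mathbb{C}}$ with multiplicity $N_i$, and in the second $E_{i,\mathbb{C}}$ is an exceptional curve of $f_{\mathbb{C}}$ meeting at least three other components; either way Theorem~\ref{thm 5} shows $s_0$ is a pole of $Z_{top,0}(f_{\mathbb{C}};s)$.

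For the reverse inclusion I would take $s_0=-v$ with $v=\nu_{i_0}/N_{i_0}$ for some $i_0\in J_{\mathbb{R}}$ and with $s_0$ a pole of $Z_{top,0}(f_{\mathbb{C}};s)$. By Theorem~\ref{thm 5}, $v=\nu_j/N_j$ for some $j\in J$ such that a geometric component of $E_{j,\mathbb{C}}$ is a strict transform component of $f_{\mathbb{C}}$ or an exceptional curve meeting at least three others. If $E_j$ has a real point, then $j\in J_{\mathbb{R}}$ and $E_j$ is already ``good'' for Theorem~\ref{thm 4} — directly if $E_j$ is a strict transform component, and by the intersection-number identity if $E_j$ is exceptional — so $s_0$ is a pole of $Z_{top,0}(f;s)$. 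If $E_j$ has empty real locus, I would instead argue on the real total dual graph $G$ of Definition~\ref{def 5}: since $v$ is attained at $i_0\in J_{\mathbb{R}}$, the monotonicity of $\nu_i/N_i$ along the branches of $G$ (Corollary~\ref{cor 2}), combined with Propositions~\ref{prop 3} and~\ref{prop 2} and Corollary~\ref{cor 1} as in the proof of Theorem~\ref{thm 4}, should force a ``good'' component of $G$ (an exceptional curve with $(E_i\cdot\sum_{j\neq i}E_j)\geq 3$, or a strict transform component with a real point) of numerical value $v$, whence $s_0$ is a pole of $Z_{top,0}(f;s)$ by Theorem~\ref{thm 4}; the degenerate configurations of Remark~\ref{rem 11} are covered by the explicit computations there. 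Finally, the statement for the motivic zeta functions follows from the topological one by Corollary~\ref{cor 4} together with the fact that Veys' criterion also governs $\text{Poles}(Z_{mot,0}(f_{\mathbb{C}};\mathbb{L}^{-s}))$.

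I expect the main obstacle to be this last case of the reverse inclusion, namely ruling out a candidate real pole $-\nu_{i_0}/N_{i_0}$ with $i_0\in J_{\mathbb{R}}$ that is a \emph{false} pole of $Z_{top,0}(f;s)$ while being a \emph{genuine} pole of $Z_{top,0}(f_{\mathbb{C}};s)$ — the delicate scenario being that the latter is carried by a strict transform component of $f_{\mathbb{C}}$ with empty real locus, in which case the contributions attached to the value $v$ inside $G$ must be controlled very carefully using the standing hypothesis of this section.
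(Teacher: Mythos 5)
Your base-change discussion and your forward inclusion reproduce what the paper actually does: the corollary is obtained there with no argument beyond juxtaposing Theorem \ref{thm 4} with Veys' criterion (Theorem \ref{thm 5}), using that for $i\in J_{\mathbb{R}}$ the real intersection number $(E_i\cdot\sum_{j\neq i}E_j)$ (complex points counted twice) equals the number of intersection points of $E_{i,\mathbb{C}}$ with the other components. Up to that point you are on the paper's track, and the $\subseteq$ inclusion is fine.

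The reverse inclusion is where the real issue lies, and the scenario you single out as ``the main obstacle'' is not merely an obstacle: it occurs, so no argument can complete this step as stated. Take $f=x^2+y^4$. The canonical embedded resolution consists of two point blow-ups and yields $E_1(\nu_1,N_1)=(2,2)$, $E_2(3,4)$ and the strict transform $E_3(1,1)$ with $E_3(\mathbb{R})=\emptyset$; $E_1$ meets only $E_2$, at one real point, while $E_2$ meets $E_1$ at a real point and $E_3$ at a complex point, so $(E_1\cdot\sum_{j\neq 1}E_j)=1$, $(E_2\cdot\sum_{j\neq 2}E_j)=3$, and $f$ satisfies the standing hypotheses of Section \ref{section 2}. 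One computes
$$Z_{top,0}(f;s)=\frac{1}{2+2s}+\frac{1}{3+4s}+\frac{1}{(2+2s)(3+4s)}=\frac{3}{3+4s},$$
so $\text{Poles}(Z_{top,0}(f;s))=\{-\frac{3}{4}\}$, in agreement with Theorem \ref{thm 4} ($E_1$ is ``bad'', $E_3$ has empty real locus). But over $\mathbb{C}$ the strict transform splits into two components with $N=1$, so $-1$ is a pole of $Z_{top,0}(f_{\mathbb{C}};s)$ by Theorem \ref{thm 5} (indeed $Z_{top,0}(f_{\mathbb{C}};s)=\frac{3+s}{(1+s)(3+4s)}$), and $-1=-\nu_1/N_1$ with $1\in J_{\mathbb{R}}$. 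Hence the displayed right-hand side contains $-1$ while the left-hand side does not, and the defect propagates to the motivic version via Corollary \ref{cor 4}. The underlying point is that equality of the two sets of \emph{contributing components} (real-good $=$ complex-good intersected with $J_{\mathbb{R}}$) does not give equality of the two sets of \emph{values}, because a bad real component can share its candidate value $-\nu_i/N_i$ with a complex-good component lying outside $J_{\mathbb{R}}$. So your graph-theoretic hope of ``forcing a good component of $G$'' in the empty-real-locus case cannot work; the statement that does hold is Theorem \ref{thm 4} itself, and it coincides with the displayed equality only when no such coincidence of candidate values occurs.
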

    
\begin{ex}
\label{ex 2}
Take $f = (x^2+y^6)^2(x^2-y^3)^3$. By performing four successive blowings-up, we obtain an embedded resolution $\sigma :(X,\sigma^{-1}(0)) \to (\mathbb{A}^2_{\mathbb{R}},0)$ of $f$. Scheme-theoretically, the strict transform is the union of two smooth irreducible curves $E_5$ and $E_6$, where $E_5$ has only real points and $E_6$ has no real points. The resolution graph showing only real points is of the following form
    \begin{figure}[H]
        \centering
        \includegraphics[scale=0.35]{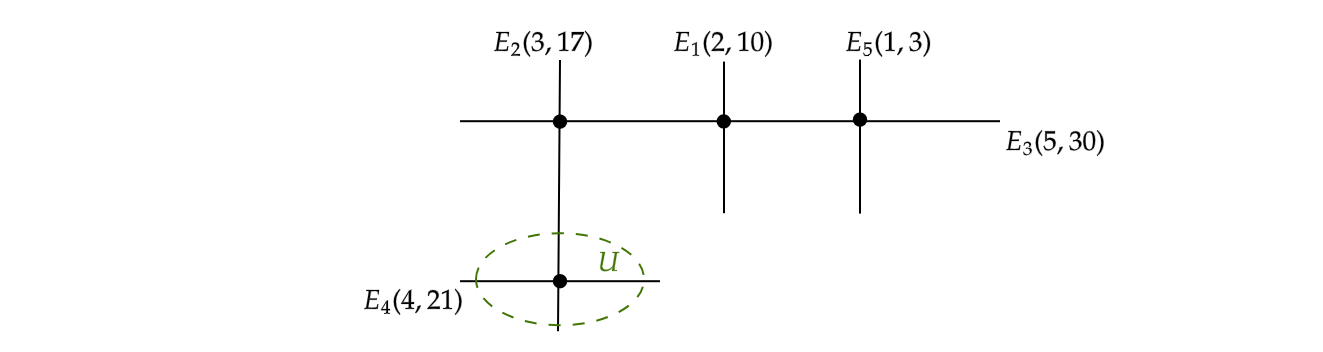}
    \end{figure}
so the candidate poles are $-\frac{4}{21},-\frac{3}{17}, -\frac{2}{10},-\frac{1}{3},-\frac{5}{30}$.
The $E_6$ component is contained in a Zariski open neighborhood $U \simeq \mathbb{A}^2_{\mathbb{R}}$ of $E_2 \cap E_4$ on which $f(\sigma(x_1,y_1)) = y_1^{21}x_1^{17} (x_1^2+1)^2$, so that 
$(E_4 \cdot \sum\limits_{j \neq 4}E_j) = (E_4 \cdot E_2) + (E_4 \cdot E_6)= 1+2 = 3$. By Theorem \ref{thm 4} the poles of $Z_{top,0}(f;s)$ are $-\frac{4}{21}, -\frac{1}{3}$ and $-\frac{5}{30}$. One can also compute $Z_{top,0}(f;s)$ using the resolution graph, and after simplification one has 
$$Z_{top}(f;s)=\frac{20+141s+216s^2}{(5+30s)(4+21s)(1+3s)}$$
which is consistent with our study of the poles. One can also check that 
Res$(Z_{top,0}; - \frac{5}{30}) > 0$, Res$(Z_{top,0}; -\frac{1}{3}) < 0$ and Res$(Z_{top,0};-\frac{4}{21}) <0$, which is also consistent with Theorem \ref{thm 4}, given that the real total dual graph of the resolution is 
    \begin{figure}[H]
        \centering
        \includegraphics[scale=0.35]{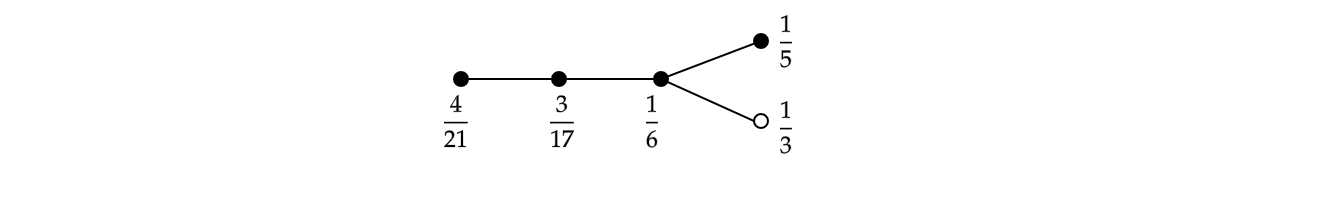}
    \end{figure}

In this example, the poles of $Z_{top,0}(f_{\mathbb{C}};s)$ are $-\frac{5}{30},-\frac{4}{21},-\frac{1}{3}$ and $-\frac{1}{2}$.

\end{ex}

\section{Poles of zeta functions with signs for curves}
\label{section 3}
In this section, we study the poles of the zeta functions with signs using the same approach as in Section \ref{section 2}. In particular, we show that every pole of the topological zeta functions with signs is also a pole of the naive topological zeta function. On the other hand, certain cancellations may occur at the level of the topological zeta functions, which leads us to study more precisely the residue at the level of the virtual Poincaré polynomial. We obtain a description of the poles in the case where there is at most one nonzero contribution for a given candidate pole.

Let us temporarily consider $f \in \mathbb{R}[x_1,\dots,x_d]$ and let $\sigma : (X, \sigma^{-1}(0)) \to (\mathbb{A}^d_{\mathbb{R}},0)$ be an embedded resolution of $f$. By definition, one has 
    $$Z_{top,0}^{\pm}(f;s)= \sum\limits_{ \emptyset \neq I \subset J} \mu(\widetilde{E}_I^{0,\pm} \cap \sigma^{-1}(0)(\mathbb{R})) \underset{i \in I}{\prod}\frac{1}{\nu_i+sN_i}$$
    and if $I$ is not included in $J_{\mathbb{R}}$, we have seen that $E_I^0 \cap \sigma^{-1}(0) ( \mathbb{R}) = \emptyset$ which implies that $\widetilde{E}_I^{0,\pm} \cap \sigma^{-1}(0)(\mathbb{R}) = \emptyset$. Therefore,
    $$Z_{top,0}^{\pm}(f;s)= \sum\limits_{ \emptyset \neq I \subset J_{\mathbb{R}}} \mu(\widetilde{E}_I^{0,\pm} \cap \sigma^{-1}(0)(\mathbb{R})) \underset{i \in I}{\prod}\frac{1}{\nu_i+sN_i}.$$

\begin{rem} 
    \label{rem 10}
    Let us denote the positive and negative parts of $f$ by $P(f)$ and $N(f)$ respectively, i.e.
    $$P(f) = \{ ~x \in X(\mathbb{R}) \mid (f \circ \sigma)(x) > 0 ~\} \text{ and } N(f) = \{~ x \in X(\mathbb{R}) \mid (f \circ \sigma)(x) < 0 ~\}.$$
    We then define the subsets $J_{\mathbb{R}}^{\pm} \subset J_{\mathbb{R}}$ by 
    $$J_{\mathbb{R}}^{+} = \{~ j \in J \mid E_{j}\ (\mathbb{R}) \cap \overline{P(f)} \neq \emptyset ~\} \text{ and } J_{\mathbb{R}}^{-} = \{~ j \in J \mid E_{j}\ (\mathbb{R}) \cap \overline{N(f)} \neq \emptyset ~\}.$$
    Let $I \subset J_{\mathbb{R}}$ and suppose that there exists $i \in I$ such that $i \notin J_{\mathbb{R}}^ {+}$, that is, $f \circ \sigma$ is negative in a neighborhood of $E_i(\mathbb{R})$. A fortiori, $f \circ \sigma$ is negative in the neighborhood of $E_I^0(\mathbb{R})$. We will see that $\widetilde{E}_I^{0,+}(\mathbb{R})$ is empty, and we only need to check this locally. Let $U$ be a Zariski open set such that $f \circ \sigma = u \prod\limits_{i \in I} y_i^{N_i}$ on $E_I^0 \cap U$ and where $u$ is a unit, i.e, $u$ does not vanish on $E_I^0 \cap U$. The fact that $f\circ \sigma$ is negative in a neighborhood of $E_I^0(\mathbb{R})$ implies that all $N_i$ are even and that $u < 0$ on $E_I^0(\mathbb{R})$. Denote $m = \text{gcd}(N_i)$, which is even. Then one has
    $$\widetilde{E}_I^{0,+}(\mathbb{R})\cap U \simeq R_U^{+} = \{ ~(x,t) \in (E_I^0(\mathbb{R}) \cap U)\times\mathbb{R} \mid t^mu(x) =  1 ~\} = \emptyset$$
    so we can write 
    $$Z_{top,0}^{+}(f;s)= \sum\limits_{ \emptyset \neq I \subset J_{\mathbb{R}}^+} \mu(\widetilde{E}_I^{0,+} \cap \sigma^{-1}(0)(\mathbb{R})) \underset{i \in I}{\prod}\frac{1}{\nu_i+sN_i}.$$
    Similarly, one has
    $$Z_{top,0}^{-}(f;s)= \sum\limits_{ \emptyset \neq I \subset J_{\mathbb{R}}^-} \mu(\widetilde{E}_I^{0,-} \cap \sigma^{-1}(0)(\mathbb{R})) \underset{i \in I}{\prod}\frac{1}{\nu_i+sN_i}.$$

    It follows that the set of candidate poles of $Z_{top,0}^{\pm}(f;s)$, and thus the set of candidate poles of $Z_{mot,0}^{\pm}(f;\mathbb{L}^{-s})$), is  
    $$\{~ -\frac{\nu_i}{N_i} \mid  i \in J_{\mathbb{R}}^{\pm}~\}.$$
\end{rem}

\begin{ex}
  Let us consider $f= y^2-x^3$ as in Example \ref{ex 4}. The resolution graph on which we illustrate the positive and negative parts of $f$ as well as the coverings $\widetilde{E}_i^{0,+}(\mathbb{R})$ is as follows.

    \begin{figure}[H]
        \centering
        \includegraphics[scale=0.35]{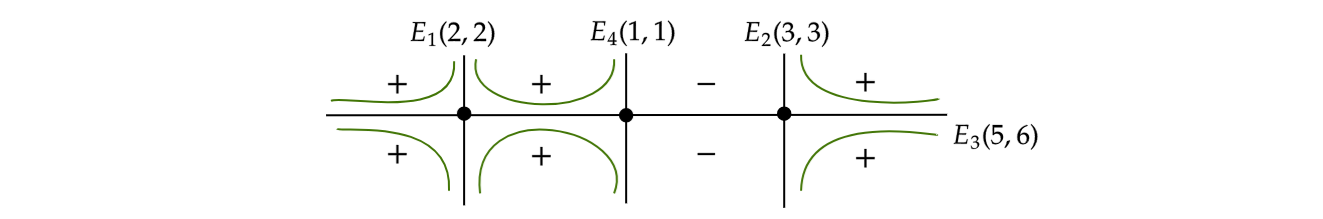}
    \end{figure}
On the intersections $E_{i,j}^0=E_i \cap E_j$, the term $\widetilde{E}_{i,j}^{0,\pm}(\mathbb{R})$ consists of $0,1$ or $2$ points depending on the parity of $\text{gcd}(N_i, N_j)$ and the sign of $f$ in a neighborhood of $E_i \cap E_j$. To compute $\widetilde{E}_i^{0,\pm}(\mathbb{R})$, one can use the local charts provided by the sequence of blowings-up.\\
For example, $E_1^0(\mathbb{R}) = \mathbb{P}^1(\mathbb{R)} \setminus \{~1 \text{ point } \}$ is contained in a single affine chart $U \simeq\mathbb{A}^2_{\mathbb{R}}$ where $f(\sigma(u_1,v_1)) = v_1^2(1-u_1^3v_1)$ with $E_1 = \{v_1 = 0\}$ and $w(u_1,v_1) = 1-u_1^3v_1$ is a unit, i.e., $w$ does not vanish on $E_1$. By definition, one has
$$\widetilde{E}_1^{0,+}(\mathbb{R}) = \{~ (u_1,t) \in \mathbb{R}^2 \mid t^2w(u_1,0) = 1~\} = \{~ (u_1,t) \in \mathbb{R}^2 \mid t^2 = 1~\}$$ 
which is isomorphic to two disjoint copies of $\mathbb{R}$. Hence $\beta(\widetilde{E}_1^{0,+}(\mathbb{R})) = 2 u$ and $\mu(\widetilde{E}_1^{0,+}(\mathbb{R})) = 2$. The other terms  $\widetilde{E}_i^{0,+}(\mathbb{R})$ can be computed similarly, yielding 
$$Z_{top,0}^{+}(f;s) = \frac{2}{2+2s} + \frac{1} {3+3s} - \frac{2}{5+6s} + \frac{2}{(2+2s)(5+6s)} + \frac{1}{(3+3s)(5+6s)} + \frac{1}{(1+s)(5+6s)}$$
which simplifies to
$$Z_{top,0}^{+}(f;s) = \frac{6s + 7}{(s+1)(5+6s)}.$$
It follows that the poles of $Z_{top,0}^{+}(f;s)$ are $-1$ and $-\frac{5}{6}$.
Similarly, one computes 
$$Z_{top,0}^{-}(f;s) = \frac{1}{3+3s} + \frac{1}{(3+3s)(5+6s)} + \frac{1}{(1+s)(5+6s)} = \frac{2s +3}{(s+1)(5+6s)}$$
so the poles of $Z_{top,0}^{-}(f;s)$ are also $-1$ and $-\frac{5}{6}$.

\end{ex}

\begin{rem}\begin{enumerate}
    \item It may happen that the set of poles of $Z_{top,0}^{\pm}(f;s)$ is strictly included in the set of poles of $Z_{top,0}(f;s)$. For instance, if $f \leq 0$, then $Z_{top,0}^+(f;s) = 0$, and therefore $\text{Poles}(Z_{top,0}^+(f;s)) = \emptyset$. It can also occur that $Z_{top,0}(f;s)$ and $Z_{top,0}^{\pm}(f;s)$ share a pole, but that the order of this pole differs between the two functions.
    \item Contrary to what intuition might suggest and what we observed in the previous example, it is generally not true that $Z_{top,0}^{+}(f;s) + Z_{top,0}^-(f;s) = 2Z_{top,0}(f;s)$. For example, consider $f = x^2+y^6$. After performing the resolution and computing the coverings $\widetilde{E}_i^{0,\pm}(\mathbb{R})$, one obtains 
$$Z_{top,0}^-(f;s)=0,~ Z_{top,0}^+(f;s) = \frac{4}{3+4s}   \text{ while } Z_{top,0}(f;s) = \frac{3}{3+4s}.$$
\end{enumerate}
\end{rem}

\subsection{Study of a contribution : the virtual Poincaré polynomial of a real curve of hyperelliptic type}
\label{subsection 3.1}

 \text{ }\\
From now on, we fix $f \in \mathbb{R}[x,y]$ and $\sigma : (X,\sigma^{-1}(0)) \to (\mathbb{A}^2_{\mathbb{R}},0)$ be the \textit{canonical embedded resolution} of $f$. One has 
$$Z_{top,0}^{\pm}(f;s) = \sum\limits_{ \emptyset \neq I \subset J_{\mathbb{R}}^{\pm}} \mu(\widetilde{E}^{0,\pm}_I \cap \sigma^{-1}(0)(\mathbb{R})) \underset{i \in I}{\prod}\frac{1}{\nu_i+sN_i}.$$
 but $E_I^0 = \emptyset$ as soon as $|I| > 2$ since the $E_i$ are simultaneously normal crossings. Therefore,
$$Z_{top,0}^{\pm}(f;s)= \sum\limits_{i \in J_{\mathbb{R}}^{\pm}} \frac{\mu(\widetilde{E}^{0,\pm}_i \cap \sigma^{-1}(0) (\mathbb{R}))}{\nu_i+sN_i} +  \sum\limits_{\{i,j\} \subset J_{\mathbb{R}}^{\pm}} \frac{\mu(\widetilde{E}^{0,\pm}_{i,j}(\mathbb{R}))}{(\nu_i+sN_i)(\nu_j+sN_j)}.$$
To clarify the ideas and slightly simplify the notation, we will focus on the positive topological zeta function $Z_{top,0}^+(f;s)$. Since $Z_{top,0}^-(f;s) = Z_{top,0}^+(-f;s)$, all the following results have analogous statements for the function $Z_{top,0}^-(f;s)$.\\
From the expression above, one can already see that any pole of $Z_{top,0}^{+}(f;s)$ has order at most 2.

\begin{prop}
\label{prop 5} 
    Let $s_0 \in \mathbb{Q}$. Then $s_0$ is a pole of order $2$ of $Z_{top,0}^{+}(f;s)$ if and only if there exist distinct $i,j \in J_{\mathbb{R}}^+$ such that $E_i(\mathbb{R}) \cap E_j(\mathbb{R}) \cap \overline{P(f)} \neq \emptyset$ and such that $s_0=-\frac{\nu_i}{N_i}=-\frac{\nu_j}{N_j}$.
\end{prop}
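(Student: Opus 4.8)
The plan is to mirror the proof of Proposition \ref{prop 1}, replacing the strata $E_i(\mathbb{R})\cap E_j(\mathbb{R})$ by the fibered pieces $\widetilde{E}^{0,+}_{i,j}(\mathbb{R})$. First I would start from the two-term expression for $Z_{top,0}^+(f;s)$ displayed just above the proposition and compute $\underset{s\to s_0}{\lim}(s-s_0)^2 Z_{top,0}^+(f;s)$. Each single-factor term contributes a pole of order at most $1$, so only the double-product terms survive; and among those, the term indexed by $\{i,j\}$ survives only if $s_0 = -\frac{\nu_i}{N_i} = -\frac{\nu_j}{N_j}$, in which case it contributes $\frac{\mu(\widetilde{E}^{0,+}_{i,j}(\mathbb{R}))}{N_iN_j}$. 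Hence
$$\underset{s\to s_0}{\lim}(s-s_0)^2 Z_{top,0}^+(f;s) = \sum_{\{i,j\}} \frac{\mu(\widetilde{E}^{0,+}_{i,j}(\mathbb{R}))}{N_iN_j},$$
where the sum runs over pairs $\{i,j\}\subset J_{\mathbb{R}}^+$ with $s_0 = -\frac{\nu_i}{N_i} = -\frac{\nu_j}{N_j}$, and $s_0$ is a pole of order $2$ if and only if this sum is nonzero.

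The core of the argument is then a local computation showing that every summand is $\geq 0$ and is strictly positive precisely when $E_i(\mathbb{R})\cap E_j(\mathbb{R})\cap\overline{P(f)}\neq\emptyset$. Since $i\neq j$ and $E_i,E_j$ meet transversally on the surface $X$, the stratum $E^0_{i,j}=E_i\cap E_j$ is a finite set of closed points, so $\widetilde{E}^{0,+}_{i,j}(\mathbb{R})$ is a finite set and $\mu$ of it equals its cardinality, which is automatically $\geq 0$. It then suffices to show that the fiber of $\widetilde{E}^{0,+}_{i,j}\to E^0_{i,j}$ over a real point $p$ is nonempty if and only if $p\in\overline{P(f)}$. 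Working in local coordinates $(y_i,y_j)$ at $p$ with $f\circ\sigma = u\,y_i^{N_i}y_j^{N_j}$, $u(p)\neq 0$, this fiber is $\{~t\in\mathbb{R}\mid t^m u(p) = 1~\}$ with $m=\text{gcd}(N_i,N_j)$, hence nonempty iff $u(p)>0$ or $m$ is odd; on the other hand the sign of $f\circ\sigma$ at generic nearby points is $\text{sign}(u(p))\cdot\text{sign}(y_i)^{N_i}\cdot\text{sign}(y_j)^{N_j}$, so $p\in\overline{P(f)}$ iff this can be made positive, which a short case-check over the parities of $(N_i,N_j)$ shows happens iff $u(p)>0$ or $m$ is odd. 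The two conditions coincide.

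Finally, since all summands are $\geq 0$, the sum vanishes if and only if each summand vanishes, i.e. if and only if for every pair $\{i,j\}$ with $s_0 = -\frac{\nu_i}{N_i} = -\frac{\nu_j}{N_j}$ one has $E_i(\mathbb{R})\cap E_j(\mathbb{R})\cap\overline{P(f)} = \emptyset$. Negating, $s_0$ is a pole of order $2$ if and only if there exist distinct $i,j\in J_{\mathbb{R}}^+$ with $s_0 = -\frac{\nu_i}{N_i} = -\frac{\nu_j}{N_j}$ and $E_i(\mathbb{R})\cap E_j(\mathbb{R})\cap\overline{P(f)}\neq\emptyset$, which is the asserted statement; the version for $Z_{top,0}^-$ follows by applying this to $-f$.

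I expect the only delicate point to be the local identification of the nonemptiness of the fiber over a real intersection point with the condition $p\in\overline{P(f)}$: one must treat the four parity cases for $(N_i,N_j)$ and keep in mind that $\overline{P(f)}$ is the Euclidean closure of $\{f\circ\sigma>0\}$, so that an intersection point where $f\circ\sigma$ vanishes to even order in both coordinates but with $u(p)<0$ is genuinely excluded — and correspondingly its fiber in $\widetilde{E}^{0,+}_{i,j}$ is empty. Everything else is a routine adaptation of the naive case treated in Proposition \ref{prop 1}.
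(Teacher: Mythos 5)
Your proposal is correct and follows exactly the paper's route: the paper's proof is a one-line reduction to Proposition \ref{prop 1} together with the observation that $\widetilde{E}_{i,j}^{0,+}(\mathbb{R})\neq\emptyset$ if and only if $E_i(\mathbb{R})\cap E_j(\mathbb{R})\cap\overline{P(f)}\neq\emptyset$, which is precisely the local fiber computation you carry out (and your parity check identifying ``$m$ odd or $u(p)>0$'' with membership in $\overline{P(f)}$ is accurate). You simply spell out details the paper leaves implicit.
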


\begin{proof}
    The proof is the same as that of Proposition \ref{prop 1}, noting that $\widetilde{E}_{i,j}^{0,+}(\mathbb{R})$ is non-empty if and only if $E_i(\mathbb{R}) \cap E_j(\mathbb{R}) \cap \overline{P(f)} \neq \emptyset$.
\end{proof}

\begin{rem}
Let us return to the two particular cases discussed in Remark \ref{rem 11}. Assume that $f$ is analytically equivalent to $ \lambda x^Ny^M$ with $\lambda \in \mathbb{R}^*$ and $N, M \in \mathbb{N}$. If $N$ or $M$ is odd, then
$$Z_{top,0}^+(f;s)=Z_{top,0}(f;s)=\frac{1}{(1+sN)(1+sM)}.$$
If both $N$ and $M$ are even, then 
$$Z_{top,0}^+(f;s) = 0 \text{ if } \lambda <0 \text{, and } Z_{top,0}^+(f;s)=\frac{2}{1+sN} \text{ if } \lambda>0.$$
Now, assume that there exists an exceptional curve $E_i$ such that $(E_i \cdot \sum\limits_{j \neq i } E_j) = 2$ and such that $E_i$ does not intersect any component at a real point. As we have seen, on an affine chart $U \simeq \mathbb{A}^2_{\mathbb{R}}$ of $\text{Bl}_0 \mathbb{A}^2_{\mathbb{R}}$, one can write $f(\sigma(x,y))= y^{2M}u(x,y)$, where $u(x,0) = (ax^2+bx+c)^M$ is such that $b^2-4ac < 0$. If $a < 0$ (that is, if $f \leq 0$), then $\widetilde{E}_i^{0,+}(\mathbb{R}) = \emptyset$, so
$$Z_{top,0}^+(f;s) = 0$$
If $a > 0$ (that is, if $f \geq 0$), then 
$$(\widetilde{E}_i^{0,+} \cap U )(\mathbb{R}) = \{~ (x,t) \in \mathbb{R}^2 ~|~ t^{2M}(ax^2+bx+c)^M =1 ~\}.$$
The change of variables $(u,v)=(xt,t)$ gives an isomorphism 
$$(\widetilde{E}_i^{0,+} \cap U )(\mathbb{R}) \simeq \{~ (u,v) \in \mathbb{R}^* \times \mathbb{R} ~|~ (au^2 +buv +cv^2)^M =1~\} = \{~ (u,v) \in \mathbb{R}^* \times \mathbb{R} ~|~ au^2 +buv +cv^2 =1~\}$$
because $au^2+buv+cv^2$ is positive on $\mathbb{R}^2$. Now, the curve defined by $au^2+buv+cv^2$ is an ellipse. In particular, it is a smooth compact curve homeomorphic to $\mathbb{S}^1$ which intersects the $v$ axis at two distinct points. Therefore
$$\beta((\widetilde{E}_i^{0,+} \cap U )(\mathbb{R})) = u+1-2=u-1.$$
On the other affine chart $V \simeq \mathbb{A}^2_{\mathbb{R}}$ of $\text{Bl}_0 \mathbb{A}^2_{\mathbb{R}}$, one also has $\beta((\widetilde{E}_i^{0,+} \cap V )(\mathbb{R})) =u-1$, and by additivity it follows that
$$\beta(\widetilde{E}_i^{0,+}(\mathbb{R})) = \beta((\widetilde{E}_i^{0,+} \cap U )(\mathbb{R})) + \beta((\widetilde{E}_i^{0,+} \cap V )(\mathbb{R})) - \beta((\widetilde{E}_i^{0,+} \cap U \cap V )(\mathbb{R}))$$
which gives 
$$\beta(\widetilde{E}_i^{0,+}(\mathbb{R})) = u-1 + u-1 - (u-3) = u+1.$$
Note that $\widetilde{E}_i^{0,+}(\mathbb{R})$ is a smooth compact curve homeomorphic to $\mathbb{S}^1$ and that the map $\widetilde{E}_i^{0,+} (\mathbb{R}) \to E_i^0(\mathbb{R})$ is a degree $2$ topological covering  of $\mathbb{S}^1$ by itself. One obtains
$$Z_{top,0}(f;s) = \frac{2}{2+2Ms} = \frac{1}{1+sM}.$$
\end{rem}

Throughout the rest of this section, we will assume that $f$ is not as in the above remark, which means that for all $i\in J_{\mathbb{R}}$, we have $(E_i \cdot \sum\limits_{j \neq i } E_j) \geq 1$ and that if $(E_i \cdot \sum\limits_{j \neq i} E_j) =2$, then $E_i$ intersects other components at two real points.

\begin{notation}
\label{nota 3}
As in \ref{nota 2}, let us fix $i \in J_{\mathbb{R}}^+$ and study the contribution of a component $E_i$ to the residue of $Z_{top,0}^+(f;s)$ at the candidate pole $s_0=-\frac{\nu_i}{N_i}$. We may assume that for all $j\in J_{\mathbb{R}}^+\setminus \{i\}$ such that $E_i(\mathbb{R}) \cap E_j(\mathbb{R}) \cap \overline{P(f) }\neq \emptyset$ one has $\frac{\nu_i}{N_i} \neq \frac{\nu_j}{N_j}$, otherwise we saw above that $s_0$ is a pole of order 2. We then truncate the function $Z_{top,0}^+(f;s)$ by keeping only the terms 
$$\frac{\mu(\widetilde{E}_i^{0,+} \cap \sigma^{-1}(0)(\mathbb{R}))}{\nu_i +sN_i} + \sum\limits_{j \neq i } \frac{\mu(\widetilde{E}^{0,+}_{i,j}(\mathbb{R}))}{(\nu_i+sN_i) (\nu_j+sN_j)}.$$
We denote by $\mathcal{R}_{top,i}^+$ the residue of this expression at $s_0$, that is,
$$\mathcal{R}_{top,i}^+ = \frac{1}{N_i}\left(\mu(\widetilde{E}_i^{0,+} \cap \sigma^{-1}(0)(\mathbb{R}))+\sum\limits_{j} \frac{\mu(\widetilde{E}^{0,+}_{i,j}(\mathbb{R}))}{\alpha_j}\right).$$
The residue of $Z_{top,0}^+(f;s)$ at $s_0$ is then given by
$$\text{Res}(Z_{top,0}^+;s_0) = \sum\limits_{i} \mathcal{R}_{top,i}^+$$
where the sum runs over all $i \in J_{\mathbb{R}}^+$ such that $s_0=-\frac{\nu_i}{N_i}$.
\end{notation}

If $E_i$ is an irreducible component of the strict transform, then $\widetilde{E}_i^{0,+} \cap \sigma^{-1}(0) (\mathbb{R}) = \emptyset$, since $\sigma^{-1}(0)$ is the union of the exceptional curves. Therefore,
$$\mathcal{R}_{top,i}^+ = \frac{1}{N_i}\sum\limits_{j} \frac{\mu(\widetilde{E}^{0,+}_{i,j}(\mathbb{R}))}{\alpha_j}.$$
Now assume that $E_i$ is an exceptional curve that intersects at $k$ real points other components $E_1,\dots,E_k$  and at $r$ complex points other components $E_{k+1},\dots,E_{k+r}$ so that $(E_i \cdot \sum\limits_{j\ne i } E_j) =k+2r.$ Then
$$\mathcal{R}_{top,i}^+ = \frac{1}{N_i} \left( \mu(\widetilde{E}_i^{0,+}(\mathbb{R})) + \sum\limits_{j =1}^k \frac{\mu(\widetilde{E}_{i,j}^{0,+}(\mathbb{R}))}{\alpha_j} \right) $$
where one must be careful that certain terms $\mu(\widetilde{E}_{i,j}^{0,+}(\mathbb{R}))$ may vanish.

For the zeta function defined at the level of the virtual Poincaré polynomial, the contribution of an exceptional curve $E_i$ to the residue of $Z_{\beta,0}^+(f;u^{-s})$ at $-\frac{\nu_i}{N_i}$ is given (up to an invertible depending only on $-\frac{\nu_i}{N_i}$) by 
$$\mathcal{R}_{\beta,i}^+(u)=  \frac{1}{N_i} \left( \beta(\widetilde{E}_i^{0,+}(\mathbb{R})) + \sum\limits_{j=1}^k  \beta(\widetilde{E}_{i,j}^{0,+}(\mathbb{R}))\frac{u-1}{u^{\alpha_j}-1}\right),$$
and it satisfies $\underset{u \to 1}{\lim}\mathcal{R}_{\beta,i}^+(u) = \mathcal{R}_{top,i}^+.$

\begin{rem}
    \label{rem 12} 
    We already know that $\beta(E_i^0(\mathbb{R})) = u+1-k$ so that $\mu(E_i^0(\mathbb{R})) = 2-k$. However, the computation of $\beta(\widetilde{E}_i^{0,+}(\mathbb{R}))$ is, in general, more intricate for the following reasons.
    Let $U\simeq \mathbb{R}^2$ be a Zariski open set on which $f(\sigma(x,y)) = u(x,y)y^{N_i}$. Then
    $$E_i(\mathbb{R}) \cap U = \{~ (x,y) \in \mathbb{R}^2 ~ | ~ y = 0~\} \text { and } E_i^0(\mathbb{R}) \cap U = \{~(x,y) \in \mathbb{R}^2 ~ | ~ y=0  \text{ and } u(x,0) \neq 0~\}$$
    that is,
    $$ E_i^0(\mathbb{R}) \cap U \simeq \{~x\in \mathbb{R} ~ | ~u(x,0) \neq 0~\}.$$ 
    By definition, one has 
    $$\widetilde{E}_i^{0,+}(\mathbb{R}) \cap U \simeq \{~(x,y,t) \in  (E_i^0(\mathbb{R}) \cap U) \times \mathbb{R} ~|~ u(x,y)t^{N_i} = 1 ~\} $$
    that is,
    $$\widetilde{E}_i^{0,+}(\mathbb{R}) \cap U \simeq \{~(x,t) \in  \mathbb{R}^2 ~|~ u(x,0)t^{N_i} = 1 ~\}.$$
    The projection onto the first factor $\widetilde{E}_i^{0,+}(\mathbb{R}) \cap U \to E_i^0(\mathbb{R}) \cap U$ is then a locally trivial covering for the Euclidean topology. After gluing, one obtains a covering $\widetilde{E}_i^{0,+}(\mathbb{R}) \to E_i^0(\mathbb{R})$ that is locally trivial for the Euclidean topology, but in general not locally trivial for the Zariski topology. In particular, there is no obvious relation between $[\widetilde{E}_i^{0,+}(\mathbb{R})]$ and $[E_i^0(\mathbb{R})]$ in the Grothendieck ring K$_0(\mathbb{R}\text{Var})$. Since the virtual Poincaré polynomial is not a topological invariant, there is also no obvious connection between $\beta(\widetilde{E}_i^{0,+}(\mathbb{R}))$ and $\beta(E_i^0(\mathbb{R}))$.\\
    Note also that the base $E_i^0(\mathbb{R})$ is generally not connected, so that the fiber of this covering is typically not constant and depends on the parity of $N_i$ and on the sign of $f$ in a neighborhood of the connected components of $E_i^0(\mathbb{R})$. More precisely:\\
    Assume first that $N_i$ is odd. Then the projection 
    $$(x,t) \in \widetilde{E}_i^{0,+}(\mathbb{R})\cap U \mapsto x \in E_i^0(\mathbb{R}) \cap U$$ is a regular homeomorphism, with inverse given by $x \mapsto (x, \sqrt[N_i]{u(x,0)^{-1}}~)$. Since $\beta$ is invariant under regular homeomorphisms (see \cite{mccrory2} Proposition 4.3), one has $\beta(\widetilde{E}_i^{0,+}(\mathbb{R}) \cap U) = \beta(E_i^0(\mathbb{R}) \cap U)$. Since $E_i^0(\mathbb{R})$ is covered by two open sets $U$, additivity yields $\beta(\widetilde{E}_i^{0,+}(\mathbb{R})) = \beta(E_i^0(\mathbb{R}))$.\\
    Now assume that $N_i$ is even. Then the projection $\widetilde{E}_i^{0,+}(\mathbb{R})\cap U \to  E_i^0(\mathbb{R}) \cap U$ is a covering of degree 0 (resp. degree 2) on the connected components of $E_i^0(\mathbb{R}) \cap U$ on which $u(x,0) < 0$ (resp. $u(x,0) >0$), that is, over the connected components of in whose neighborhood $f \circ \sigma$ is negative (resp. positive). Thus, when $N_i$ is even, there is in general no global trivialization of the covering $\widetilde{E}_i^{0,+}(\mathbb{R})\cap U \to  E_i^0(\mathbb{R}) \cap U$ nor, a fortiori, of the covering $\widetilde{E}_i^{0,+}(\mathbb{R}) \to  E_i^0(\mathbb{R})$ , even when $f\circ \sigma$ is positive in the neighborhood of $E_i^0(\mathbb{R}) \cap U$. Therefore, one cannot directly deduce the value of $\beta(\widetilde{E}_i^{0,+}(\mathbb{R}))$ from that of $\beta(E_i^0(\mathbb{R}))$.
\end{rem}

\begin{prop}
\label{prop 6}
    Assume that  $E_i$ satisfies $(E_i \cdot \sum\limits_{j \neq i} E_j) <3$. Then $\mathcal{R}_{top,i}^+ = 0$. Moreover, at the level of the virtual Poincaré polynomial, one also has $\mathcal{R}_{\beta,i}^+(u)= 0$.
\end{prop}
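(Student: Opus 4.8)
The plan is to reduce the statement to the computation of a single contribution and to show, exactly as in the naive case (the remark following Theorem~\ref{thm 3}), that the contribution of $E_i$ vanishes identically already at the level of the virtual Poincaré polynomial; since $\mathcal{R}_{top,i}^+=\lim_{u\to 1}\mathcal{R}_{\beta,i}^+(u)$, it is enough to prove $\mathcal{R}_{\beta,i}^+(u)\equiv 0$. By the standing assumption made after Remark~\ref{rem 11}, an exceptional curve with $(E_i\cdot\sum_{j\neq i}E_j)<3$ has $(E_i\cdot\sum_{j\neq i}E_j)\in\{1,2\}$, meets no other component at a complex point (so $r=0$) and meets the others only at real points; thus $k=1$ or $k=2$. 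By Proposition~\ref{prop 3} this gives $\alpha_1=-1$ when $k=1$, and $\alpha_1+\alpha_2=0$ when $k=2$ (with $\alpha_1\neq 0$, otherwise $s_0$ is an order-two pole, covered by Proposition~\ref{prop 5}).

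For $k=1$: here $E_i^0=E_i\setminus\{p\}\cong\mathbb{A}^1_{\mathbb{R}}$ as a variety. The unit $u$ defining $\widetilde{E}_i^{0,+}$ restricts to a regular function on $E_i^0$ with no zeros, in particular no complex zeros (these would be complex intersection points of $E_i$, and $r=0$); but a regular function on $\mathbb{A}^1_{\mathbb{R}}$ without complex zeros is a nonzero constant. Hence $\widetilde{E}_i^{0,+}(\mathbb{R})\cong E_i^0(\mathbb{R})\times F$, where $F$ is the finite set of real $N_i$-th roots of $u^{-1}$, so $\beta(\widetilde{E}_i^{0,+}(\mathbb{R}))=|F|\cdot\beta(E_i^0(\mathbb{R}))=|F|\,u$. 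A short connectedness argument shows that $N_1$ must be even when $N_i$ is even: otherwise the restriction to $E_i^0(\mathbb{R})$ of a local equation of $E_1$ would change sign across $p$ along the connected arc $E_i^0(\mathbb{R})$, contradicting the constancy of $u|_{E_i^0(\mathbb{R})}$. Comparing $u|_{E_i^0}$ with the unit near $p$ (the discrepancy being an even power) then shows that the sign of the constant $u|_{E_i^0}$ equals the sign of that unit at $p$, so $\beta(\widetilde{E}_{i,1}^{0,+}(\mathbb{R}))$, the number of real $\gcd(N_i,N_1)$-th roots of that sign condition, equals $|F|$; since $\tfrac{u-1}{u^{\alpha_1}-1}=\tfrac{u-1}{u^{-1}-1}=-u$ we obtain $N_i\mathcal{R}_{\beta,i}^+(u)=|F|\,u-|F|\,u=0$.

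For $k=2$: now $E_i^0=E_i\setminus\{p_1,p_2\}\cong\mathbb{G}_{m,\mathbb{R}}$. Since $\text{div}(f\circ\sigma)$ is principal one has $N_iE_i^2+N_1+N_2=0$, so $N_i\mid N_1+N_2$ and in particular $N_1\equiv N_2\pmod 2$ when $N_i$ is even; this, together with the fact that the sign of $u$ along $E_i^0(\mathbb{R})$ is the same near $p_1$ and near $p_2$ (each of the two arcs of $E_i^0(\mathbb{R})$ runs from $p_1$ to $p_2$ and carries a nowhere-vanishing $u$), forces $\beta(\widetilde{E}_{i,1}^{0,+}(\mathbb{R}))=\beta(\widetilde{E}_{i,2}^{0,+}(\mathbb{R}))=:d$, where $d\in\{0,1,2\}$ is the degree of the covering $\widetilde{E}_i^{0,+}(\mathbb{R})\to E_i^0(\mathbb{R})$. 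The crux is then the identity $\beta(\widetilde{E}_i^{0,+}(\mathbb{R}))=d\,\beta(E_i^0(\mathbb{R}))=d(u-1)$. To prove it I would extract $g=\gcd(N_i,N_1)$ and identify $\widetilde{E}_i^{0,+}(\mathbb{R})$ with a disjoint union of $d$ affine curves of the form $\{x^{a}t^{b}=\gamma\}(\mathbb{R})$ with $\gcd(a,b)=1$ and $\gamma\in\mathbb{R}^{*}$, each of which is a split torus $\mathbb{G}_{m,\mathbb{R}}$ (its real locus being non-compact) and hence has virtual Poincaré polynomial $u-1$; this is precisely the computation of the virtual Poincaré polynomial of a real curve of hyperelliptic type around which the rest of this subsection is organised. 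Granting it, and using $\tfrac{1}{u^{\alpha_1}-1}+\tfrac{1}{u^{-\alpha_1}-1}=-1$, we get $N_i\mathcal{R}_{\beta,i}^+(u)=d\bigl[(u-1)+(u-1)(-1)\bigr]=0$. In both cases $\mathcal{R}_{top,i}^+=\lim_{u\to 1}\mathcal{R}_{\beta,i}^+(u)=0$.

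The hard part is the $k=2$ computation of $\beta(\widetilde{E}_i^{0,+}(\mathbb{R}))$: because $\beta$ is not a topological invariant and the covering $\widetilde{E}_i^{0,+}(\mathbb{R})\to E_i^0(\mathbb{R})$ is not Zariski-locally trivial (Remark~\ref{rem 12}), one cannot simply multiply $\beta(E_i^0(\mathbb{R}))$ by the covering degree, and an explicit algebraic description of the covering — in particular checking that each of its components is a split form of $\mathbb{G}_m$ rather than the compact one — is unavoidable. The arithmetic input that makes all contributions telescope as in the naive case is the divisibility $N_i\mid N_1+N_2$, which is what yields $\beta(\widetilde{E}_{i,1}^{0,+}(\mathbb{R}))=\beta(\widetilde{E}_{i,2}^{0,+}(\mathbb{R}))$.
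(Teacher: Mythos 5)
Your proof is correct and follows the same overall route as the paper's: reduce everything to showing $\mathcal{R}_{\beta,i}^+(u)\equiv 0$, split into $k=1$ and $k=2$ (with $r=0$ by the standing assumption), compute $\beta(\widetilde{E}_i^{0,+}(\mathbb{R}))$ and the $\beta(\widetilde{E}_{i,j}^{0,+}(\mathbb{R}))$ on an affine chart where the unit restricts to $\lambda$ or $\lambda x^{N_1}$, and conclude with the identities $\tfrac{u-1}{u^{-1}-1}=-u$ and $\tfrac{1}{u^{\alpha}-1}+\tfrac{1}{u^{-\alpha}-1}=-1$. Where you genuinely differ is in how the matching of fiber counts is justified. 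The paper argues case by case on the parities of $N_i$ and $N_1$, reads the fibers of $\widetilde{E}_{i,j}^{0,+}$ off local pictures, and proves $\beta(\{x^{2m}t^{2p}=1\})=2(u-1)$ by induction on $m$ (Lemma \ref{lem 2}); you instead derive the parity constraints ($N_1$ even when $N_i$ is even and $k=1$; $N_1\equiv N_2\pmod 2$ when $k=2$, via $N_iE_i^2+N_1+N_2=0$) from the principal-divisor relation and the sign-constancy of the unit along the arcs of $E_i^0(\mathbb{R})$, and you obtain $\beta(\widetilde{E}_i^{0,+}(\mathbb{R}))$ in one stroke by decomposing $\{x^{N_1}t^{N_i}=\lambda^{-1}\}$ into $d$ copies of a split $\mathbb{G}_m$ indexed by the real $\gcd(N_i,N_1)$-th roots of $\lambda^{-1}$ --- which is exactly the count giving $\beta(\widetilde{E}_{i,1}^{0,+}(\mathbb{R}))$. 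That uniform bookkeeping replaces the paper's induction and its figure-based fiber counts, and it buys a cleaner explanation of \emph{why} the contributions telescope. One small imprecision: calling $d$ ``the degree of the covering $\widetilde{E}_i^{0,+}(\mathbb{R})\to E_i^0(\mathbb{R})$'' fails when $N_i$ is even and $N_1,N_2$ are odd, since the covering then has degree $2$ over one arc of $E_i^0(\mathbb{R})$ and degree $0$ over the other while $d=1$; this is harmless because the identity you actually use is $\beta(\widetilde{E}_i^{0,+}(\mathbb{R}))=d(u-1)$ with $d$ the number of components of your torus decomposition, and that is correct.
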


\begin{proof}
    Let us first assume that $(E_i \cdot \sum\limits_{j \neq i } E_j) = 1$, that is, $E_i$ intersects another component $E_1$ at a real point, and one has $\alpha_1 = -1$ by Proposition \ref{prop 3}. Assume first that $N_i$ is odd. As observed in the remark above, one has $\beta(\widetilde{E}_i^{0,+} (\mathbb{R})) = \beta(E_i^0(\mathbb{R}))=u$,  and $\widetilde{E}_{i,1}^{0,+}$ consists of a single point, since $ \gcd(N_i, N_1)$ is odd. It follows that 
    $$N_i\mathcal{R}_{\beta,i}^+(u) =u + \frac{u-1}{u^{\alpha_1}-1} = \frac{u^{\alpha_1  + 1}-1}{u^{\alpha_1}-1} = N_i\mathcal{R}_{\beta,i}(u) =  0.$$
    Now assume that $N_i$ is even. Since $E_i \simeq \mathbb{P}^1_{\mathbb{R}}$, we may assume, after an affine change of coordinates, that the intersection point $E_1 \cap E_i$ lies at infinity. Thus, $E_i^0$ is contained in a single affine chart $U \simeq \mathbb{A}^2_{\mathbb{R}}$ in which $E_i$ does not intersect any component, neither at real nor at complex points. On $U$, one can therefore write $f(\sigma(x,y)) = y^{N_i}u(x,y)$, so that 
    $$E_i^0 = E_i^0 \cap U = E_i \cap U  = \{~y  =0~\} \simeq \mathbb{A}^1_{\mathbb{R}}$$ 
    and $u$ is a unit, that is, $u$ does not vanish on $E_i^0 \cap U$. Equivalently, the polynomial $u(x,0) \in \mathbb{R}[x]$ has no real or complex roots, hence $u(x,0)$ is equal to a constant $\lambda \in \mathbb{R}$. Since $i \in J_{\mathbb{R}}^+$, we know that $E_i^0(\mathbb{R})$ intersects $\overline{P(f)}$, and therefore $\lambda > 0$. By definition, 
    $$\widetilde{E}_i^{0,+}(\mathbb{R}) = (\widetilde{E}_i^{0,+} \cap U )(\mathbb{R}) \simeq \{~(x,t) \in  \mathbb{R}^2 ~|~ u(x,0)t^{N_i} = 1 ~\}$$
    that is,
    $$\widetilde{E}_i^{0,+}(\mathbb{R}) \simeq \{~(x,t) \in  \mathbb{R}^2 ~|~ \lambda t^{N_i} = 1 ~\} = \{~(x,t) \in  \mathbb{R}^2 ~|~ t = \pm \sqrt[N_i]{\lambda^{-1}} ~\} \simeq \mathbb{R} \sqcup \mathbb{R}.$$
    By additivity, it follows that $\beta(\widetilde{E}_i^{0,+}(\mathbb{R}))= 2u$. In a neighborhood of the intersection point $E_i \cap E_1$, the situation is as follows

    \begin{figure}[H]
        \centering
        \includegraphics[scale=0.35]{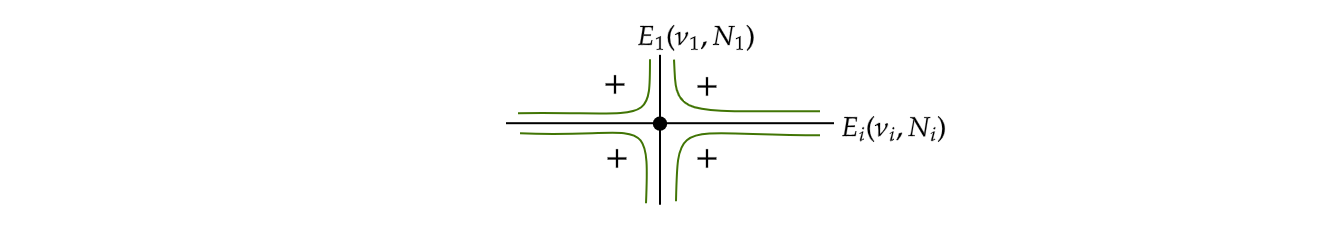}
    \end{figure}
and $\widetilde{E}_{i,1}^{0,+}$ consists of two points. It follows that 
$$N_i\mathcal{R}_{\beta,i}^+(u) = 2u + 2\frac{u-1}{u^{\alpha_1}-1} = 2\frac{u^{\alpha_1  + 1}-1}{u^{\alpha_1}-1} = 2N_i\mathcal{R}_{\beta,i}(u) = 0.$$

    Let us now assume that $(E_i \cdot \sum\limits_{j \neq i } E_j) = 2$, that is, $E_i$ intersects other components $E_1, E_2$ at two real points. By Proposition \ref{prop 3}, one has $\alpha_1 + \alpha_2 = 0$. Assume first that $N_i$ is odd. As seen above, one has $\beta(\widetilde{E}_i^{0,+}(\mathbb{R})) = \beta(E_i^0(\mathbb{R}))=u-1$ and both $\widetilde{E}_{i,1}^{0,+}, \widetilde{E}_{i,2}^{0,+}$ consist of a single point. Therefore
    $$N_i\mathcal{R}_{\beta,i}^+(u) =  u-1 + \frac{u-1}{u^{\alpha_1} -1} + \frac{u-1}{u^{\alpha_2}-1} = N_i\mathcal{R}_{\beta,i}(u) = 0.$$
    
    Suppose now that $N_i$ is even. As before, we may assume that the intersection point $E_i \cap E_2$ lies at infinity, so that $E_i^0$ is contained in a single affine chart $U \simeq \mathbb{A}^2_{\mathbb{R}}$ in which $E_i$ intersects the component $E_1$ at a real point. By performing a translation, we may further assume that $E_i \cap E_1$ is the origin. On $U$, one can then write $f(\sigma(x,y)) = y^{N_i}u(x,y)$ where 
    $$E_i^0 = E_i^0 \cap U = (E_i \setminus E_i \cap E_1) \cap U  = \{~y  =0 \text{ and } x \neq 0~\} \simeq \mathbb{A}^1_{\mathbb{R}}\setminus \{ 0 \}$$
    and where $u$ does not vanish on $E_i^0 \cap U$. Equivalently, the polynomial $u(x,0) \in \mathbb{R}[x]$ vanishes only at the origin (including complex roots). Therefore, $u(x,0)$ is of the form  $\lambda x^{N_1}$ for some $\lambda  \in \mathbb{R}^*$. By definition,
    $$\widetilde{E}_i^{0,+}(\mathbb{R}) \simeq \{~(x,t) \in  \mathbb{R}^2 ~|~ \lambda  x^{N_1}t^{N_i} = 1 ~\}.$$
    If $N_1$ is odd, the projection $(x,t) \in \widetilde{E}_i^{0,+}(\mathbb{R}) \mapsto t \in \mathbb{R}^*$ is a regular homeomorphism, with inverse given by $t \mapsto (\sqrt[N_1]{(\lambda t^{N_i})^{-1}}~ , t ) $. Hence, $\beta(\widetilde{E}_i^{0,+}(\mathbb{R})) = \beta(\mathbb{R}^*) = u-1$. The covering $\widetilde{E}_i^{0,+}(\mathbb{R}) \to E_i^0(\mathbb{R})$ is then as follows
    
    \begin{figure}[H]
        \centering
        \includegraphics[scale=0.35]{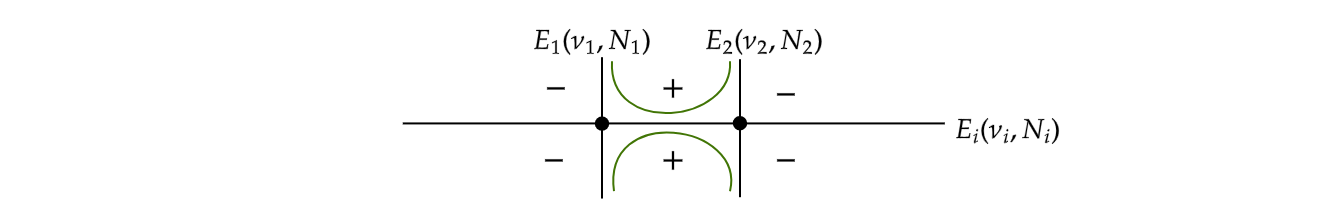}
    \end{figure}

    and $\widetilde{E}_{i,1}^{0,+}, \widetilde{E}_{i,2}^{0,+}$ both consist of a single point. It follows that

    $$N_i\mathcal{R}_{\beta,i}^+(u) =  u-1 + \frac{u-1}{u^{\alpha_1} -1} + \frac{u-1}{u^{\alpha_2}-1} = N_i\mathcal{R}_{\beta,i}(u) = 0.$$

    If $N_1$ is even, then $\lambda \in \mathbb{R}^*_+$ because $i \in J_{\mathbb{R}}^+$, and one has $\beta(\widetilde{E}_i^{0,+}(\mathbb{R})) = 2(u-1)$ thanks to the following Lemma \ref{lem 2}. The covering $\widetilde{E}_i^{0,+}(\mathbb{R}) \to E_i^0(\mathbb{R})$ is as follows
    
    \begin{figure}[H]
        \centering
        \includegraphics[scale=0.35]{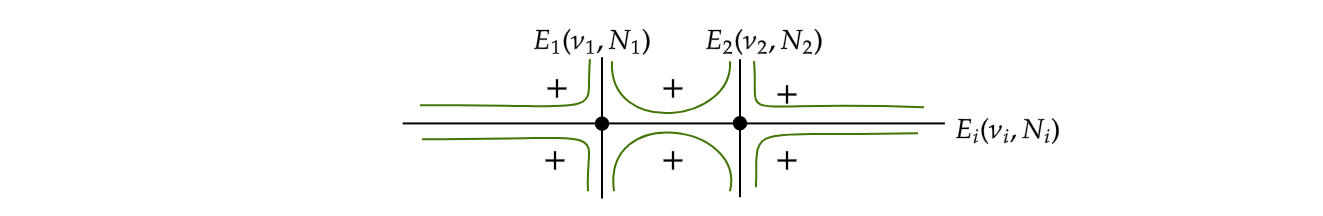}
    \end{figure}

    so that $\widetilde{E}_{i,1}^{0,+}, \widetilde{E}_{i,2}^{0,+}$ both consist of two points. Therefore,

    $$N_i\mathcal{R}_{\beta,i}^+(u) =  2(u-1) + 2\frac{u-1}{u^{\alpha_1} -1} + 2\frac{u-1}{u^{\alpha_2}-1} = 2N_i\mathcal{R}_{\beta,i}(u) = 0.$$
    
    Finally, regarding the contribution at the level of topological zeta functions, we know that $\mathcal{R}_{\beta,i}^+(u)$ can be seen as a $\mathcal{C}^\infty$ function of the variable $u \in \mathbb{R}^*_+\setminus\{1\}$, which can be continuously extended to 1 with
    $$\mathcal{R}_{top,i}^+= \lim_{u \to 1} \mathcal{R}_{\beta,i}^+(u)  = 0.$$
    
\end{proof}

\begin{lem}
    \label{lem 2}
    For all $m,p \in \mathbb{N}^*$, one has
    $$\beta(\{~(x,t) \in  \mathbb{R}^2 ~|~ x^{2m}t^{2p} = 1 ~\} ) = 2(u-1).$$
\end{lem}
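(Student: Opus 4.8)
The set $S=\{(x,t)\in\mathbb R^2\mid x^{2m}t^{2p}=1\}$ is described by the single equation $(x^m t^p)^2=1$, so it splits as the disjoint union of the two closed algebraic curves $S_+=\{x^m t^p=1\}$ and $S_-=\{x^m t^p=-1\}$, which are disjoint since $x^m t^p$ cannot be both $1$ and $-1$. By additivity of $\beta$ it suffices to show $\beta(S_+)=\beta(S_-)=u-1$; and since $(x,t)\mapsto(-x,t)$ (if $m$ is odd) or $(x,t)\mapsto(x,-t)$ (if $p$ is odd — at least one of $m,p$ may be even, so one must be slightly careful) — more robustly, since $S_-$ is obtained from $S_+$ by the regular automorphism $(x,t)\mapsto(-x,t)$ composed appropriately, the two pieces are regularly homeomorphic, so it is enough to compute $\beta(S_+)$.

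First I would identify $S_+$ with an explicit curve. On $S_+$ one has $x\neq 0$ and $t\neq 0$, and $t^p=x^{-m}$, i.e. $t=(x^{-m})^{1/p}\cdot\zeta$ with $\zeta$ a $p$-th root of unity; over $\mathbb R$ this means: if $p$ is odd, $t$ is the unique real $p$-th root of $x^{-m}$, so the projection $(x,t)\mapsto x$ is a regular homeomorphism $S_+\xrightarrow{\sim}\mathbb R^*$ (inverse $x\mapsto(x,\sqrt[p]{x^{-m}})$), whence $\beta(S_+)=\beta(\mathbb R^*)=u-1$ by invariance of $\beta$ under regular homeomorphisms (\cite{mccrory2} Proposition 4.3) and Example \ref{ex 5}. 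If $p$ is even, then $m$ must be such that $x^{-m}>0$ on the locus where solutions exist, i.e. $m$ is even too (otherwise $x^{-m}$ changes sign and one restricts to $x>0$); in either case the fibre of $(x,t)\mapsto x$ over the relevant $x$ has exactly two points $t=\pm\sqrt[p]{x^{-m}}$, and projecting instead to $t$ gives a regular homeomorphism onto $\mathbb R^*$ only after a further splitting. The clean way is: project $S_+$ to the $t$-coordinate. Then $x^m=t^{-p}$, and by the symmetric analysis $S_+\cong\mathbb R^*$ (regular homeomorphism) when $m$ is odd. So whenever $m$ or $p$ is odd we are done with $\beta(S_+)=u-1$, hence $\beta(S)=2(u-1)$.

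The remaining case is $m$ and $p$ both even, say $m=2m'$, $p=2p'$; then $S_+=\{(x^{m'}t^{p'})^2=1\}\cap\{x^{m}t^{p}=1\}$ — actually $S_+=\{x^{m'}t^{p'}=1\}\sqcup\{x^{m'}t^{p'}=-1\}$, and we iterate the reduction, peeling off a factor $2$ from both exponents at each step. Since the $2$-adic valuations of $m$ and $p$ are finite, after finitely many steps one of the exponents becomes odd and the base case applies. One must check that each splitting is into two \emph{disjoint} real algebraic pieces regularly homeomorphic to each other (via $(x,t)\mapsto(-x,t)$ or $(x,t)\mapsto(x,-t)$, one of which works since we only need the map to send one component to the other — and $(-1)^{\text{odd exponent}}=-1$ guarantees this once an odd exponent has appeared, while in the all-even stages the two components are still swapped by, e.g., $x\mapsto-x$ because the half-exponent $m'$ has strictly smaller $2$-adic valuation, hence is eventually odd). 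Keeping a clean bookkeeping of which coordinate-sign-flip realizes the regular homeomorphism between the $\pm$ pieces at each stage is the one genuinely fiddly point.

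\textbf{Main obstacle.} The only real subtlety is that $\beta$ is \emph{not} a topological invariant, so I cannot simply say ``$S$ is two disjoint copies of $\mathbb R^*$'' on topological grounds — I must exhibit honest \emph{regular} (Nash/algebraic) homeomorphisms and invoke \cite{mccrory2} Proposition 4.3 at each stage. The induction on $v_2(m)+v_2(p)$ handles this, but one should present it carefully (for instance by first reducing, via the substitution $u=x^m,\ v=t^p$ when convenient, to the case $m=p=1$, where $S=\{uv=1\}\sqcup\{uv=-1\}$, each piece the graph of $v=\pm 1/u$ over $\mathbb R^*$, giving $\beta(S)=2(u-1)$ directly) — the care being that $x\mapsto x^m$ is not injective when $m$ is even, which is exactly why the parity case-split is unavoidable.
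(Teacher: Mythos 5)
Your overall strategy (split $\{x^{2m}t^{2p}=1\}$ into the level sets $S_\pm=\{x^m t^p=\pm1\}$, use the projection onto the coordinate with odd exponent to get a regular homeomorphism with $\mathbb{R}^*$, and iterate when both exponents are even) is the same as the paper's, but the way you resolve what you yourself flag as the ``one genuinely fiddly point'' is wrong, and it is exactly there that the argument breaks. When $m$ and $p$ are both even, $x^m t^p=(x^{m/2}t^{p/2})^2\geq 0$, so \emph{no} coordinate sign flip can swap $S_+$ with $S_-$: the map $(x,t)\mapsto(-x,t)$ sends $S_+$ to itself (because $(-x)^m=x^m$), and $S_-$ is simply \emph{empty}. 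Your parenthetical claim that ``in the all-even stages the two components are still swapped by $x\mapsto-x$ because the half-exponent $m'$ \dots is eventually odd'' confuses the exponent governing the sign flip at the current stage (namely $m$, which is even) with exponents that only appear after the next splitting. Consequently the announced reduction ``it suffices to show $\beta(S_+)=\beta(S_-)=u-1$'' is false in this case: for $m=p=2$ one has $\beta(S_+)=\beta(\{x^2t^2=1\})=2(u-1)$ and $\beta(S_-)=0$.

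The repair is precisely the observation the paper makes: when both $m$ and $p$ are even, $S_-=\emptyset$ and $S_+=\{x^{2m'}t^{2p'}=1\}$ is the original set with halved exponents, so $\beta(S)=\beta(S_+)+0=2(u-1)$ by the induction hypothesis (inducting on $m$ as the paper does, or on $v_2(m)+v_2(p)$ as you suggest — the final answers agree only because $2(u-1)+0=(u-1)+(u-1)$, not because your intermediate claim holds). With that single correction your proof coincides with the paper's; your treatment of the case where $m$ or $p$ is odd — projection onto the suitable coordinate is a regular homeomorphism onto $\mathbb{R}^*$, the other level set being its image under the sign flip in the odd-exponent variable, and $\beta$ is invariant under regular homeomorphisms — is correct and is exactly what the paper does.
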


\begin{proof}
    We can proceed by induction on $m$. For $m=1$, one has
    $$\{~(x,t) \in  \mathbb{R}^2 ~|~ x^2t^{2p} = 1 ~\} = \{~(x,t) \in  \mathbb{R}^2 ~|~ xt^{p} = 1 ~\} \sqcup \{~(x,t) \in  \mathbb{R}^2 ~|~ xt^{p} =- 1 ~\} $$
    which is isomorphic to two disjoint copies of $\mathbb{R}^*$, so the result holds in this case. For $ m \geq 1$, one can similarly decompose 
    $$\{~(x,t) \in  \mathbb{R}^2 ~|~ x^{2m}t^{2p} = 1 ~\} = \{~(x,t) \in  \mathbb{R}^2 ~|~ x^{m}t^{p} = 1 ~\} \sqcup \{~(x,t) \in  \mathbb{R}^2 ~|~ x^mt^{p} = -1 ~\}.$$
    If $m$ (resp. $p$) is odd, the projection onto the first coordinate (resp. onto the second coordinate) gives a regular homeomorphism 
    $$\{~(x,t) \in  \mathbb{R}^2 ~|~ x^mt^{p} = \pm1 ~\} \simeq \mathbb{R}^*$$ 
    and the desired result follows.\\
    If both $m$ and $p$ are even, the set $\{~(x,t) \in  \mathbb{R}^2 ~|~ x^mt^{p} = -1 ~\}$ is empty, so that
    $$\{~(x,t) \in  \mathbb{R}^2 ~|~ x^{2m}t^{2p} = 1 ~\} =  \{~(x,t) \in  \mathbb{R}^2 ~|~ x^mt^{p} = 1 ~\}$$
    which allows us to conclude by induction.
\end{proof}

As in the proof of Corollary \ref{cor 4}, one can work in the ring K$_0(\mathbb{R}\text{Var})$ and verify that the contributions in Proposition \ref{prop 6} are also zero in the motivic setting.\\

The corollary below follows immediately from Proposition \ref{prop 6} and Theorem \ref{thm 4}.

\begin{cor}
\label{coro 5}
    Every pole of the positive topological zeta function is also a pole of the naive topological zeta function, that is, one has the inclusion
    $$\text{Poles}(Z_{top,0}^+(f;s)) \subset \text{Poles}(Z_{top,0}(f;s))  \cap \{~ -\frac{\nu_i}{N_i} \mid  i \in J_{\mathbb{R}}^{+}~\}$$
    and similarly, 
    $$\text{Poles}(Z_{top,0}^-(f;s)) \subset \text{Poles}(Z_{top,0}(f;s))  \cap \{~ -\frac{\nu_i}{N_i} \mid  i \in J_{\mathbb{R}}^{-}~\}.$$
    Furthermore, the same inclusions holds for the motivic zeta functions, as well as for the zeta functions defined at the level of the virtual Poincaré polynomial.
    
\end{cor}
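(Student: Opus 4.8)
The plan is to bootstrap from the naive case (Theorem \ref{thm 4}): at a candidate pole which is not a pole of $Z_{top,0}(f;s)$ one shows that \emph{each} sign contribution vanishes separately, so that — in contrast with Section \ref{section 2} — no argument ruling out cancellations is needed. First, by Remark \ref{rem 10} every pole of $Z_{top,0}^+(f;s)$ lies in $\{\,-\nu_i/N_i\mid i\in J_{\mathbb{R}}^+\,\}$, so the intersection with that set is automatic and it suffices to prove $\text{Poles}(Z_{top,0}^+(f;s))\subset\text{Poles}(Z_{top,0}(f;s))$. Arguing by contraposition, let $s_0\in\mathbb{Q}$ not be a pole of $Z_{top,0}(f;s)$; we may assume $s_0=-\nu_i/N_i$ for some $i\in J_{\mathbb{R}}^+\subset J_{\mathbb{R}}$, otherwise there is nothing to prove.

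By Theorem \ref{thm 4}, for every $j\in J_{\mathbb{R}}$ with $-\nu_j/N_j=s_0$ the curve $E_j$ is not a component of the strict transform: it has non-empty real locus (as $j\in J_{\mathbb{R}}$) and $\nu_j=1$ for such a component, so $s_0=-1/N_j$ would then be a pole of $Z_{top,0}(f;s)$. Hence $E_j$ is an exceptional curve, and again by Theorem \ref{thm 4} it satisfies $(E_j\cdot\sum_{l\neq j}E_l)<3$. First one rules out a pole of order $2$: if $s_0$ were one, Proposition \ref{prop 5} would give distinct $i',j'\in J_{\mathbb{R}}^+$ with $E_{i'}(\mathbb{R})\cap E_{j'}(\mathbb{R})\cap\overline{P(f)}\neq\emptyset$ and $s_0=-\nu_{i'}/N_{i'}=-\nu_{j'}/N_{j'}$, hence $E_{i'}\cap E_{j'}$ a real point, so by Proposition \ref{prop 1} $s_0$ would be a pole of order $2$ of $Z_{top,0}(f;s)$, a contradiction. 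So $s_0$ is at most a simple pole of $Z_{top,0}^+(f;s)$, and by Proposition \ref{prop 6} every contribution $\mathcal{R}_{top,j}^+$ with $j\in J_{\mathbb{R}}^+$, $-\nu_j/N_j=s_0$, is zero, whence $\text{Res}(Z_{top,0}^+;s_0)=\sum_j\mathcal{R}_{top,j}^+=0$ and $s_0$ is not a pole of $Z_{top,0}^+(f;s)$. This proves the first inclusion; the one for $Z_{top,0}^-$ follows by applying it to $-f$, since $Z_{top,0}^-(f;s)=Z_{top,0}^+(-f;s)$, $Z_{top,0}(-f;s)=Z_{top,0}(f;s)$ (the naive zeta function depends only on $\{f=0\}$), and $J_{\mathbb{R}}^-(f)=J_{\mathbb{R}}^+(-f)$. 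The finitely many degenerate $f$ set aside in the remark preceding Proposition \ref{prop 6} are dealt with directly using the explicit formulas given there.

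For the virtual Poincaré and motivic versions the argument is formally identical: the candidate poles are still bounded by $\{\,-\nu_i/N_i\mid i\in J_{\mathbb{R}}^{\pm}\,\}$ (Remark \ref{rem 10}); by Corollary \ref{cor 4} the pole sets of the three naive zeta functions agree, so ``$s_0$ is not a pole of $Z_{top,0}(f;s)$'' may be read as ``$s_0$ is not a pole of $Z_{\beta,0}(f;u^{-s})$'' (resp. of $Z_{mot,0}(f;\mathbb{L}^{-s})$) and Theorem \ref{thm 4} applies verbatim; the relevant single contributions vanish by the second assertion of Proposition \ref{prop 6} ($\mathcal{R}_{\beta,i}^+(u)=0$) and — just as in the proof of Corollary \ref{cor 4} — are already $0$ in $K_0(\mathbb{R}\text{Var})$; and a double pole is excluded by the evident $\beta$- (resp. $\mathcal{M}_{\mathbb{R}}$-) analogues of Propositions \ref{prop 5} and \ref{prop 1}, using that the double-pole contributions of distinct pairs carry the same sign and cannot cancel. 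The only slightly delicate point is this last one in the motivic setting, where ``pole'' must be understood over the ring $\mathcal{M}_{\mathbb{R}}$, which is not known to be a domain, as in Definition \ref{def 6}; everything else is a direct consequence of Proposition \ref{prop 6} and Theorem \ref{thm 4}.
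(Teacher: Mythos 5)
Your argument is correct and follows exactly the route the paper intends: the paper derives Corollary \ref{coro 5} as an immediate consequence of Proposition \ref{prop 6} (vanishing of each signed contribution when $(E_i\cdot\sum_{j\neq i}E_j)<3$, at the topological, $\beta$- and motivic levels) combined with Theorem \ref{thm 4}, with Remark \ref{rem 10} supplying the candidate-pole bound and Propositions \ref{prop 1} and \ref{prop 5} handling order-two poles. You have simply written out the details that the paper leaves implicit.
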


\begin{thm}
\label{thm 6}
     Assume that $E_i$ is an exceptional curve such that the multiplicity $N_i$ is odd. Then the contribution $\mathcal{R}_{top,i}^+$ is nonzero if and only if $(E_i \cdot \sum\limits_{j \neq i} E_j) \geq 3$. In this case, one has:
    \begin{enumerate}
        \item $\mathcal{R}_i^+ > 0$ if and only if $\alpha_j > 0$ for all $j \in \llbracket 1, k \rrbracket$, and we furthermore have $\mathcal{R}_i^+ \geq \frac{2}{N_i}$.
        \item $\mathcal{R}_i^+ < 0 $ if and only if there exists $j \in \llbracket 1, k \rrbracket$ such that $\alpha_j < 0$.   
    \end{enumerate}
    Moreover, the contribution $\mathcal{R}_{\beta,i}^+$ at the level of the virtual Poincaré polynomial is also nonzero if and only if $(E_i \cdot \sum\limits_{j \neq i} E_j) \geq 3$.  
\end{thm}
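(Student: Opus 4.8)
The plan is to reduce Theorem \ref{thm 6} to the naive case, Theorem \ref{thm 3}, by showing that when $N_i$ is odd the signed and naive contributions of $E_i$ literally coincide, that is, $\mathcal{R}_{top,i}^+ = \mathcal{R}_{top,i}$ and $\mathcal{R}_{\beta,i}^+(u) = \mathcal{R}_{\beta,i}(u)$.

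First I would exploit the oddness of $N_i$ at the level of signs. Near any real point of $E_i^0$ one has $f\circ\sigma = u\, y_i^{N_i}$ with $u$ a unit, and since $y_i^{N_i}$ assumes both signs when $N_i$ is odd, so does $f\circ\sigma$; the same holds near each real intersection point $p_j = E_i\cap E_j$, where $f\circ\sigma = u\, y_i^{N_i}y_j^{N_j}$. Hence $i$ and each $j\in\llbracket 1,k\rrbracket$ all belong to $J_{\mathbb{R}}^+$, so each of these $k$ pair-terms genuinely occurs in the truncated expression of Notation \ref{nota 3}, and moreover none of the $\alpha_j$ vanishes (we are not in the order-$2$ situation). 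Next, as already noted in Remark \ref{rem 12}, when $N_i$ is odd the covering $\widetilde{E}_i^{0,+}(\mathbb{R})\to E_i^0(\mathbb{R})$ is a regular homeomorphism (fibrewise, the unique real $N_i$-th root), so by invariance of $\beta$ under regular homeomorphisms \cite{mccrory2} one gets $\beta(\widetilde{E}_i^{0,+}(\mathbb{R})) = \beta(E_i^0(\mathbb{R})) = u+1-k$, whence $\mu(\widetilde{E}_i^{0,+}(\mathbb{R})) = 2-k$.

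I would then handle the intersection points. For $j\in\llbracket 1,k\rrbracket$ the stratum $E_{\{i,j\}}^0$ is the single real point $p_j$, and $\widetilde{E}_{\{i,j\}}^{0,+}(\mathbb{R}) \simeq \{\,t\in\mathbb{R}\mid t^{m}u(p_j)=1\,\}$ with $m=\gcd(N_i,N_j)$; since $N_i$ is odd so is $m$, and this set is a single point whatever the sign of $u(p_j)$. Hence $\beta(\widetilde{E}_{\{i,j\}}^{0,+}(\mathbb{R}))=\mu(\widetilde{E}_{\{i,j\}}^{0,+}(\mathbb{R}))=1$, exactly as for $E_i(\mathbb{R})\cap E_j(\mathbb{R})$. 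Substituting these values into the formulas of Notation \ref{nota 3} yields
\[
\mathcal{R}_{top,i}^+ = \frac{1}{N_i}\Big(2-k+\sum_{j=1}^{k}\frac{1}{\alpha_j}\Big) = \mathcal{R}_{top,i},
\qquad
\mathcal{R}_{\beta,i}^+(u) = \frac{1}{N_i}\Big(u+1-k+\sum_{j=1}^{k}\frac{u-1}{u^{\alpha_j}-1}\Big) = \mathcal{R}_{\beta,i}(u).
\]

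Finally I would invoke the naive results. The claim that $\mathcal{R}_{top,i}^+\neq 0$ precisely when $(E_i\cdot\sum_{j\neq i}E_j)\geq 3$, together with the sign dichotomy in terms of the signs of the $\alpha_j$, is exactly Theorem \ref{thm 3} read off for $\mathcal{R}_{top,i}$; and for the $\beta$-statement: when $(E_i\cdot\sum_{j\neq i}E_j)<3$ the remark following Theorem \ref{thm 3} gives $\mathcal{R}_{\beta,i}(u)\equiv 0$, whereas when $(E_i\cdot\sum_{j\neq i}E_j)\geq 3$ one has $\lim_{u\to 1}\mathcal{R}_{\beta,i}(u)=\mathcal{R}_{top,i}\neq 0$, so $\mathcal{R}_{\beta,i}^+(u)=\mathcal{R}_{\beta,i}(u)\not\equiv 0$. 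I do not anticipate a genuine obstacle in this odd-$N_i$ case; the one point that must be checked with some care is that all $k$ real-intersection terms survive in the signed setting (so that the computation really collapses onto the naive one), and this is precisely where the sign-change argument of the second paragraph — hence the hypothesis that $N_i$ is odd — is used. The genuinely harder, and separate, case is $N_i$ even, where the covering no longer trivializes.
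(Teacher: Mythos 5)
Your proposal is correct and follows essentially the same route as the paper: oddness of $N_i$ gives $\beta(\widetilde{E}_i^{0,+}(\mathbb{R}))=\beta(E_i^0(\mathbb{R}))$ via Remark \ref{rem 12}, each $\widetilde{E}_{i,j}^{0,+}(\mathbb{R})$ is a single point since $\gcd(N_i,N_j)$ is odd, hence $\mathcal{R}_{\beta,i}^+=\mathcal{R}_{\beta,i}$ and everything reduces to Theorem \ref{thm 3} (and the remark following it for the $\beta$-statement when $(E_i\cdot\sum_{j\neq i}E_j)<3$). Your extra verifications that $i$ and the $j$'s lie in $J_{\mathbb{R}}^+$ are sound but not part of the paper's (more terse) argument.
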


\begin{proof}
Since $N_i$ is odd, we have seen in Remark \ref{rem 12} that $\beta(\widetilde{E}_i^{0,+}(\mathbb{R})) = \beta(E_i^0(\mathbb{R}))$. Moreover, each terms $\widetilde{E}_{i,j}^{0,+}(\mathbb{R})$ consists of a single point because $\gcd(N_i, N_j)$ is odd. Hence, one has
 $\mathcal{R}_{\beta,i}^+=\mathcal{R}_{\beta,i}$ and the theorem now follows from Theorem \ref{thm 3}.
\end{proof}

Let us now assume that $N_i$ is even and denote $C = \widetilde{E}_i^{0,+}(\mathbb{R})$. By construction, $C$ is a smooth (this can be verified locally) real algebraic curve, however, $C$ is not compact in general. In fact, one can check without much difficulty that $C$ is compact if and only if $E_i$ does not intersect any component at a real point.\\
Let us consider $C \hookrightarrow \widetilde{C}$, a smooth compactification of $C$. By additivity of the virtual Poincaré polynomial, one has 
$$\beta(C) = \beta(\widetilde{C}) - \beta(\widetilde{C} \setminus C ).$$
Since $\widetilde{C}$ is a smooth compact curve, $\widetilde{C}$ is topologically a union of circles. Let $c$ be the number of these circles, that is, the number of connected components of $\widetilde{C}$. Then one has 
$$\beta(C) = c (u+1) - \beta(\widetilde{C} \setminus C ) = c(u+1) + \chi_c(C).$$

A smooth compactification of $C$ can be described quite explicitly. Let us first describe a compactification $C \hookrightarrow \overline{C}$. Locally, let $U\simeq \mathbb{R}^2$ be a Zariski open set on which $f(\sigma(x,y)) = u(x,y)y^{N_i}$, and denote $P(x) = u(x,0) \in \mathbb{R}[x]$, so that 
$$C \cap U \simeq \{~(x,t) \in  \mathbb{R}^2 ~|~ P(x)t^{N_i} = 1 ~\}.$$
Locally as above, the sign of $f \circ \sigma$ is independent of $t$ and is determined by the sign of $P$.
The closure of $C \cap U$ in $(E_i \cap U) \times \mathbb{P}^1(\mathbb{R}) \simeq \mathbb{R} \times \mathbb{P}^1(\mathbb{R})$ is the algebraic curve 
$$\{~(x,[t;y]) \in  \mathbb{R} \times \mathbb{P}^1(\mathbb{R}) ~|~ P(x)t^{N_i} = y^{N_i} ~\}$$
 where the points at infinity correspond to the real roots of $P$. One can then glue the curves above along the open sets $E_i \cap U$ to obtain $\overline{C}$, and it follows that, globally, $\overline{C} \setminus C$ corresponds to the number of real intersection points of $E_i$ with other components.
Note that the above curve is in fact contained in a single affine chart corresponding to $t=1$, so that 
$$\overline{C} \cap U \simeq \{~(x,y) \in  \mathbb{R}^2 ~|~ P(x) = y^{N_i} ~\}.$$
When $P$ has simple roots, the curve $\overline{C}$ is smooth and of hyperelliptic type (apart from the fact that $N_i$ is generally strictly greater than 2). For example, the real locus of $y^8-x(x-1)(x-2)(x-\frac{1}{2})(x+2)(x+4)$ is as follows.
     
    \begin{figure}[H]
        \centering
        \includegraphics[scale=0.35]{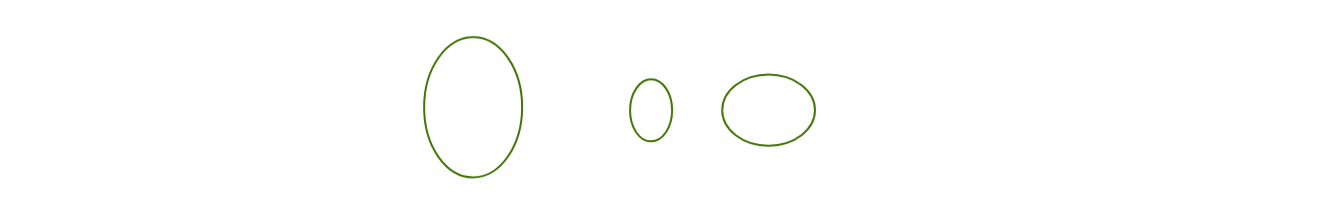}
    \end{figure}

Moreover, the number of connected components of $\overline{C}$ in this case is equal to $\frac{k}{2}$, where $k$ is the number of real intersection points of $E_i$ with other components (one sees that $k$ is necessarily even by the first equality in the proof of Proposition \ref{prop 3}).\\
Let us now consider the general case, where we no longer assume that $P$ has simple roots. The singular points of $\overline{C}$ then correspond (locally as above) to the roots of $P$ with multiplicity strictly greater than $1$. Consider, for example, the curve $\overline{C}$ defined locally by the equation $y^{8}+x(x-1)(x-3)^{6}(x+2)^{3}(x+4)^2(x-5)$ and denote by $\nu : \widetilde{C} \to \overline{C}$ its normalization. The real locus of these curves is shown below, where we have depicted the roots of $P$ as points of $\overline{C}$, as well as their preimages under $\nu.$

    \begin{figure}[H]
        \centering
        \includegraphics[scale=0.35]{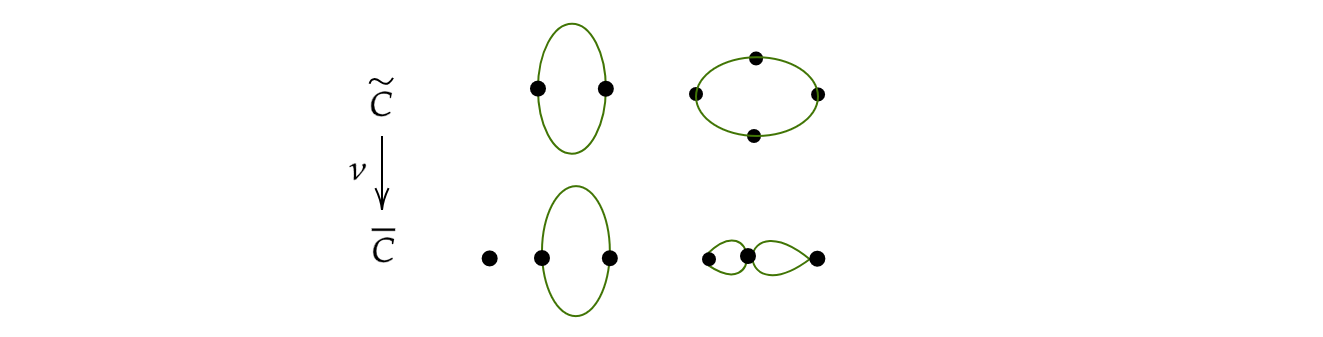}
    \end{figure}

\begin{prop}
    \label{prop 7}
    With the above notations, one has
    $$-\chi_c(C) = \beta(\widetilde{C} \setminus C )  = \sum\limits_{j=1}^k \beta(\widetilde{E}_{i,j}^{0,+}(\mathbb{R})).$$
\end{prop}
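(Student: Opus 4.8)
\emph{Proof plan.} The proposition contains two equalities, and the plan is to prove each in turn. The equality $-\chi_c(C) = \beta(\widetilde C\setminus C)$ is essentially formal. Since $\widetilde C$ is a smooth compact real algebraic curve, its real locus is a disjoint union of circles, so $\chi_c(\widetilde C) = 0$. Because $C$ is smooth, it lies in the smooth locus of $\overline C$: indeed, the singular points of $\overline C$ correspond (locally as above) to the roots of $P$ of multiplicity $\geq 2$, all of which have $y = 0$ and hence lie in $\overline C\setminus C$. Therefore the normalization $\nu$ restricts to an isomorphism over $C$, so $C$ embeds in $\widetilde C$ and $\widetilde C\setminus C = \nu^{-1}(\overline C\setminus C)$ is a finite set of real points, on which $\beta$ and $\chi_c$ both return the number of points. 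Additivity of $\chi_c$ then gives $0 = \chi_c(\widetilde C) = \chi_c(C) + \chi_c(\widetilde C\setminus C) = \chi_c(C) + \beta(\widetilde C\setminus C)$, which is the claim.

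For the second equality, recall that $\overline C$ lies over $E_i(\mathbb R)$, so (as already noted) $\overline C\setminus C$ consists exactly of the $k$ real intersection points $p_1,\dots,p_k$ of $E_i$ with the components $E_1,\dots,E_k$; the complex intersection points do not meet $E_i(\mathbb R)$ and contribute nothing. Hence $\widetilde C\setminus C = \bigsqcup_{j=1}^k\bigl(\nu^{-1}(p_j)\cap\widetilde C(\mathbb R)\bigr)$, and by additivity of $\beta$ it suffices to prove, for each $j$, that the number of real branches of $\overline C$ at $p_j$ equals $\#\,\widetilde E_{i,j}^{0,+}(\mathbb R)$. I would do this by a local computation: choose coordinates $(x,y)$ centered at $p_j$ with $E_i = \{y=0\}$ and $E_j=\{x=0\}$, so that $f\circ\sigma = v\,x^{N_j}y^{N_i}$ with $c:=v(p_j)\neq 0$; then near $p_j$ one has $P(x)=x^{N_j}w(x)$ with $w(0)=c$, and $\overline C$ is locally $\{y^{N_i}=x^{N_j}w(x)\}$. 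Absorbing the unit $w$ by the local analytic change of coordinate $y\mapsto y\cdot (w(x)/c)^{1/N_i}$ (legitimate since $w(0)/c=1$) reduces $\overline C$ to the monomial model $\{y^{N_i}=c\,x^{N_j}\}$. Writing $m=\gcd(N_i,N_j)$ and factoring
$$y^{N_i}-c\,x^{N_j}=\prod_{\zeta^m=c}\bigl(y^{N_i/m}-\zeta\,x^{N_j/m}\bigr)$$
into $\mathbb R$-irreducible pieces, each factor with real $\zeta$ is $\mathbb R$-irreducible with exactly one real branch at the origin, while a complex-conjugate pair of factors has no real points near the origin; hence the number of real branches of $\overline C$ at $p_j$ equals $\#\{\zeta\in\mathbb R:\zeta^m=c\}$, which is $1$ if $m$ is odd, $2$ if $m$ is even and $c>0$, and $0$ if $m$ is even and $c<0$. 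This is precisely $\#\{t\in\mathbb R:t^mc=1\}=\#\,\widetilde E_{i,j}^{0,+}(\mathbb R)$, and summing over $j$ gives $\beta(\widetilde C\setminus C)=\sum_{j=1}^k\beta(\widetilde E_{i,j}^{0,+}(\mathbb R))$.

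The main obstacle is this local step: one must argue carefully over $\mathbb R$ (rather than over $\mathbb C$, where the branch count is simply $\gcd(N_i,N_j)$) that the number of real points of $\widetilde C$ above $p_j$ depends only on $m=\gcd(N_i,N_j)$ and on the sign of the unit $c$, and that the higher-order terms of $w$ are irrelevant. The reduction to the monomial model and the factorization above handle this; the point needing attention is that, for real $\zeta\in\mathbb R^*$ and $\gcd(N_i/m,N_j/m)=1$, the curve $\{y^{N_i/m}=\zeta\,x^{N_j/m}\}$ is $\mathbb R$-irreducible and has a single real place at the origin (even when that local branch is singular, e.g.\ of cuspidal type, which is exactly why one passes to the normalization), whereas a nonreal $\zeta$ yields a conjugate pair with no real points near the origin. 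Everything else --- the vanishing of $\chi_c$ on a smooth compact curve, the identification of $\overline C\setminus C$ with the real intersection points, and the additivity of $\beta$ and $\chi_c$ --- is either already recorded in the text or routine.
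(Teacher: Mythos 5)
Your proof is correct and follows essentially the same route as the paper: identify $\overline{C}\setminus C$ with the real intersection points of $E_i$ with the other components, use that the normalization is an isomorphism over $C$, and count the real preimages of each such point by a local analysis of the monomial model $y^{N_i}=\pm x^{N_j}$, matching the result with $\#\,\widetilde{E}_{i,j}^{0,+}(\mathbb{R})$ according to the parity of $\gcd(N_i,N_j)$ and the sign of the unit. Your explicit factorization over $\mathbb{C}$ simply makes precise the branch counts that the paper cites as well known, and your argument for the first equality spells out what the paper established in the discussion preceding the proposition.
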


\begin{proof}
    We will count the number of points in $\widetilde{C}\setminus C$. Recall that for $j\in \llbracket 1, k \rrbracket$, $\beta(\widetilde{E}_{i,j}^{0,+}(\mathbb{R}))$ is equal to $1$ when $N_j$ is odd and equal to $0$ or $2$ when $N_j$ is even, depending on the sign of $f \circ \sigma$ in a neighborhood of $E_i \cap E_j$. One can write 
    $$\overline{C} = C \sqcup \overline{C}\setminus C$$
    where we have seen above that the points at infinity, i.e., the set $\overline{C} \setminus C$, corresponds to the real intersection points of $E_i$ with other components. The normalization $\nu : \widetilde{C} \to \overline{C}$ is an isomorphism outside the singular locus of $\overline{C}$, in particular, it is an isomorphism on $C$. One can then write
    $$\widetilde{C} = \nu^{-1}(\overline{C}) = \nu^{-1}(C) \sqcup \nu^{-1}(\overline{C} \setminus C)$$
    where $\nu^{-1}(C) \simeq C$ and $\widetilde{C} \setminus C \simeq \nu^{-1}(\overline{C} \setminus C)$. It remains to count the number of preimages of each point of $\overline{C}\setminus C$ under $\nu$, which can be done locally. Let $p \in \overline{C} \setminus C$, corresponding to a real intersection point of $E_i$ with another component $E_j$. Using the notation from the discussion above, let $U$ be a Zariski open set containing $p=E_i \cap E_j$, on which 
    $$\overline{C} \cap U \simeq \{~(x,y) \in  \mathbb{R}^2 ~|~ P(x) = y^{N_i} ~\}$$
    and where $p$ corresponds to a real root of $P$. By performing a translation, we may assume that $p$ is the origin in $\mathbb{R}^2$. In a neighborhood of the origin, the curve $\overline{C}$ is then analytically equivalent to a curve with equation $y^{N_i} = \pm x^{N_j}$, whose normalization is well-known in the different cases.\\
    If $N_j$ is odd, then $\nu^{-1}(p)$ consists of a single point, as does $\widetilde{E}_{i,j}^{0,+}(\mathbb{R})$.\\
    Now, if $N_j$ is even and $\overline{C}$ is analytically equivalent to $y^{N_i} = x^{N_j}$ in a neighborhood of the origin, then $\nu^{-1}(p)$ consists of two points, and $f \circ \sigma$ is positive in a neighborhood of $p$, so that $\widetilde{E}_{i,j}^{0,+}(\mathbb{R})$ also consists of two points.\\
    Finally, assume that $N_j$ is even and that $\overline{C}$ is analytically equivalent to $y^{N_i} = -x^{N_j}$ in a neighborhood of the origin. In this case, $f \circ \sigma$ is negative in a neighborhood of $p$, so that $\widetilde{E}_{i,j}^{0,+}(\mathbb{R})$ is empty. Moreover, the real locus of $\overline{C}$ in a neighborhood of the origin consists of a single isolated point, hence $\nu^{-1}(p) = \emptyset$.
\end{proof}

\begin{cor}
    Assume that $(E_i \cdot \sum\limits_{ j \neq i } E_j) \geq 3$ and that $\alpha_j > 0$ for all $j \in \llbracket 1, k \rrbracket$ such that $E_i(\mathbb{R}) \cap E_j(\mathbb{R}) \cap \overline{P(f)} \neq \emptyset$ (that is $\mu(\widetilde{E}_{i,j}^{0,+}(\mathbb{R})) \neq 0$). Then $\mathcal{R}_{top,i}^+ \geq \frac{2c}{N_i} > 0 $.
\end{cor}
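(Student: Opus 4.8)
The plan is to rewrite $N_i\mathcal{R}_{top,i}^{+}$ as $2c$ plus a sum of manifestly nonnegative terms, and then to observe that $c\geq 1$. We are in the case $N_i$ even; set $C=\widetilde{E}_i^{0,+}(\mathbb{R})$ and write $\mu_j=\mu(\widetilde{E}_{i,j}^{0,+}(\mathbb{R}))$ for $j\in\llbracket 1,k\rrbracket$, where $E_1,\dots,E_k$ are the components meeting $E_i$ at its $k$ real intersection points. By Notation~\ref{nota 3},
$$N_i\mathcal{R}_{top,i}^{+}\;=\;\mu(C)\;+\;\sum_{j=1}^{k}\frac{\mu_j}{\alpha_j},$$
where some of the weights $\mu_j$ may vanish. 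The first step is to compute $\mu(C)$: from the description of a smooth compactification $C\hookrightarrow\widetilde{C}$ recalled just before Proposition~\ref{prop 7} one has $\beta(C)=c(u+1)+\chi_c(C)$, so specializing at $u=1$ gives $\mu(C)=2c+\chi_c(C)$. The second step is to evaluate $\chi_c(C)$: since each $\widetilde{E}_{i,j}^{0,+}(\mathbb{R})$ is a finite set of points, its virtual Poincaré polynomial is the constant $\mu_j$, and Proposition~\ref{prop 7} reads $-\chi_c(C)=\sum_{j=1}^{k}\mu_j$. Substituting, the two sums over $j$ merge into
$$N_i\mathcal{R}_{top,i}^{+}\;=\;2c\;+\;\sum_{j=1}^{k}\mu_j\left(\frac{1}{\alpha_j}-1\right).$$

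Next I would estimate this sum term by term. If $\mu_j=0$ the $j$-th term vanishes. If $\mu_j\neq 0$, then $E_i(\mathbb{R})\cap E_j(\mathbb{R})\cap\overline{P(f)}\neq\emptyset$, so the hypothesis gives $\alpha_j>0$, while Proposition~\ref{prop 2} gives $\alpha_j<1$; hence $\frac{1}{\alpha_j}-1>0$, and since $\mu_j\geq 0$ the $j$-th term is nonnegative. Therefore $N_i\mathcal{R}_{top,i}^{+}\geq 2c$. It remains to see that $c\geq 1$. Since $i\in J_{\mathbb{R}}^{+}$, the curve $C$ is non-empty: on a local chart with $f\circ\sigma=u(x,y)y^{N_i}$ the set $\{~(x,t)\in\mathbb{R}^{2}\mid u(x,0)t^{N_i}=1~\}$ is non-empty because $f\circ\sigma$ takes positive values arbitrarily close to $E_i^{0}(\mathbb{R})$; hence its smooth compactification $\widetilde{C}$ has at least one connected component, i.e.\ $c\geq 1$. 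This yields $\mathcal{R}_{top,i}^{+}\geq\frac{2c}{N_i}>0$, as claimed; one also sees that the standing hypothesis $(E_i\cdot\sum_{j\neq i}E_j)\geq 3$ is precisely what makes this strict positivity consistent with Proposition~\ref{prop 6}.

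The real content here is Proposition~\ref{prop 7} — equivalently, the explicit analysis of the real locus of the hyperelliptic-type curve $\overline{C}$ and of its normalization $\nu:\widetilde{C}\to\overline{C}$ — which is already established; granting it, the corollary reduces to the bookkeeping above. I expect the only point requiring care to be tracking which indices $j$ carry a nonzero weight $\mu_j$, and correspondingly which $\alpha_j$ are constrained by the hypothesis — a subtlety that was absent in the naive case treated in Theorem~\ref{thm 3}.
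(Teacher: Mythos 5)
Your argument is correct and follows exactly the paper's route: Proposition \ref{prop 7} combined with $\beta(C)=c(u+1)+\chi_c(C)$ yields $N_i\mathcal{R}_{top,i}^{+}=2c+\sum_{j}\mu_j(\tfrac{1}{\alpha_j}-1)$, and each term of the sum is nonnegative under the hypothesis (using $\alpha_j<1$ from Proposition \ref{prop 2}). The paper states this in one line; you have merely made explicit the specialization at $u=1$ and the observation that $c\geq 1$ because $i\in J_{\mathbb{R}}^{+}$ forces $C\neq\emptyset$.
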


\begin{proof}
    By Proposition \ref{prop 7}, one can write
    $$\mathcal{R}_{top,i}^+ = \frac{1}{N_i} \left( 2c + \sum\limits_{j=1}^k \mu(\widetilde{E}_{i,j}^{0,+}(\mathbb{R}))( \frac{1}{\alpha_j} -1) \right) \geq \frac{2c}{N_i}.$$ 
    
\end{proof}

\begin{rem}
    In general, the existence of $j \in \llbracket 1, k \rrbracket$ such that $\alpha_j < 0$ and $\mu(\widetilde{E}_{i,j}^{0,+}(\mathbb{R})) \neq 0$ does not necessarily imply that $\mathcal{R}_{top,i}^+ < 0$, unlike in the naive case. There may even be cancellations, so that the contribution at the topological level can be zero, as illustrated by the following example.
\end{rem}

\begin{ex}
    \label{ex 6}
    Consider the homogeneous polynomial $f = xy(x-y)^3(x-2y)^7$. An embedded resolution of $f$ can be obtained by blowing-up the origin, and the graph of the resolution, together with the coverings $\widetilde{E}_i^{0,+}(\mathbb{R})$, is as follows.

    \begin{figure}[H]
        \centering
        \includegraphics[scale=0.35]{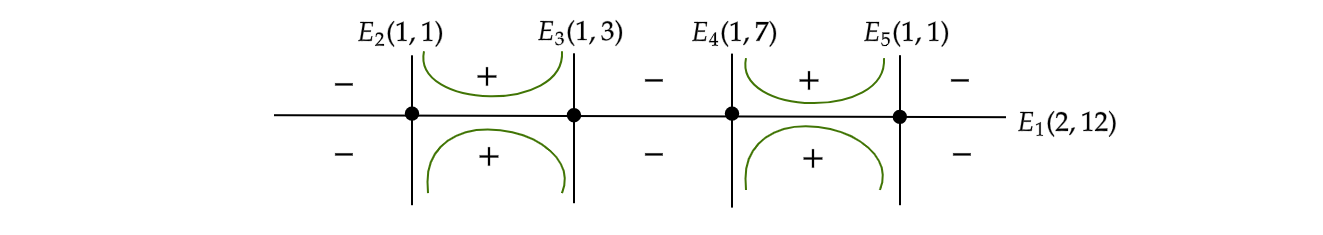}
    \end{figure}
    
    A smooth compactification of $\widetilde{E}_1^{0,+}(\mathbb{R})$ has two connected components, and one finds that 
    $$\mathcal{R}_{top,1}^+ = \sum\limits_{j = 2}^5\frac{1}{\alpha_j} = \frac{6} {5} +2 - 6 + \frac{6}{5} < 0.$$
    Now, if one consider $f =xy(x-y)^3(x-2y)^9$, the resolution graph is the same but the multiplicities $N_4$ and $N_1$ are now equal to $9$ and $14$ respectively. It follows that 
    $$\mathcal{R}_{top,1}^+ = \sum\limits_{j = 2}^5\frac{1}{\alpha_j} = \frac{7}{6} + \frac{7}{4} -\frac{7}{2} + \frac{7}{6} > 0.$$
    Finally, if one consider $f = xy(x-y)(x-2y)^5$, the graph of the solution is the same, but the multiplicities $N_1, N_2, N_3, N_4, N_5$ are $8, 1, 1, 5, 1$, respectively, so that 
    $$\mathcal{R}_{top,1}^+ = \sum\limits_{j = 2}^5\frac{1}{\alpha_j}= \frac{4}{3} + \frac{4}{3} + \frac{4}{3} - 4 = 0.$$
    In particular, the poles of $Z_{top,0}^+(f;s)$ are $-1,-\frac{1}{5}$, while the poles of $Z_{top,0}(f;s)$ are $-1,-\frac{1}{4},-\frac{1}{5}$, and one has the strict inclusion 
    $$\text{Poles}(Z_{top,0}^+(f;s)) \subsetneq \text{Poles}(Z_{top,0}(f;s))  \cap \{~ -\frac{\nu_i}{N_i} \mid  i \in J_{\mathbb{R}}^{+}~\}.$$
    However, for the contribution at the level of the virtual Poincaré polynomial, one finds 
    $$\mathcal{R}_{\beta,1}^+(u) = 2(u-1)+3\frac{u-1}{u^{\frac{3}{4}}-1} + \frac{u-1}{u^{-\frac{1}{4}}-1} = \frac{(2u^{\frac{1}{2}} -u^{\frac{3}{4}}  + u^{-\frac{1}{4}}-2)(u-1) }{(u^{\frac{3}{4}}-1)(u^{-\frac{1}{4}}-1)  } $$
    which is not identically zero, even though $\underset{u \to 1}{\lim}\mathcal{R}_{\beta,1}^+(u)  = \mathcal{R}_{top,1}^+ = 0$. In particular, $-\frac{1}{4}$ is a pole of $Z_{\beta,0}(f;u^{-s})$, and one also has a strict inclusion 
    $$\text{Poles}(Z_{top,0}^+(f;s)) \subsetneq \text{Poles}(Z_{\beta,0}^+(f;u^{-s})) $$
    unlike what happens in the naive case.

\end{ex}

\begin{prop}
\label{prop 11}
    At the level of the virtual Poincaré polynomial, the contribution $\mathcal{R}_{\beta,i}^+$ is nonzero if and only if $(E_i \cdot \sum\limits_{j \neq i} E_j) \geq 3.$
\end{prop}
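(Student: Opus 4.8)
The plan is to reduce to the case where $E_i$ is an exceptional curve with $N_i$ even (the strict transform case being vacuous, and the odd case being covered by Theorem \ref{thm 6}), to write $\mathcal R_{\beta,i}^+$ in closed form by means of Proposition \ref{prop 7}, and then to detect its non-vanishing by specialising at $u\to 0^+$ rather than at $u=1$; the latter is the naïve approach but it can fail, since the topological contribution $\mathcal R_{top,i}^+=\lim_{u\to 1}\mathcal R_{\beta,i}^+(u)$ sometimes vanishes (Example \ref{ex 6}). Concretely: the "only if" direction is exactly the contrapositive of Proposition \ref{prop 6}. For the converse, assume $(E_i\cdot\sum_{j\neq i}E_j)\geq 3$; a component of the strict transform meets the rest of $(f\circ\sigma)^{-1}(0)$ at a single point, so there $(E_i\cdot\sum_{j\neq i}E_j)<3$ and the case is vacuous, while if $E_i$ is exceptional with $N_i$ odd we are done by Theorem \ref{thm 6}. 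So I may assume $E_i$ is exceptional with $N_i$ even; write $C=\widetilde E_i^{0,+}(\mathbb R)$, let $\widetilde C$ be a smooth compactification with $c$ connected components, and set $n_j:=\beta(\widetilde E_{i,j}^{0,+}(\mathbb R))\in\{0,1,2\}$ and $\alpha_j=\nu_j-\tfrac{\nu_i}{N_i}N_j$. By Proposition \ref{prop 7} one has $\beta(C)=c(u+1)-\sum_{j=1}^{k}n_j$, hence
$$N_i\,\mathcal R_{\beta,i}^+(u)=c(u+1)+\sum_{j=1}^{k}n_j\,\frac{u-u^{\alpha_j}}{u^{\alpha_j}-1}=:g(u).$$

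First I would record that $c\geq 1$, because $C\neq\emptyset$: since $i\in J_{\mathbb R}^+$ and $N_i$ is even, there is a point of $E_i^0(\mathbb R)$ near which $f\circ\sigma>0$ (if the point of $E_i(\mathbb R)\cap\overline{P(f)}$ realising $i\in J_{\mathbb R}^+$ lies on a corner $E_i\cap E_j$, one inspects the two arcs of $E_i^0(\mathbb R)$ adjacent to it), and over such a point the covering $C\to E_i^0(\mathbb R)$ has non-empty fibre. Then I would let $u\to 0^+$ along the real axis: since $\alpha_j<1$ for every $j$ by Proposition \ref{prop 2}, the summand $\tfrac{u-u^{\alpha_j}}{u^{\alpha_j}-1}$ tends to $0$ if $\alpha_j>0$ and to $-1$ if $\alpha_j<0$, and by Corollary \ref{cor 1}(3) at most one $\alpha_j$ with $j\le k$ is negative, say $\alpha_1$, with count $n_1$ (take $n_1=0$ if there is none). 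Hence $\lim_{u\to 0^+}g(u)=c-n_1$, which is non-zero — so $g\not\equiv 0$ — as soon as $c\neq n_1$.

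It remains to treat the case $c=n_1$, which forces $n_1=c\geq 1$ and in particular the existence of the negative exponent $\alpha_1$. Here I would look at the Puiseux expansion of $g$ at $u=0^+$: the term $c(u+1)$ contributes only at exponents $0$ and $1$; for $\alpha_j>0$ the term $n_j\tfrac{u-u^{\alpha_j}}{u^{\alpha_j}-1}$ contributes only at exponents $\ge\alpha_j$; and $n_1\tfrac{u-u^{\alpha_1}}{u^{\alpha_1}-1}=-n_1-n_1u^{-\alpha_1}+(\text{terms of exponent}>-\alpha_1)$. Since $-\alpha_1<1$ by Proposition \ref{prop 2} and $-\alpha_1<\alpha_j$ for every $j\in\llbracket 2,k\rrbracket$ by Corollary \ref{cor 1}(4), no other term contributes at exponent $-\alpha_1$, so the coefficient of $u^{-\alpha_1}$ in $g$ is $-n_1\neq 0$ and again $g\not\equiv 0$. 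In every case $\mathcal R_{\beta,i}^+$ is non-zero, which proves the proposition. The main obstacle is exactly this last case $c=n_1$: it is precisely where the topological contribution may be zero, so non-vanishing has to be read off at $u=0^+$, and that the expansion there still exhibits a non-zero coefficient relies on the sharp inequalities of Corollary \ref{cor 1}, which hold because $\sigma$ is the canonical resolution.
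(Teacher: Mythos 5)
Your proof is correct and follows essentially the same route as the paper: rewrite $N_i\mathcal{R}_{\beta,i}^+(u)=c(u+1)+\sum_j \beta(\widetilde{E}_{i,j}^{0,+}(\mathbb{R}))\frac{u-u^{\alpha_j}}{u^{\alpha_j}-1}$ via Proposition \ref{prop 7} and detect non-vanishing from the Puiseux expansion at $u=0$, using Corollary \ref{cor 1} to isolate the coefficient of $u^{-\alpha_1}$. The only (harmless) difference is that you read off the all-$\alpha_j>0$ case at $u\to 0^+$ as well, where the paper instead invokes the preceding corollary at $u\to 1$; your explicit justification that $c\geq 1$ is a nice touch the paper leaves implicit.
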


\begin{proof}
    First, recall that the contribution at the level of the virtual Poincaré polynomial is given by 
    $$N_i\mathcal{R}_{\beta,i}^+(u) = \beta(\widetilde{E}_i^{0,+}(\mathbb{R})) + \sum\limits_{j=1}^k  \beta(\widetilde{E}_{i,j}^{0,+}(\mathbb{R}))\frac{u-1}{u^{\alpha_j}-1}$$
    and that, by Proposition \ref{prop 7}, one has $\beta(\widetilde{E}_i^{0,+}(\mathbb{R})) = c (u+1) -\sum\limits_{ j =1}^k \beta(\widetilde{E}_{i,j}^{0,+}(\mathbb{R}))$, where $c$ is the number of connected components of a smooth compactification of $\widetilde{E}_i^{0,+}(\mathbb{R})$. Therefore, one has 
    $$N_i\mathcal{R}_{\beta,i}^+(u) = c(u+1) + \sum\limits_{j=1}^k  \beta(\widetilde{E}_{i,j}^{0,+}(\mathbb{R}))\frac{u-u^{\alpha_j}}{u^{\alpha_j}-1}.$$
    If $ \alpha_j > 0$ for all $j \in \llbracket 1, k \rrbracket$ such that $\beta(\widetilde{E}_{i,j}^{0,+}(\mathbb{R})) \neq 0$, we have seen in the previous corollary that $\underset{u  \to 1 }{\lim}\mathcal{R}_{\beta,i}^+(u) = \mathcal{R}_{top,i}^+ \neq 0$ and in particular, it follows that $\mathcal{R}_{\beta,i}^+$ is not identically zero. Now, assume that there exists some $\alpha_1 < 0 $ such that $\beta(\widetilde{E}_{i,1}^{0,+}(\mathbb{R})) \neq 0$. To prove that $\mathcal{R}_{\beta,i}^+$ is not identically zero, it suffices to show that the leading terms of the asymptotic expansion of $\mathcal{R}_{\beta,i}^+ $ at $0$ are non-zero. One finds that
    $$\frac{u - u^{\alpha_1}}{u^{\alpha_1}-1}= \frac{u^{1-\alpha_1}-1}{1-u^{-\alpha_1}} = -1 - u^{-\alpha_1} + \underset{u \to 0}{o}(u^{-\alpha_1}).$$
    On the other hand, since $(E_i \cdot \sum\limits_{j \neq i } E_j) \geq 3$, one knows that $-\alpha_1 < \underset{2 \leq j \leq k }{\min} \alpha_j$ by Corollary \ref{cor 1}, and for $j \in \llbracket 1, k \rrbracket$ one has 
    $$\frac{u-u^{\alpha_j}}{u^{\alpha_j}-1} = u^{\alpha_j} + \underset{u \to 0}{o}(u^{\alpha_j}) = \underset{u \to 0}{o}(u^{-\alpha_1}).$$
    Therefore
    $$N_i\mathcal{R}_{\beta,i}^+(u) = c - \beta(\widetilde{E}_{i,1}^{0,+}(\mathbb{R})) -\beta(\widetilde{E}_{i,1}^{0,+}(\mathbb{R}))u^{-\alpha_1} + \underset{u \to 0}{o}(u^{-\alpha_1})$$
    which concludes the proof.
\end{proof}

The previous theorem allows us to describe the poles of $Z_{\beta,0}^+(f;u^{-s})$ in cases where there is at most one nonzero contribution for a given candidate pole (this happens, for example, when $f=g^r$ with $g$ analytically irreducible).

\begin{cor}
\label{cor 6}
    Let $s_0 \in \mathbb{Q}$. Assume there exists exactly one $i \in J_{\mathbb{R}}^{\pm}$ such that either $E_i$ is an exceptional curve satisfying $(E_i\cdot \sum\limits_{j\neq i } E_j) \geq 3$ and $s_0=-\frac{\nu_i}{N_i}$ or such that $E_i$ is an irreducible component of the strict transform and $s_0=-\frac{1}{N_i}$. Then $s_0$ is a pole of $Z_{\beta,0}^{\pm}(f;u^{-s})$.\\
    Equivalently, in cases where there is at most one nonzero contribution for every candidate pole as above, Proposition \ref{prop 11}, together with the results of section \ref{section 2}, yields
    $$\text{Poles}(Z_{\beta,0}^{\pm}(f;u^{-s}))= \text{Poles}(Z_{\beta,0}(f;u^{-s}))  \cap \{~ -\frac{\nu_i}{N_i} \mid  i \in J_{\mathbb{R}}^{\pm}~\}.$$
\end{cor}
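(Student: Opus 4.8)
The plan is to pass to the residue of $Z_{\beta,0}^{\pm}(f;u^{-s})$ at $s_0$ and show it does not vanish identically; the only substantial input will be Proposition \ref{prop 11}, the rest being bookkeeping with the contributions of \ref{nota 3}. Since $Z_{\beta,0}^{-}(f;u^{-s})=Z_{\beta,0}^{+}(-f;u^{-s})$ and the hypothesis is symmetric under $f\mapsto -f$, I will only treat the sign $+$. By Remark \ref{rem 10} every pole of $Z_{\beta,0}^{+}(f;u^{-s})$ lies in $\{-\nu_j/N_j\mid j\in J_{\mathbb{R}}^{+}\}$, so write $s_0=-\nu_i/N_i$ with $i$ the unique index furnished by the hypothesis. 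If $s_0$ is a pole of order $2$ there is nothing to prove, so by Proposition \ref{prop 5} I may assume it is at most a simple pole; its residue is then the finite sum $\sum_j\mathcal{R}_{\beta,j}^{+}(u)$ over all $j\in J_{\mathbb{R}}^{+}$ with $-\nu_j/N_j=s_0$ (up to an invertible factor, as in \ref{nota 3}). It thus suffices to show this sum is nonzero in $\bigcup_{k\geq 1}\mathbb{Z}[u^{1/k},u^{-1/k}]$.

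The next step is to collapse the sum to a single term. Let $j\neq i$ lie in $J_{\mathbb{R}}^{+}$ with $-\nu_j/N_j=s_0$. If $E_j$ were a component of the strict transform, then $\nu_j=1$, so $s_0=-1/N_j$ and $j$ would satisfy the hypothesis, contradicting its uniqueness; likewise $E_j$ cannot be exceptional with $(E_j\cdot\sum\limits_{\ell\neq j}E_\ell)\geq 3$. Hence every such $j$ is exceptional with $(E_j\cdot\sum\limits_{\ell\neq j}E_\ell)<3$, so $\mathcal{R}_{\beta,j}^{+}\equiv 0$ by Proposition \ref{prop 6}, and the residue of $Z_{\beta,0}^{+}(f;u^{-s})$ at $s_0$ equals $\mathcal{R}_{\beta,i}^{+}(u)$.

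It remains to prove $\mathcal{R}_{\beta,i}^{+}\not\equiv 0$. If $E_i$ is an exceptional curve with $(E_i\cdot\sum\limits_{j\neq i}E_j)\geq 3$, this is exactly Proposition \ref{prop 11}. If $E_i$ is a component of the strict transform with $s_0=-1/N_i$, then $\widetilde{E}_i^{0,+}\cap\sigma^{-1}(0)(\mathbb{R})=\emptyset$, since $\sigma^{-1}(0)$ is the union of the exceptional curves, and $E_i$ meets the exceptional divisor transversally at a single scheme point $p$ lying on some exceptional curve $E_1$; this $p$ is real, since otherwise its complex conjugate would be a second intersection point, whereas $E_i(\mathbb{R})\neq\emptyset$. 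A short local analysis of the sign of $f\circ\sigma$ near $p$, using $i\in J_{\mathbb{R}}^{+}$, shows $p\in\overline{P(f)}$, so $\beta(\widetilde{E}_{i,1}^{0,+}(\mathbb{R}))\in\{1,2\}$ and
$$\mathcal{R}_{\beta,i}^{+}(u)=\frac{\beta(\widetilde{E}_{i,1}^{0,+}(\mathbb{R}))}{N_i}\cdot\frac{u-1}{u^{\alpha_1}-1},\qquad \alpha_1=\nu_1-\frac{N_1}{N_i}.$$
This is not identically zero as long as $\alpha_1\neq 0$; and $\alpha_1=0$ would mean $-\nu_1/N_1=s_0$ with $E_1$ and $E_i$ meeting at the real point $p\in\overline{P(f)}$, contradicting the assumption that $s_0$ is at most a simple pole (Proposition \ref{prop 5}). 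Either way $s_0\in\text{Poles}(Z_{\beta,0}^{+}(f;u^{-s}))$, which proves the first assertion, and the same argument applies to $Z_{\beta,0}^{-}$.

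Finally, for the displayed equality the inclusion $\subseteq$ is the statement of Corollary \ref{coro 5} at the level of the virtual Poincaré polynomial. For $\supseteq$, take $s_0$ in the right-hand side: by Corollary \ref{cor 4} and Theorem \ref{thm 4}, $s_0$ is a pole of $Z_{top,0}$ witnessed by a component satisfying the numerical criterion, and under the standing assumption that there is at most one nonzero contribution per candidate pole this component is the unique index $i\in J_{\mathbb{R}}^{\pm}$ to which the first assertion applies; hence $s_0\in\text{Poles}(Z_{\beta,0}^{\pm}(f;u^{-s}))$. The step I expect to require the most care — and the reason the extra assumption is imposed — is precisely this last matching: without it, the component produced by Theorem \ref{thm 4} need not a priori sit in $J_{\mathbb{R}}^{\pm}$, and, more seriously, several nonzero contributions could cancel at the level of $Z_{\beta,0}^{\pm}$ (as in Example \ref{ex 6}), so one can no longer read off the poles one contribution at a time.
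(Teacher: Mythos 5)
Your proof is correct and follows the route the paper itself intends: the paper gives no separate argument for this corollary, deriving it from Proposition \ref{prop 11} for the exceptional case plus the Section \ref{section 2} machinery, and your reduction to a single contribution via the uniqueness hypothesis and Proposition \ref{prop 6} is exactly that. The only thin spot is the strict-transform case: you assume $E_i$ meets the exceptional divisor at a single real point, which is right for an analytically irreducible real branch (as in Definition \ref{def 5}), but if one reads ``irreducible component'' scheme-theoretically a component may carry several branches meeting several exceptional curves; the argument still goes through because Corollary \ref{cor 2} forces all the relevant $\alpha_j$ to have the same sign, so the terms $\beta(\widetilde{E}_{i,j}^{0,+}(\mathbb{R}))\frac{u-1}{u^{\alpha_j}-1}$ cannot cancel -- and your ``short local analysis'' showing some $\beta(\widetilde{E}_{i,j}^{0,+}(\mathbb{R}))\neq 0$ deserves to be written out, since it is precisely where $i\in J_{\mathbb{R}}^{+}$ is used (when $\gcd(N_i,N_j)$ is even the sign of the unit at $p$ agrees with the sign of $f\circ\sigma$ along $E_i^0(\mathbb{R})$ near $p$).
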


\begin{rem}\begin{enumerate}
    \item We do not known whether the above equality always holds, or whether cancellations between different nonzero contributions can occur.
    \item The data of the poles of naive zeta functions and zeta functions with signs alone is generally not sufficient to distinguish two germs that are not blow-Nash equivalent, even for curves. For example, if $\varepsilon \in \{ \pm1 \}$ and $f^{\varepsilon} = x^3  + \varepsilon y^4$, one finds that 
    $$\text{Poles}(Z_{top,0}^{\pm}(f^{\varepsilon};s)) = \text{Poles}(Z_{top,0}(f^{\varepsilon};s))  = \{ -1, -\frac{7}{12} \}$$
    although $f^+$ and $f^-$ are not blow-Nash equivalent, as one can check by looking at the Fukui invariants with signs of these two functions.

    \end{enumerate}
\end{rem}

\section{Poles and monodromy eigenvalues}
\label{section 4}
In connection with the monodromy conjecture, it is natural to seek an interpretation of the poles of these real zeta functions in terms of monodromy and its eigenvalues. We briefly present the objects appearing in the conjecture and refer to \cite{veys3} for a detailed introduction.\\
For now, let us take $f : \mathbb{C}^{d} \to \mathbb{C}$ a non-constant polynomial map sending $0$ to $0$, denote by $V \subset \mathbb{C}^{d}$ the hypersurface defined by $f$ and let $a \in V$.

\begin{prop}[\cite{milnor1}, \cite{veys3} Proposition 2.5]
\label{prop 8}
Denote $P_{a,i}(t)$ the characteristic polynomial of the monodromy $T^*$ acting on $H^i(\mathcal{F}_{f,a};\mathbb{C})$. Then 
    \begin{enumerate}
        \item All monodromy eigenvalues are roots of unity.
        \item if $f = f_1^{M_1} \dots f_r^{M_r}$ is the decomposition of $f$ in irreducible components and $m = \underset{a \in \{ f_j = 0 \}}{gcd} M_j$, then $P_{a,0}(t) = t^m -1$.
        \item When $a$ is an isolated critical point of $\{f= 0 \}$, then $H^i(\mathcal{F}_{f,a};\mathbb{C}) = 0$ for $i \neq 0,d-1$. Moreover, $H^{d-1}(\mathcal{F}_{f,a};\mathbb{C}) \neq 0$ and $P_{a,0}(t) = t-1$.
        \item When $a$ is a smooth point of $\{ f =0\}$, then $H^i(\mathcal{F}_{f,a};\mathbb{C}) = 0$ for $i > 0$ and $P_{a,0}(t) = t-1$.
    \end{enumerate}
\end{prop}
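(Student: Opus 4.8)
The plan is to establish the four items essentially independently: an embedded resolution of $f$ handles (1) and (2), while Milnor's local structure theory handles (3) and (4). All four are classical — each could simply be cited — so I will only describe the main line of argument. For (1), I would fix an embedded resolution $\pi\colon Y\to\mathbb{C}^{d}$ of $f$, in which $f\circ\pi$ is locally of the form $v\,y_{1}^{N_{1}}\cdots y_{d}^{N_{d}}$ with $v$ a unit, and take a finite good cover of a regular neighbourhood of $\pi^{-1}(a)$ by such charts, chosen compatibly with the geometric monodromy $T$ up to isotopy. On each chart the local Milnor fibre fibres over a piece of the exceptional locus with fibre on which $T$ acts periodically, of order dividing $\operatorname{lcm}$ of the relevant $N_{i}$; hence $T^{*}$ on the cohomology of every piece of the cover, and of every intersection of pieces, has only roots of unity as eigenvalues. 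Feeding this into the Mayer--Vietoris spectral sequence of the cover (carrying its $T^{*}$-action) forces $T^{*}$ on each $H^{i}(\mathcal{F}_{f,a};\mathbb{C})$ to have eigenvalues among the $\operatorname{lcm}(N_{j})$-th roots of unity. This quasi-unipotence step is the deepest point: a self-contained proof genuinely uses resolution of singularities together with the local monodromy computation and the gluing argument, and one may instead simply invoke the classical monodromy theorem of Landman and Grothendieck.

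For (2), observe that $\dim_{\mathbb{C}}H^{0}(\mathcal{F}_{f,a};\mathbb{C})$ is the number $c$ of connected components of $\mathcal{F}_{f,a}$. The total space $E=f^{-1}(D_{\delta}^{*})\cap B_{a,\varepsilon}$ of the Milnor fibration is homotopy equivalent to the mapping torus of $T$, and also to $B_{a,\varepsilon}\setminus V$, which is connected; hence $T$ permutes the $c$ components of $\mathcal{F}_{f,a}$ transitively, and since $\langle T\rangle$ is cyclic it does so by a single $c$-cycle, so that $P_{a,0}(t)=t^{c}-1$. To compute $c$, I would use that the infinite cyclic cover of $E$ induced by the fibration $f\colon E\to D_{\delta}^{*}$ is homotopy equivalent to $\mathcal{F}_{f,a}$, so $c$ equals the index in $\pi_{1}(\mathbb{C}^{*})=\mathbb{Z}$ of the image of $f_{*}\colon\pi_{1}(B_{a,\varepsilon}\setminus V)\to\mathbb{Z}$. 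Computing $H_{1}(B_{a,\varepsilon}\setminus V;\mathbb{Z})$ as the free abelian group on the meridians of the branches $\{f_{j}=0\}$ passing through $a$, on which $f_{*}$ acts by multiplication by $M_{j}$, this image is $m\mathbb{Z}$; hence $c=m$ and $P_{a,0}(t)=t^{m}-1$.

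For (4), a local analytic change of coordinates puts $f$ in the form $f=x_{1}$ near the smooth point $a$ of $V$, so $\mathcal{F}_{f,a}$ is a polydisc; being contractible, it has $H^{i}(\mathcal{F}_{f,a};\mathbb{C})=0$ for $i>0$ and $P_{a,0}(t)=t-1$. For (3), I would invoke Milnor's theorem \cite{milnor1} that when $a$ is an isolated singular point of $V$ the fibre $\mathcal{F}_{f,a}$ is $(d-2)$-connected and has the homotopy type of a finite $CW$-complex of dimension at most $d-1$, hence of a wedge of $\mu\ge 1$ copies of $S^{d-1}$; this gives $H^{i}(\mathcal{F}_{f,a};\mathbb{C})=0$ for $i\neq 0,d-1$, the non-vanishing of $H^{d-1}$, and $P_{a,0}(t)=t-1$ since $\mathcal{F}_{f,a}$ is connected (the case $m=1$ of (2)). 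Here the homotopy-type statement rests on nontrivial Morse theory on the Milnor fibre, so in the present context items (1) and (3) are best cited, while the elementary items (2) and (4) can be written out directly.
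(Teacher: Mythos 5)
The paper does not prove this proposition at all: it is stated purely as a citation of Milnor's book and of Proposition 2.5 in Veys's survey, so there is no in-paper argument to compare against. Your sketch is a correct account of the standard proofs: (1) is the quasi-unipotence (monodromy) theorem, which you rightly say should be cited rather than reproved; (3) and (4) follow from Milnor's bouquet theorem and the local normal form at a smooth point; and your computation of $P_{a,0}$ in (2) via the transitive cyclic permutation of the components of $\mathcal{F}_{f,a}$ and the index of the image of $f_*\colon H_1(B_{a,\varepsilon}\setminus V;\mathbb{Z})\to\mathbb{Z}$ is the standard argument. One small imprecision in (2): the meridian generators of $H_1(B_{a,\varepsilon}\setminus V;\mathbb{Z})$ correspond to the \emph{local analytic branches} of $V$ at $a$, not to the global irreducible factors $f_j$ (a single $f_j$ may have several branches through $a$); since every branch of $\{f_j=0\}$ has its meridian sent to $M_j$, the image of $f_*$ is still $m\mathbb{Z}$ and your conclusion stands, but the generating set should be described correctly.
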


 \begin{defi}
     The monodromy zeta function is defined by 
     $$\zeta_a(t) = \prod_{i\geq 0} P_{a,i}(t)^{(-1)^{i+1}}.$$
 \end{defi}

 \begin{thm}[\cite{acampo} Theorem 3]
 \label{thm 9}
     Let $\sigma : X \to \mathbb{C}^{d}$ be an embedded resolution of $f$. Then, using the usual notations, one has 
     $$\zeta_a(t) = \prod_{j \in J}(t^{N_j}-1)^{-\chi(E_j^0 \cap \sigma^{-1}(a))}$$
 \end{thm}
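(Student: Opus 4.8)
The plan is to follow A'Campo's original argument: package the monodromy zeta function in terms of the Lefschetz numbers of the iterates of a geometric monodromy, and then compute those numbers on the resolution by localising along the strata $E_j^0\cap\sigma^{-1}(a)$.

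First I would pass from $\zeta_a(t)$ to Lefschetz numbers. For $n\geq 1$ set $\Lambda(T^n)=\sum_{i\geq 0}(-1)^i\operatorname{tr}\bigl(T^{*n}\mid H^i(\mathcal F_{f,a};\mathbb{C})\bigr)$. Taking the logarithmic derivative of $\prod_i\det(\mathrm{id}-t\,T^*\mid H^i)^{(-1)^{i+1}}$ gives the classical identity
$$\prod_{i\geq 0}\det(\mathrm{id}-t\,T^*\mid H^i)^{(-1)^{i+1}}=\exp\Bigl(\sum_{n\geq 1}\Lambda(T^n)\,\tfrac{t^n}{n}\Bigr),$$
and since $\zeta_a(t)=\prod_i\det(t\cdot\mathrm{id}-T^*\mid H^i)^{(-1)^{i+1}}$ differs from the left-hand side only through the substitution $t\mapsto t^{-1}$ and multiplication by $t^{-\chi(\mathcal F_{f,a})}$, the whole statement reduces to proving
$$\Lambda(T^n)=\sum_{\substack{j\in J\\ N_j\mid n}}N_j\,\chi\bigl(E_j^0\cap\sigma^{-1}(a)\bigr)\qquad(n\geq 1),$$
together with its special case $\chi(\mathcal F_{f,a})=\sum_{j\in J}N_j\,\chi(E_j^0\cap\sigma^{-1}(a))$, obtained by taking for $n$ a common multiple of the orders of the monodromy eigenvalues (these are roots of unity by Proposition~\ref{prop 8}). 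Indeed, plugging the formula for $\Lambda(T^n)$ into the exponential and using $-\log(1-t^N)=\sum_{k\geq 1}t^{Nk}/k$, a short rearrangement produces exactly $\prod_j(t^{N_j}-1)^{-\chi(E_j^0\cap\sigma^{-1}(a))}$ once the power of $t$ is reconciled.

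To compute $\Lambda(T^n)$ I would use the resolution. Since $\sigma$ is proper and an isomorphism away from $V$, the Milnor fibre $\mathcal F_{f,a}$ retracts onto a neighbourhood of $\sigma^{-1}(a)$ inside $(f\circ\sigma)^{-1}(D_\delta^*)$, and one can choose the geometric monodromy $T$ so that, in local normal-crossing coordinates where $f\circ\sigma=u\,y_1^{N_1}\cdots y_m^{N_m}$ along a stratum $E_I^0$, it acts by a simultaneous rotation in the $y_i$. Such a $T$ has no fixed point on $\mathcal F_{f,a}$; over the part lying above a single component $E_j^0$ (so $m=1$) the power $T^n$ is the identity precisely when $N_j\mid n$, and that part is an $N_j$-fold cover of $E_j^0\cap\sigma^{-1}(a)$; over a stratum $E_I^0$ with $|I|\geq 2$ the local Milnor fibration carries a free circle action commuting with every $T^n$. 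Applying the Lefschetz fixed point theorem to $T^n$ and summing the localised contributions gives the claimed identity: single-component strata contribute $\chi$ of the relevant $N_j$-fold cover, i.e. $N_j\,\chi(E_j^0\cap\sigma^{-1}(a))$ when $N_j\mid n$ and $0$ otherwise, while every deeper stratum contributes $0$, a free $S^1$-action forcing all fibrewise Lefschetz numbers to vanish. (Equivalently one could argue with nearby cycles: by proper base change $H^*(\mathcal F_{f,a})=\mathbb{H}^*(\sigma^{-1}(a),\psi_{f\circ\sigma}\mathbb{C})$ monodromy-equivariantly, so $\zeta_a$ is multiplicative over the stratification $\sigma^{-1}(a)=\bigsqcup_I E_I^0\cap\sigma^{-1}(a)$ by additivity of Euler characteristics of constructible complexes, and one computes the stalk of $\psi_{f\circ\sigma}\mathbb{C}$ with its monodromy in the monomial model, obtaining $\mathbb{C}^{N_j}$ with a cyclic permutation when $m=1$ and trivial alternating characteristic polynomial when $m\geq 2$.)

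The step I expect to be the main obstacle is the vanishing along the strata with $|I|\geq 2$ together with the handling of non-compactness: $\mathcal F_{f,a}$ is a manifold with boundary and the pieces lying over $E_j^0\cap\sigma^{-1}(a)$ are themselves non-compact, so one must either work systematically with compactly supported Euler characteristics, or, as A'Campo does, fix once and for all a resolution-compatible retraction of $\mathcal F_{f,a}$ and a semialgebraic triangulation adapted to it before invoking the fixed point theorem. The remaining pieces — the substitution $t\mapsto t^{-1}$, the signs $(-1)^{i+1}$, and the extraction of the correct power of $t$ — are routine bookkeeping.
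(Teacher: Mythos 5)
This theorem is quoted in the paper as an external result (A'Campo, \emph{La fonction z\^eta d'une monodromie}, Theorem 3) and is not proved there, so there is no internal proof to compare against. Your sketch is a faithful reconstruction of A'Campo's original argument: reduce to the Lefschetz numbers $\Lambda(T^n)$ via the exponential generating identity, compute them on the resolution by localising along the strata, with the $N_j$-fold covers over the one-component strata giving $N_j\,\chi(E_j^0\cap\sigma^{-1}(a))$ when $N_j\mid n$ and the free circle action killing the deeper strata; the convention bookkeeping ($t\mapsto t^{-1}$ and the factor $t^{-\chi(\mathcal F_{f,a})}$ against $\chi(\mathcal F_{f,a})=\sum_j N_j\chi(E_j^0\cap\sigma^{-1}(a))$) does cancel exactly to yield $\prod_j(t^{N_j}-1)^{-\chi(E_j^0\cap\sigma^{-1}(a))}$. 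The outline is correct; the only steps left implicit are precisely the ones you flag, namely the construction of a resolution-adapted geometric monodromy and the version of the Lefschetz fixed point theorem valid for the resulting non-isolated, non-compact fixed loci.
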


 Suppose now that $f \in \mathbb{C}[x,y]$, so that $H^*(\mathcal{F}_{f,a};\mathbb{C}) = H^0(\mathcal{F}_{f,a};\mathbb{C}) \bigoplus H^1(\mathcal{F}_{f,a};\mathbb{C})$ and $\zeta_a(t) = \frac{P_{a,1}(t)}{P_{a,0}(t)}$. The monodromy conjecture, which has been proved in the case of curves, then corresponds to the following theorem.

 \begin{thm}
    \label{thm 8}
     Let $s_0$ be a pole of $Z_{top,0}(f;s)$. Then $e^{2i\pi s_0}$ is an eigenvalue of the monodromy operator $T^* : H^*(\mathcal{F}_{f,a};\mathbb{C}) \to H^*(\mathcal{F}_{f,a};\mathbb{C})$ for some $a \in V$ close to the origin.
 \end{thm}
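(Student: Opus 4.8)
The plan is to feed the combinatorial classification of poles provided by Veys' Theorem~\ref{thm 5} into A'Campo's formula for the monodromy zeta function (Theorem~\ref{thm 9}), using the elementary structure results of Proposition~\ref{prop 8} to translate the resulting divisibility statements about roots of unity into statements about eigenvalues of $T^*$. So fix a pole $s_0$ of $Z_{top,0}(f;s)$. By Theorem~\ref{thm 5} there are two cases: either $s_0 = -1/N_i$ for an irreducible component $E_i$ of the strict transform of $f$, or $s_0 = -\nu_i/N_i$ for an exceptional curve $E_i$ with $(E_i\cdot\sum_{j\neq i}E_j)\geq 3$. In each case one must exhibit a point $a\in V$ arbitrarily close to $0$ at which $e^{2i\pi s_0}$ is an eigenvalue of the monodromy; I treat the two cases in turn.

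In the first case, since $E_i$ contributes to the \emph{local} zeta function it meets $\sigma^{-1}(0)$, so $E_i$ is the strict transform of an analytically irreducible branch $C$ of $\{f=0\}$ through the origin, and $N_i$ is the multiplicity with which $C$ occurs in $f$. I would take for $a$ a point of $C$ close to $0$ that is a smooth point of $C$ and lies on no other branch of $\{f=0\}$; such points accumulate at $0$. Near $a$ the function $f$ has the local form $f = v\cdot g^{N_i}$ with $v$ a unit and $g$ a local coordinate, so $\mathcal{F}_{f,a}$ is a disjoint union of $N_i$ contractible pieces cyclically permuted by the monodromy; by Proposition~\ref{prop 8}(2) this gives $P_{a,0}(t) = t^{N_i}-1$, which has $e^{-2i\pi/N_i} = e^{2i\pi s_0}$ as a root. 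Hence $e^{2i\pi s_0}$ is an eigenvalue of $T^*$ on $H^0(\mathcal{F}_{f,a};\mathbb{C})$.

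In the second case take $a = 0$: this is legitimate because $\sigma$ is a composition of point blow-ups over the origin, so $\sigma(E_i) = \{0\}$. Let $s_j$ denote the number of components of $\sigma^{-1}(\{f=0\})$ meeting $E_j$. For $E_j$ exceptional one has $E_j^0\cap\sigma^{-1}(0) = E_j\setminus\{s_j\text{ points}\}$, so $\chi(E_j^0\cap\sigma^{-1}(0)) = 2-s_j$, whereas $E_j^0\cap\sigma^{-1}(0) = \emptyset$ when $E_j$ lies in the strict transform; thus Theorem~\ref{thm 9} gives $\zeta_0(t) = \prod_{E_j\text{ exc.}}(t^{N_j}-1)^{\,s_j-2}$. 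On the other hand $\zeta_0(t) = P_{0,1}(t)/P_{0,0}(t)$ with $P_{0,0}(t) = t^m-1$ by Proposition~\ref{prop 8}(2), $m$ being the gcd of the multiplicities of the branches of $f$ at $0$. Write $\xi = e^{2i\pi s_0}$, a primitive $d$-th root of unity with $d = N_i/\gcd(\nu_i,N_i)$, so that $d\mid N_i$. If $d\mid m$, then $\xi$ is a root of $P_{0,0}$, hence an eigenvalue of $T^*$ on $H^0(\mathcal{F}_{f,0};\mathbb{C})$, and we are done. If $d\nmid m$, then $\xi$ is not a root of $P_{0,0}$, so the order of $\xi$ as a zero of the rational function $\zeta_0$ coincides with its multiplicity as a root of $P_{0,1}$, and by the product formula above this order equals $\sum_{j\,:\,d\mid N_j}(s_j-2)$; it therefore suffices to show this integer is $\geq 1$.

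That last inequality is the heart of the matter and the step I expect to be the main obstacle. It is a purely combinatorial assertion about the tree-shaped dual graph of the canonical resolution: the subgraph $G_d$ spanned by the vertices $j$ with $d\mid N_j$ contains the rupture vertex $i$ (for which $s_i-2\geq 1$), and one must show that the remaining vertices of $G_d$, each contributing $s_j-2$, cannot drag the total below $1$ unless $G_d$ reaches a strict-transform vertex and thereby forces $d\mid m$. I would establish this exactly as in Loeser's proof of the curve case of the monodromy conjecture \cite{loes3}: a leaf-pruning argument on $G_d$ together with the description of the multiplicities $N_j$ along chains of the resolution graph, in the same spirit as the elimination of false candidate poles carried out in Section~\ref{section 2}. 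Granting this lemma, the two cases above complete the proof.
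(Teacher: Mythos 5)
Your proposal is correct and follows essentially the same route as the paper, whose own proof is just a pointer to \cite{veys3}: Veys' criterion (Theorem \ref{thm 5}) to classify the poles, Proposition \ref{prop 8}(2) at a nearby smooth point of the relevant branch for the strict-transform case, and A'Campo's formula (Theorem \ref{thm 9}) at the origin for the rupture-component case. The combinatorial lemma you defer to \cite{loes3} (that $\sum_{d\mid N_j}(s_j-2)\geq 1$ when $d\nmid m$) is precisely the content the paper likewise leaves to the cited references, so your outline matches the intended argument.
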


 In the above theorem, “close to the origin” means that the origin belongs to the Zariski closure of the set of points $a$ such that $e^{2i\pi s_0}$ is an eigenvalue of $T^* : H^*(\mathcal{F}_{f,a};\mathbb{C}) \to H^*(\mathcal{F}_{f,a};\mathbb{C})$.

\begin{rem}
It is necessary to consider eigenvalues of the monodromy acting on $H^*(\mathcal{F}_{f,a};\mathbb{C})$ for points $a$ that are not necessarily the origin but lie sufficiently close to it. For example, if $f=x^3y^4$, one computes 
$$Z_{top,0}(f;s) =\frac{1}{(1+3s)(1+4s)}.$$ 
However, by Proposition \ref{prop 8} and Theorem $\ref{thm 9}$ one has $P_{0,0}(t) = P_{0,1}(t) = t -1$, so that $e^{-\frac{2i\pi}{3}}$ and $e^{-\frac{2i\pi}{4}}$ are not eigenvalues of the monodromy $T^* :  H^*(\mathcal{F}_{f,O};\mathbb{C}) \to H^*(\mathcal{F}_{f,O};\mathbb{C})$. By contrast, for $a\in \{x = 0\} \setminus \{y=0 \}$, one finds $P_{a,0}(t)=t^3-1$, while for $a \in \{y = 0\} \setminus \{x=0 \}$, one finds $P_{a,0}(t)=t^4-1$.
\end{rem}

\begin{proof}[Proof of theorem \ref{thm 8}]
   One possible proof is the one presented in \cite{veys3}, which uses Veys' criterion (Theorem~\ref{thm 5}) as well as A’Campo’s formula (Theorem~\ref{thm 9}). Moreover, this proof shows that if the pole $s_0 = -\frac{\nu_i}{N_i}$ is induced by an exceptional curve, then $e^{-\frac{2i\pi \nu_i}{N_i}}$ is an eigenvalue of the monodromy $ T^* : H^*(\mathcal{F}_{f,O};\mathbb{C}) \to H^*(\mathcal{F}_{f,O};\mathbb{C})$ acting on the Milnor fiber of $f$ at the origin. If $s_0 = -\frac{1}{N_i}$ is induced by a component $E_i$ of the strict transform, then, as in the remark above, one shows that $e^{-\frac{2 i \pi}{N_i}}$ is an eigenvalue of the monodromy $T^* : H^0(\mathcal{F}_{f,a};\mathbb{C}) \to H^0(\mathcal{F}_{f,a};\mathbb{C})$ in degree $0$ for a suitably chosen point $a$ i.e. for a point $a$ lying exclusively on the branch $\{ f_i = 0 \}$ that induces the component $E_i$ of the strict transform.
\end{proof}

Let us now consider $f\in \mathbb{R}[x,y]$ vanishing at the origin, $\sigma :(X,\sigma^{-1}(0)) \to (\mathbb{A}^2_{\mathbb{R}},0)$ the canonical embedded resolution of $f$ and $V \subset \mathbb{A}^2_{\mathbb{R}}$ the curve defined by $f$. We have seen that

$$ \text{Poles}(Z_{top,0}(f;s)) \subset \text{Poles}(Z_{top,0}(f_{\mathbb{C}};s)) \cap  \{~ -\frac{\nu_i}{N_i} \mid  i \in J_{\mathbb{R}}~\}$$

and also that
$$\text{Poles}(Z_{top,0}^{\pm}(f;s)) \subset \text{Poles}(Z_{top,0}(f_{\mathbb{C}};s)) \cap  \{~ -\frac{\nu_i}{N_i} \mid  i \in J_{\mathbb{R}}^{\pm}~\}.$$

It is therefore natural to try to translate these inclusions in terms of the eigenvalues of the monodromy. In other words, one seeks a subset, say $E$, of the set of eigenvalues of the monodromy $T ^* : H^*(\mathcal{F}_{f,a};\mathbb{C}) \to H^*(\mathcal{F}_{f,a};\mathbb{C})$ for $a\in V(\mathbb{C})$ close to the origin, such that every pole $s_0$ of $Z_{top,0}(f;s)$ induces an eigenvalue $e^{2i \pi s_0} \in E$, and similarly for the poles of zeta functions with signs.

\begin{prop}
    \label{prop 9}
    Let $s_0$ be a pole of $Z_{top,0}(f;s)$. Then $e^{2i \pi s_0}$ is an eigenvalue of the monodromy $T^* : H^*(\mathcal{F}_{f,a};\mathbb{C}) \to H^*(\mathcal{F}_{f,a};\mathbb{C})$ for some $a \in V(\mathbb{R})$ close to the origin.
\end{prop}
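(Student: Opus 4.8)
The plan is to combine the classification of the poles of $Z_{top,0}(f;s)$ given in Theorem~\ref{thm 4} with the complex monodromy results of Section~\ref{section 4}, using the inclusion $\text{Poles}(Z_{top,0}(f;s))\subset\text{Poles}(Z_{top,0}(f_{\mathbb C};s))$ together with the fact that the canonical embedded resolution of a plane curve — successive blow-ups of singular closed points and their infinitely near points — is compatible with the base change $\mathbb R\subset\mathbb C$. By Theorem~\ref{thm 4} a pole $s_0$ of $Z_{top,0}(f;s)$ falls into one of two cases: either (a) $s_0=-\nu_i/N_i$ for an exceptional curve $E_i$ with $E_i(\mathbb R)\ne\emptyset$ and $(E_i\cdot\sum_{j\ne i}E_j)\ge 3$, or (b) $s_0=-1/N_i$ for a component $E_i$ of the strict transform with $E_i(\mathbb R)\ne\emptyset$. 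I will exhibit the required real point $a$ in each case.

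\emph{Case (a).} An exceptional curve with a real point necessarily arises from blowing up a closed point with residue field $\mathbb R$, so $E_i\simeq\mathbb P^1_{\mathbb R}$ is absolutely irreducible and $E_i\otimes_{\mathbb R}\mathbb C$ is an exceptional curve of the canonical embedded resolution of $f_{\mathbb C}$; moreover a complex intersection point of $E_i$ with the rest of the total transform contributes $2$ to the schematic intersection number and splits into two points over $\mathbb C$, so $(E_i\otimes\mathbb C)\cdot\sum_{j\ne i}(E_j\otimes\mathbb C)\ge 3$ as well. Hence $s_0$ is, in the canonical resolution of $f_{\mathbb C}$, a pole induced by an exceptional curve, and the refinement of Theorem~\ref{thm 8} contained in its proof — which rests on Veys' criterion (Theorem~\ref{thm 5}) and A'Campo's formula (Theorem~\ref{thm 9}) together with the combinatorics of the total dual graph — gives that $e^{2i\pi s_0}=e^{-2i\pi\nu_i/N_i}$ is an eigenvalue of $T^{*}$ on $H^{*}(\mathcal F_{f,O};\mathbb C)$. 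Since $O\in V(\mathbb R)$, the case is done with $a=O$.

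\emph{Case (b).} Here $\nu_i=1$ and $s_0=-1/N_i$. First, $E_i$ is absolutely irreducible: otherwise $E_i\otimes\mathbb C$ would be a pair of complex conjugate smooth curves, disjoint because the resolution is normal crossing (two conjugate components meeting at a point would, with the exceptional curve through that point, violate transversality), forcing $E_i(\mathbb R)=\emptyset$. Thus $E_i(\mathbb R)$ is a nonempty smooth $1$-manifold, hence an infinite — and so Zariski dense — subset of $E_i$; and since $\sigma$ maps $E_i$ onto the irreducible curve $\{f_i=0\}$ (the zero locus of the $\mathbb R$-irreducible factor of $f$ of multiplicity $N_i$ corresponding to $E_i$), the set $\sigma(E_i(\mathbb R))$ is Zariski dense in $\{f_i=0\}$, which passes through $O$. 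Removing the finitely many real points of $E_i$ lying over $\sigma^{-1}(O)$, over the singular locus of $\{f_i=0\}$, or over the other branches of $V$, we still obtain a Zariski dense set of points $a\in V(\mathbb R)$ which are smooth points of $\{f_i=0\}$ lying on no other branch; near such an $a$ one has $f=f_i^{N_i}\cdot(\text{unit})$ with $f_i$ smooth, so by Proposition~\ref{prop 8}(2) the characteristic polynomial of $T^{*}$ on $H^{0}(\mathcal F_{f,a};\mathbb C)$ equals $t^{N_i}-1$, whose roots include $e^{2i\pi s_0}=e^{-2i\pi/N_i}$. As these $a$ are Zariski dense in $\{f_i=0\}\ni O$, the origin lies in the Zariski closure of the set of points at which $e^{2i\pi s_0}$ is a monodromy eigenvalue, which is exactly the meaning of ``$a$ close to the origin''.

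\emph{Main obstacle.} Contrary to the complex situation, one cannot expect $a$ to be close to $O$ for the Euclidean topology — the real branch $\{f_i=0\}$ may touch a Euclidean neighbourhood of the origin only at $O$ (e.g. for $f=(x^2+y^2-x^3)^k$) — so ``close to the origin'' must be read in the Zariski-closure sense, and the technical heart of Case (b) is the absolute-irreducibility observation guaranteeing that the real locus of $\{f_i=0\}$ stays Zariski dense. (Dropping $E_i(\mathbb R)\ne\emptyset$, say for a factor $x^2+y^2$ of $f$, makes that locus finite and the argument collapses, but then $-1/N_i$ is not a pole of $Z_{top,0}(f;s)$ by Theorem~\ref{thm 4}.) In Case (a) the corresponding point to check is that Veys' graph-theoretic propagation of the eigenvalue back to the Milnor fibre at $O$ survives replacing the canonical complex resolution by the base change of the canonical real one; this holds because both are normal crossing with the same numerical data along geometric components, and the argument only sees the dual graph.
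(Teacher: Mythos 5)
Your proof is correct and follows essentially the same route as the paper, which simply reduces Proposition \ref{prop 9} to the argument given for Proposition \ref{prop 10}: for a pole coming from an exceptional curve one invokes the refinement in the proof of Theorem \ref{thm 8} to land the eigenvalue at $a=O\in V(\mathbb{R})$, and for a pole $-1/N_i$ coming from a strict-transform component with $E_i(\mathbb{R})\neq\emptyset$ one picks a real point of $\{f_i=0\}$ away from the origin and the other branches and applies Proposition \ref{prop 8}. Your additional remarks (absolute irreducibility of $E_i$, Zariski density of $\sigma(E_i(\mathbb{R}))$, compatibility of the canonical resolution with base change) are correct elaborations of points the paper leaves implicit.
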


\begin{proof}
    The proof is similar to that of Proposition \ref{prop 10} below.
\end{proof}

Let us now denote 
$$V_+(\mathbb{R}) = V (\mathbb{R}) \cap \overline{\{(x,y) \in \mathbb{R}^2 \mid f(x,y) > 0 \}} \text{ and }V_{-}(\mathbb{R}) = V(\mathbb{R}) \cap \overline{\{(x,y) \in \mathbb{R}^2 \mid f(x,y) < 0 \}}.$$ 

\begin{prop}
\label{prop 10}
    Let $s_0$ be a pole of $Z_{top,0}^{\pm}(f;s)$. Then $e^{2i \pi s_0}$ is an eigenvalue of the monodromy $T^* : H^*(\mathcal{F}_{f,a};\mathbb{C}) \to H^*(\mathcal{F}_{f,a};\mathbb{C})$ for some $a \in V_{\pm}(\mathbb{R})$ close to the origin.
\end{prop}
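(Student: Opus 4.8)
The plan is to reduce, via Corollary~\ref{coro 5} and Theorem~\ref{thm 4}, to two cases for the pole $s_0$, and in each case to exhibit an explicit point $a$. I treat $Z_{top,0}^{+}(f;s)$; the statement for $Z_{top,0}^{-}(f;s)$ follows by replacing $f$ with $-f$, since $Z_{top,0}^{-}(f;s)=Z_{top,0}^{+}(-f;s)$ and $V_{-}(\mathbb{R})$ for $f$ equals $V_{+}(\mathbb{R})$ for $-f$. By Corollary~\ref{coro 5} together with Theorem~\ref{thm 4}, a pole $s_0$ of $Z_{top,0}^{+}(f;s)$ is of the form $s_0=-\nu_i/N_i$ for some $i\in J_{\mathbb{R}}^{+}$ which is either (a) an exceptional curve $E_i$ with $(E_i\cdot\sum_{j\ne i}E_j)\ge 3$, or (b) an irreducible component of the strict transform of $f$, in which case $\nu_i=1$ and $s_0=-1/N_i$.

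In case (b) the plan is to take $a$ a generic real point of the branch of $\{f=0\}$ carried by $E_i$, chosen inside $\overline{\{f>0\}}$. Since $i\in J_{\mathbb{R}}^{+}$, one can select a connected component $\Gamma$ of $E_i^0(\mathbb{R})$ near which $f\circ\sigma$ takes positive values: this is a short check on the parity of $N_i$ (if $N_i$ is odd then $f\circ\sigma$ changes sign across $E_i$ and any such $\Gamma$ works; if $N_i$ is even the defining condition of $J_{\mathbb{R}}^{+}$ yields a $\Gamma$ on which the local unit $u$ in $f\circ\sigma=u\,y_1^{N_i}$ is positive). For a generic $a'\in\Gamma$ one has $a'\notin\sigma^{-1}(0)$ (indeed $E_i^0\cap\sigma^{-1}(0)=\emptyset$, as $E_i$ is not exceptional), so $\sigma$ is an isomorphism near $a'$; setting $a=\sigma(a')$ gives $a\in V(\mathbb{R})\cap\overline{\{f>0\}}=V_{+}(\mathbb{R})$, with $a$ lying only on $\{f_i=0\}$ and $f=u\,f_i^{N_i}$ near $a$, $u(a)\ne 0$, $f_i$ smooth at $a$. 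Then $\mathcal{F}_{f,a}$ is a disjoint union of $N_i$ contractible pieces cyclically permuted by $T$ (the computation underlying Proposition~\ref{prop 8}(2)), so $T^{*}$ on $H^{0}(\mathcal{F}_{f,a};\mathbb{C})\cong\mathbb{C}^{N_i}$ has all $N_i$-th roots of unity as eigenvalues, in particular $e^{2i\pi s_0}=e^{-2i\pi/N_i}$. Finally the admissible $a'$ are Zariski-dense in $E_i$, hence the points $a=\sigma(a')$ are Zariski-dense in the branch, whose closure contains $0$; thus $a$ is close to the origin in the required sense.

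In case (a) the plan is to take $a=0$ and borrow the complex result. One first checks $0\in V_{+}(\mathbb{R})$: since $E_i$ is exceptional, $\sigma(E_i)=\{0\}$, and $i\in J_{\mathbb{R}}^{+}$ provides a real point $p\in E_i(\mathbb{R})\cap\overline{P(f)}$; as $\sigma$ is continuous, proper (hence closed) and surjective on real points, $\sigma(\overline{P(f)})=\overline{\sigma(P(f))}=\overline{\{f>0\}}$, which therefore contains $\sigma(p)=0$. Next one passes to the complexification: $E_{i,\mathbb{C}}$ is an irreducible exceptional curve of the canonical embedded resolution of $f_{\mathbb{C}}$ (the complexification of the canonical resolution of $f$), with the same numerical data $(\nu_i,N_i)$ and, by the schematic viewpoint of Remark~\ref{rem 3}, with $(E_{i,\mathbb{C}}\cdot\sum_{j\ne i}E_{j,\mathbb{C}})=(E_i\cdot\sum_{j\ne i}E_j)\ge 3$. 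Hence, by the proof of Theorem~\ref{thm 8} (Veys' argument via A'Campo's formula~\ref{thm 9}), $e^{-2i\pi\nu_i/N_i}=e^{2i\pi s_0}$ is an eigenvalue of $T^{*}$ on the cohomology of the Milnor fibre of $f_{\mathbb{C}}$ at $0$, i.e. on $H^{*}(\mathcal{F}_{f,0};\mathbb{C})$. Combined with $0\in V_{+}(\mathbb{R})$, this gives the claim; Proposition~\ref{prop 9} follows along the same lines, omitting the sign bookkeeping and using $a\in V(\mathbb{R})$ directly.

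The main difficulty is not the monodromy input — essentially the known complex curve case in (a) and an elementary Milnor fibre computation in (b) — but the requirement that the chosen point $a$ lie in $V_{\pm}(\mathbb{R})$, the closure of the positive (resp.\ negative) locus of $f$, rather than merely in $V(\mathbb{R})$ or $V(\mathbb{C})$. This is exactly what the condition $i\in J_{\mathbb{R}}^{\pm}$ is meant to supply, but turning it into a usable point $a$ requires the parity analysis of $f\circ\sigma$ around $E_i$ in case (b) and the properness argument comparing the positive locus upstairs and downstairs in case (a).
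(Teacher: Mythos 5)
Your proof is correct and follows essentially the same route as the paper: for an exceptional curve in $J_{\mathbb{R}}^{+}$ you take $a=0$, check $0\in V_{+}(\mathbb{R})$ and invoke the complex curve case (Theorem \ref{thm 8}), while for a strict transform component you pick a nearby real point of the corresponding branch inside $\overline{\{f>0\}}$ and read off the eigenvalue from the degree-zero monodromy. Your extra care (properness of $\sigma$, the parity analysis, the direct Milnor fibre computation in place of Proposition \ref{prop 8}(2)) only makes explicit what the paper leaves implicit.
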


\begin{proof}
     By symmetry, we only consider the case of the positive zeta function and thus let $s_0$ be a pole of $Z_{top,0}^{+}(f;s)$. Write the decomposition into irreducibles $f=f_1^{M_1} \dots f_r^{M_r}$ and distinguish two cases. First, suppose that $s_0=-\frac{\nu_i}{N_i} $ for a certain exceptional curve $E_i$ such that $E_i(\mathbb{R}) \cap\overline{P(f)} \neq \emptyset$ and satisfying $(E_i \cdot \sum\limits_{j \neq i } E_j) \geq 3$. Then, by Theorem \ref{thm 8}, one knows that $e^{-\frac{2 i \pi \nu_i}{N_i}}$ is an eigenvalue of the monodromy $T^ * : H^*(\mathcal{F}_{f,0};\mathbb{C}) \to H^*(\mathcal{F}_{f,0};\mathbb{C})$. Moreover $E_i(\mathbb{R}) \cap \overline{P(f)} \neq \emptyset$ and $E_i(\mathbb{R})$ is contained in the real locus of $\sigma^{-1}(0)$, so $0 \in V_{+}(\mathbb{R})$. Now suppose that $s_0 = -\frac{1}{M_i}$ is induced by the strict transform $E_i$ of some $f_i$ such that $E_i(\mathbb{R}) \cap \overline{P(f)} \neq \emptyset$. Since $\sigma$ is an isomorphism outside the origin, it follows that the real locus of $f_i$
     is not reduced to the origin and that
     $$\{(x,y) \in \mathbb{R}^2 ~ |~ f_i(x,y)=0\}\cap \overline{\{(x,y) \in \mathbb{R}^2 ~ |~ f(x,y) > 0 \}} \neq \emptyset.$$
     One can therefore consider a point $a$ in the above set distinct from the origin. In particular, $a \in V_{+}(\mathbb{R})$ and $a \in \{ f_i = 0 \} \setminus \underset{j \neq i}{\cup} \{ f_j = 0 \}$, and by Proposition \ref{prop 8} one has $P_{a,0}(t)= t^{M_i}-1$ which completes the proof.
\end{proof}

\selectlanguage{english}

\bibliographystyle{amsalpha}
\bibliography{References.bib}

\bigskip

%%%%%%%%%%%%%%%%%%%%%%%%%%%%%%%%%%%%%%%%%%%%%%%%%%%%%%%%%%%%%%%%%%%

\end{document}